\documentclass{article}

     \PassOptionsToPackage{numbers, compress}{natbib}


\usepackage[preprint]{neurips_2021}



\usepackage[utf8]{inputenc} 
\usepackage[T1]{fontenc}    

\usepackage{url}            
\usepackage{booktabs}       
\usepackage{amsfonts}       
\usepackage{nicefrac}       
\usepackage{microtype}      
\usepackage{comment}

\usepackage{amsmath}
\usepackage{amssymb}
\usepackage{bbm}
\usepackage{amsthm}
\usepackage{multirow}
\usepackage{todonotes}
\usepackage{enumitem}

\usepackage[symbol]{footmisc}

\def\sD{\mathcal{D}}
\def\sH{\mathcal{H}}
\def\sV{\mathcal{V}}
\def\sX{\mathcal{X}}
\def\sT{\mathcal{T}}

\def\sR{\mathcal{R}}
\def\sP{\mathcal{P}}
\def\sQ{\mathcal{Q}}
\def\sF{\mathcal{F}}
\def\sB{\mathcal{B}}
\def\sW{\mathcal{W}}
\def\sL{\mathcal{L}}

\def\tsT{\widetilde{\mathcal{T}}}
\def\tsH{\widetilde{\mathcal{H}}}
\def\tsP{\widetilde{\sP}}
\def\tsX{\widetilde{\sX}}

\def\ot{\otimes}

\def\bv{\mathbf{v}}
\def\be{\mathbf{e}}

\def\tp{\widetilde{p}}
\def\tq{\widetilde{q}}
\def\tw{\widetilde{w}}

\def\tP{\widetilde{P}}

\def\th{\widetilde{h}}
\def\tX{\widetilde{X}}
\def\tW{\widetilde{W}}
\def\tf{\widetilde{f}}
\def\be{\mathbf{e}}
\def\fQ{\mathfrak{Q}}

\def\hp{\hat{p}}
\def\kmax{k_{\text{max}}}
\def\bmax{b_{\text{max}}}
\def\1{\mathbbm{1}}
\newcommand{\rn}{\mathbb{R}}
\def\nn{\mathbb{N}}

\def\E{\mathbb{E}}

\def\conv{\operatorname{Conv}}
\def\mix{\operatorname{-mix}}
\def\lip{\operatorname{Lip}}

\def\simiid{\overset{iid}{\sim}}
\def\spn{\operatorname{span}}
\def\lip{\operatorname{Lip}}
\def\proj{\operatorname{Proj}}
\def\cip{\,{\buildrel p \over \rightarrow}\,}
\def\area{\operatorname{area}}

\DeclareMathOperator{\argmin}{argmin}

\def\l{\left}
\def\r{\right}

\newtheorem{lem}{Lemma}[section]
\newtheorem{thm}{Theorem}[section]
\newtheorem{cor}{Corollary}[section]
\newtheorem{prop}{Proposition}[section]


    \def\sD{\mathcal{D}}
    \def\sH{\mathcal{H}}
    \def\sV{\mathcal{V}}
    \def\sX{\mathcal{X}}
    \def\sT{\mathcal{T}}
    
    \def\sR{\mathcal{R}}
    \def\sP{\mathcal{P}}
    \def\sQ{\mathcal{Q}}
    \def\sF{\mathcal{F}}
    \def\sB{\mathcal{B}}
    \def\sW{\mathcal{W}}
    \def\sL{\mathcal{L}}
    \def\sG{\mathcal{G}}

    \def\tsT{\widetilde{\mathcal{T}}}
    \def\tsH{\widetilde{\mathcal{H}}}
    \def\tsP{\widetilde{\sP}}
    \def\tsX{\widetilde{\sX}}
    
    \def\tsG{\widetilde{\sG}}

    \def\ot{\otimes}

    \def\bv{\mathbf{v}}
    \def\be{\mathbf{e}}

    \def\tp{\widetilde{p}}
    \def\tq{\widetilde{q}}
    \def\tw{\widetilde{w}}
    
    \def\tP{\widetilde{P}}
    
    \def\th{\widetilde{h}}
    \def\tX{\widetilde{X}}
    \def\tW{\widetilde{W}}
    \def\tf{\widetilde{f}}
    \def\fL{\mathfrak{L}}

    \def\E{\mathbb{E}}

    \def\conv{\operatorname{Conv}}
    \def\mix{\operatorname{-mix}}
    \def\lip{\operatorname{Lip}}

    \def\simiid{\overset{iid}{\sim}}
    \def\spn{\operatorname{span}}
    \def\lip{\operatorname{Lip}}
    \def\proj{\operatorname{Proj}}
    \def\cip{\,{\buildrel p \over \rightarrow}\,}
    \def\area{\operatorname{area}}

    \DeclareMathOperator{\emi}{\theta}
    \DeclareMathOperator{\why}{\zeta}

\newtheorem*{remark*}{Remark}

\title{Beyond Smoothness: Incorporating Low-Rank Analysis into Nonparametric Density Estimation}

%

\author{
  Robert A. Vandermeulen\\
  Machine Learning Group\\
 Technische Universi\"at Berlin\\
 Berlin, Germany\\
 \texttt{vandermeulen@tu-berlin.de} \\
  \And
  Antoine Ledent \\
  Machine Learning Group \\
  Technische Universit\"at Kaiserslautern \\
  Kaiserslautern, Germany\\
  \texttt{ledent@cs.uni-kl.de} \\
  }
\begin{document}

\maketitle

\begin{abstract}
  The construction and theoretical analysis of the most popular universally consistent nonparametric density estimators hinge on one functional property: smoothness. In this paper we investigate the theoretical implications of incorporating a multi-view latent variable model, a type of low-rank model, into nonparametric density estimation. To do this we perform extensive analysis on histogram-style estimators that integrate a multi-view model. Our analysis culminates in showing that there exists a universally consistent histogram-style estimator that converges to any multi-view model with a finite number of Lipschitz continuous components at a rate of $\widetilde{O}(1/\sqrt[3]{n})$ in $L^1$ error. In contrast, the standard histogram estimator can converge at a rate slower than $1/\sqrt[d]{n}$ on the same class of densities. We also introduce a new nonparametric latent variable model based on the Tucker decomposition. A rudimentary implementation of our estimators experimentally demonstrates a considerable performance improvement over the standard histogram estimator. We also provide a thorough analysis of the sample complexity of our Tucker decomposition-based model and a variety of other results. Thus, our paper provides solid theoretical foundations for extending low-rank techniques to the nonparametric setting.
\end{abstract}

\section{Introduction}\label{sec:introduction}
Nonparametric density estimators are density estimators capable of estimating a density $p$ while making few to no assumptions on $p$. Two commonly used nonparametric density estimators include the histogram estimator and the kernel density estimator (KDE). A common characteristic of these two estimators is that they make estimation tractable via a hyperparameter that relates to smoothness, namely bin width and bandwidth. Large bin width or bandwidth allows for good control of estimation error\footnote{For an estimator $V$ restricted space a of densities $\sP$, the \emph{estimation error} refers to the difference between $\left\|V-p\right\|$ and $\min_{q\in \sP} \left\|p - q\right\|$, where $p$ is the target density. This is similar to estimator variance.} at the cost of increased approximation error via large bin volume for the histogram and smoothing for KDEs. Selection of this parameter is crucial for estimator performance. In fact a recent survey on bandwidth selection for KDEs found at least 30 methods for setting this value along with a few surveys dedicated to the topic \cite{heidenreich13}.

While nonparametric density estimation has been shown to be effective for many tasks, it has been observed empirically that estimator performance typically declines as data dimensionality increases, a manifestation of the curse of dimensionality. For the histogram and KDE this phenomenon has concrete mathematical analogs. For example, these estimators are only universally consistent\footnote{A density estimator is \emph{universally consistent} if it asymptotically recovers \emph{any} density.} if $n\to \infty$ and $h\to 0$, with $nh^d \to \infty$, where $n$ is the number of samples, $d$ is the data dimension, and $h$ is the bin width for the histogram and bandwidth parameter for the KDE \cite{gyorfi85}. One may then wonder whether there exists some other way to constrain model capacity so as to alleviate this exponential dependence on dimension. 

In this paper we theoretically analyze the advantages of including a constraint akin to matrix/tensor rank in a nonparametric density estimator. We do so by analyzing histogram-style estimators (estimators that output a density that are piecewise constant on bins defined by a grid) that are enforced to have a low-rank PARAFAC or Tucker decomposition. Enforcing this low-rank constraint in the histogram estimator allows for much faster rates on $h\to 0$, while still controlling estimation error. This can remove the exponential penalty on rate of convergence that dimensionality produces with the standard histogram estimator.

The bulk of this work focuses on analysis of hypothetical estimators that, while offering large statistical advantages over the standard histogram estimator, are not computationally tractable. At the end of this work we include experiments demonstrating that low-rank histograms, computed using an off-the-shelf nonnegative tensor factorization library \cite{tensorly}, consistently outperform the standard histogram estimator.
\subsection{Density Models}\label{ssec:models}
Here we introduce the low-rank models that will be used to constrain our estimators. The first is a \emph{multi-view model}. A multi-view model $p$ is a finite mixture $p  = \sum_{i=1}^k w_i f_i$ whose components are separable densities $f_i = p_{i,1}\otimes p_{i,2} \otimes \cdots \otimes p_{i,d}$ \footnote{For two functions $f,g$ let $f\otimes g: (x,y)\mapsto f(x)g(y)$. This is analogous to the tensor product of $L^2$ functions.} \cite{kasahara14, allman09,song13,song14}. A multi-view model has the following form
\begin{equation} \label{eqn:mv-intro}
  p \left(x_1,x_2,\ldots,x_d\right) = \sum_{i=1}^k w_i p_{i,1}\left(x_1\right)p_{i,2}\left(x_2\right) \cdots p_{i,d}\left(x_d\right).
\end{equation}
A multi-view model with one component is typically called a \emph{naive Bayes model}. For the estimators we propose, the component marginals $p_{i,j}$ will have the form of one-dimensional histograms and the number of components $k$ will be limited to restrict estimator capacity. When $k=1$ the model is equivalent to a naive Bayes model, $p \left(x_1,x_2,\ldots,x_d\right) = p_{1}\left(x_1\right)p_{2}\left(x_2\right) \cdots p_{d}\left(x_d\right)$. When the component marginals $p_{i,j}$ are histograms increasing $k$ expands the set of potential estimates from naive Bayes models when $k=1$ to all possible histogram estimates. 

The models in this paper are motivated by nonnegative tensor factorizations so we term them generally as \emph{nonparametric nonnegative tensor factorization} (NNTF) \emph{models}. The previous model was related to nonnegative PARAFAC \cite{shashua05}.
Our second model is based on the nonnegative Tucker decomposition \cite{kim07}. A density in this model utilizes $d$ collections, $\sF_1,\ldots, \sF_d$, of $k$ one-dimensional densities, $\sF_i = \left\{p_{i,1},\ldots, p_{i,k} \right\}$, and some probability measure that randomly selects one density from each $\sF_i$. This measure a can be represented by a tensor $W \in\rn^{k^{\times d}}$ where the probability of selecting $\left(p_{1,i_1},\ldots, p_{d,i_d}\right)$ from $ \sF_1\times\cdots\times \sF_d$ is $W_{i_1,\ldots,i_d}$. To sample from this model we first randomly select the marginal distributions $p_{1,i_1},\ldots,p_{d,i_d}$ according to $W$, and an observation is sampled according to the $d$-dimensional distribution $p_{1,i_1}\otimes p_{2,i_2}\otimes\cdots \otimes p_{2,i_2}$. The density of this model is
\begin{align}
  p\left(x_1,x_2,\ldots,x_d\right) = \sum_{i_1 =1}^k \cdots \sum_{i_d =1}^k W_{i_1,\ldots,i_d}p_{1,i_1}(x_1)p_{2,i_2}(x_2)\cdots p_{d,i_d}(x_d).\label{eqn:tucker}
\end{align}
We are unaware of previous works investigating this model for general probability distributions so we will simply term it the \emph{Tucker model}. Again we will investigate estimators where the component marginals are one-dimensional histograms and small $k$ corresponds to reduced model capacity. We remark that Tucker decompositions typically have a rank \emph{vector} $[k_1,\ldots,k_d]$ which for our model would mean that each $\sF_i$ contains $k_i$ vectors and $W$ would lie in $\rn^{k_1\times \cdots \times k_d}$. This sort of rank restriction could be used in our methods however, for simplicity, we just set $k_1=k_2=\cdots =k_d \triangleq k$.

\subsection{Overview of Results} \label{ssec:overview}
 The principal contribution of this paper is to analyze the advantage of incorporating rank restriction into density estimation. In Section \ref{ssec:notation} we precisely introduce the multi-view histogram and a histogram based on the Tucker model. With these models we investigate how quickly we can let bin width $h$ go to zero and rank $k$ grow, in relation to the amount of data $n$, while still being able to control estimation error and select an estimator that is nearly optimal for the allowable set.  For the multi-view histogram we show that one can control estimation error so long as $k/h$ is asymptotically dominated by $n$ and for the Tucker histogram we need $k/h + k^d$ to be asymptotically dominated by $n$ (we are omitting logarithmic factors here for convenience). This stands in stark contrast to standard space of histogram estimators which requires $1/h^{d}$ to be asymptotically dominated by $n$. We furthermore show that these estimators are universally consistent and that these rates cannot be significantly improved.
 
 For a second style of analysis we provide finite-sample bounds on the convergence of NNTF histogram estimators. We then construct a class of universally consistent density estimators that converge at rate $\widetilde{O}(1/\sqrt[3]{n})$\footnote{$f_n \in \widetilde{O}(g_n )\iff \exists k \text{ s.t. } f_n \in O\left(g_n\log^k g_n\right) $} on all densities that are a multi-view model with Lipschitz continuous component marginals. Note that the NNTF histogram estimators we construct do not require the target density to be an NNTF model to function well. Our estimators will select a good estimator so long as there exists any NNTF histogram that approximates the target density well; so our hypothetical estimators ``fail elegantly'' in some sense. We further show that the standard histogram can converge at a rate $\omega(1/\sqrt[d]{n})$\footnote{$f_n \in \omega\left(g_n\right) \iff \left|f_n/ g_n\right| \to \infty$ in probability. In $d$-dimensional space the standard histogram can converge slower than $1/\sqrt[d]{n}$.} on the same class of densities.
In Section \ref{sec:experiments} we experimentally investigate the efficacy of using NNTF histograms on real-world data. In lieu of the computationally intractable methods we investigated in our theoretical analyses, we use an existing low-rank nonnegative Tucker factorization algorithm to fit an NNTF histogram to data. Surprisingly even this method outperforms the standard histogram estimator with very high statistical significance. Lastly we mention that this paper is an extension of \cite{vandermeulen20} and contains a fair amount of overlap with that text.

\subsection{Previous Work}

Nonparametric density estimation has been extensively studied with the histogram estimator and KDE being some of the most well-known methods. There do exist, however, alternative methods for density estimation, e.g. the \emph{forest density estimator} \cite{liu11} and \emph{$k$-nearest neighbor density estimator} \cite{mack79}. The $L^1,L^2$, and $L^\infty$ convergence of the histogram and KDE has been studied extensively \cite{gyorfi85,devroye01,tsybakov08,jiang17}. The KDE is generally regarded as the superior density estimator, with some mathematical justification \cite{gyorfi85,silverman86}. Numerous modifications and extensions of the KDE have been proposed including utilizing variable bandwidth \cite{terrell92}, robust KDEs \cite{kim12,vandermeulen13,vandermeulen14}, methods for enforcing support boundary constraints \cite{schuster85}, and a supervised variant \cite{vandermeulen20proposal}. Finally we mention \cite{kim18} that demonstrated that uniform convergence of a KDE to its population estimate suffered when the intrinsic dimension of the data was lower than the ambient dimension, a phenomenon seemingly at odds with the curse of dimensionality. 

For our review of NNTF models we also include a general review of tensor/matrix factorizations since both can be viewed being low-rank models. In particular, for the multi-view model we have the following analogy
\begin{equation}\label{eqn:lrtens}
	\sum_{i=1}^k w_i p_{i,1}\left(x_1\right)p_{i,2}\left(x_2\right) \cdots p_{i,d}\left(x_d\right) \sim \sum_{i=1}^k \lambda_i \bv_{i,1}\otimes\bv_{i,2}\otimes  \cdots \otimes \bv_{i,d}.
\end{equation}

A great deal of work has gone into leveraging low-rank assumptions to improve matrix estimation, particularly in the field of \emph{compressed sensing} \cite{donoho06,recht10}. The most basic version of compressed sensing is concerned with estimating a ``tall'' vector $x$ from a ``short'' vector $y \triangleq A x$ where $A$ is a known ``short and fat'' matrix. One can recover $y$ if it is sparse and $A$ satisfies a property known as the restricted isometry property (RIP) \cite{vershynin18,candes08}. These methods can be extended to the estimation of matrices when $x$ is a low-rank matrix \cite{recht10,negahban11,negahban12}. In this extension $A$ is an order-3 tensor which acts as a linear operator on $x$ and satisfies an adjusted form of RIP. RIP commonly arises from matrices and tensors whose entries are random. Because of this compressed sensing techniques are useful in settings where one wants to estimate $x$ from random linear transforms of $x$. For example, in matrix completion observing the $(i,j)$-th entry of $x$ can be represented as an inner product of $x$ with an indicator matrix $\mathbf{e}_{i,j}$, i.e. $\left<x,\mathbf{e}_{i,j}\right>_F$. Thus the random observed indices $\left(i_1,j_1\right),\left(i_2,j_2\right),\ldots$ can be represented as random matrices $\be_{i_1,j_1},\be_{i_2,j_2},\ldots$ which are then stacked into $A$. Now $A$ is a random linear operator that is applied to $x$ to represent the observation of random entries of $x$ and the methods of compressed sensing can be used to recover $x$. Compressed sensing has also proven useful for multivariate regression and autoregressive models \cite{negahban11,negahban12}. Such techniques don't appear to be extensible to histogram estimation due to the lack of a linear sampling scheme.

General matrix/tensor factorization, including nonnegative matrix/tensor factorizations, has been extensively studied despite being inherently difficult due to non-convexity. The works \cite{donoho03,arora12} present potential theoretical grounds for avoiding the computational difficulties of nonnegative matrix factorization. One notable approach to tensor factorization is to assume, in the tensor representation in (\ref{eqn:lrtens}), that $d \ge 3$ and the collections of vectors $\bv_{1,j},\ldots, \bv_{k,j}$ are linearly independent for all $j$. Under this assumption we are guaranteed that the factorization (\ref{eqn:lrtens}) is unique \cite{allman09}. In \cite{anandkumar14} the authors present a method for recovering this factorization efficiently and demonstrate its utility for a variety of tasks. This work was extended in \cite{song14} to recover a multi-view KDE satisfying an analogous linear independence assumption and theoretically analyze the estimator's convergence to the true low-rank components. In \cite{song14} the authors investigate the sample complexity of their estimator but do not demonstrate that their technique has potential for improving rates for nonparametric density estimation in general. In \cite{song13} it was observed that using low-rank embeddings can improve density estimation. A multi-view histogram was investigated in \cite{kargas19} where the authors present an identifiability result and algorithm for recovering latent factors of the distribution. The related works \cite{amiridi21a,amiridi21b} consider a low-rank characteristic function as an approach to improving nonparametric density estimation. Though earlier works have observed that a low-rank approach improves nonparametric density estimation \cite{song13,song14,kargas19,amiridi21a,amiridi21b}, we are the first to demonstrate this through theoretical analysis of sample complexity. Finally we note that the Tucker decomposition has been utilized in Bayesian statistics \cite{schein16}.  We are unaware of any literature on factoring functions in $\rn^d \to \rn$ in a Tucker-inspired as we do in \eqref{eqn:tucker}.

\section{Theoretical Results}\label{sec:theory}
In this section we mathematically demonstrate that histogram estimators can achieve greater performance by restricting to NNTF models and using a proper procedure to select a representative from these using data. To simplify analysis we will only consider densities on the unit cube $\left[0,1\right)^d$ and analyze the number of bins per dimension $b$ which is the inverse of the bin width, i.e. $b = 1/h$. To state our results precisely we must introduce a fair amount of notation.
\subsection{Notation}\label{ssec:notation}
We will denote the $L^1$ and $L^2$ Lebesgue space norms via a $1$ or $2$ subscript. Let $\sD_d$ be the set of all densities on $\left[0,1\right)^d$. By \emph{density} we mean probability measures that are absolutely continuous with respect to the $d$-dimensional Lebesgue measure on $[0,1)^d$. We define a \emph{probability vector} or \emph{probability tensor} to simply mean a vector or tensor whose entries are nonnegative and sum to one. Let $\Delta_b$ denote the set of probability vectors in $\rn^b$ and $\sT_{d,b}$ the set of probability tensors in $\rn^{ b^{\times d}}$ \footnote{$\rn^{ b^{\times d}}$ is the set of $ \underbrace{b \times \cdots\times b}_{d\text{ times}}$ tensors. For example $\rn^{b^{\times 2}}$ is the set of $b\times b$ matrices.}. 
The product symbol $\prod$ will always mean the standard outer product, e.g. set product\footnote{For sets $S_1,\ldots,S_d$ we have $\prod_{i=1}^d S_i = S_1\times \cdots \times S_d = \left\{\left(s_1,\ldots,s_d\right):s_i \in S_i \forall i \right\}$.} or tensor product, when the multiplicands are not real numbers\footnote{For functions $f_1,\ldots,f_d$ then $\prod_{i=1}^d f_i = f_1\otimes \cdots \otimes f_d: \left(x_1,\ldots,x_d\right) \mapsto \prod_{i=1}^d f_i\left(x_i\right)$.}. The natural numbers $\nn$ will always denote \emph{positive} integers. For any natural number $b$ let $[b] =\left\{1,\ldots,b\right\}$. We will let $\1$ be the indicator function and $\conv$ be the convex hull. Later we will use projection operator where $\proj_S x \triangleq \arg \min_{s\in S} \left\|x-s\right\|_2$; for every instance in this work this projection is unique.

We will now construct the space of histograms on $\left[0,1\right)^d$. We begin with one-dimensional histograms, which will serve as the $p_{i,j}$ terms in \eqref{eqn:mv-intro} or \eqref{eqn:tucker}. We define $h_{1,b,i}$ with $i \in \left[b\right]$ to be the one-dimensional histogram where all weight is allocated to the $i$th bin. Formally we define this as
\begin{align*}
  h_{1,b,i}\left(x\right) \triangleq b \1\left(\frac{i-1}{b}\le x <\frac{i}{b} \right).
\end{align*}
Note that this is a valid density due to the leading $b$ coefficient.
We use these to construct higher-dimensional histograms. For a multi-index $A\ \in \left[b\right]^d$, let 
\begin{align*}
  h_{d,b,A} \triangleq \prod_{i=1}^d h_{1,b,A_i},
\end{align*}
i.e. the $d$-dimensional histogram with $b$ bins per dimension whose entire density is allocated to the bin indexed by $A$. Finally we define $\Lambda_{d,b,A}$ to be the support of $h_{d,b,A}$, i.e. the ``bins'' of a histogram estimator,
\begin{align*} 
  \Lambda_{d,b,A} \triangleq \prod_{i=1}^d \left[\frac{A_i-1}{b},\frac{A_i}{b}\right). 
\end{align*}

For a dataset $\sX = \left(X_1,\ldots, X_n\right)$ in $[0,1)^d$, the standard histogram estimator is 
\begin{align*}
  H_{d,b}\left(\sX\right) \triangleq \frac{1}{n}\sum_{i=1}^n\sum_{A \in [b]^d} h_{d,b,A}\1\left(X_i\in \Lambda_{d,b,A} \right).
\end{align*}

Let $\sH_{d,b} \triangleq \conv\left(\left\{h_{d,b,A} \middle| A \in \left[ b\right]^d \right\}\right)$, the set of all $d$-dimensional histograms with $b$ bins per dimension. Let $\sH_{d,b}^k$ be the set of histograms with at most $k$ separable components, i.e.
\begin{equation}\label{eqn:lrhist}
  \sH_{d,b}^k \triangleq \left\{\sum_{i=1}^k w_i\prod_{j=1}^d p_{i,j} \middle| w\in \Delta_k, p_{i,j}\in \sH_{1,b} \right\}.
\end{equation}
We will refer to elements in this space as \emph{multi-view histograms}. Elements in this space have the same form as \eqref{eqn:mv-intro} in Section \ref{ssec:models}. Similarly we define the space of \emph{Tucker histograms} to be
\begin{equation*}
  \tsH_{d,b}^k = \left\{\sum_{S \in \left[k\right]^{ d}} W_S \prod_{i=1}^d p_{i,S_i}\middle| W \in \sT_{d,k}, p_{i,j} \in \sH_{1,b} \right\}.
\end{equation*}
These have the same form as \eqref{eqn:tucker} in Section \ref{ssec:models}.

We emphasize that the collections of densities $\sH_{d,b}^k$ and $\tsH_{d,b}^k$ are primary objects of interest in this paper. The results we present are concerned with finding good density estimators restricted to these sets as $k$ and $b$ vary.

\subsection{Estimator Theoretical Results}
We present two approaches to the analysis of NNTF histogram estimators. All proofs and additional results are contained in the appendix.

First we provide an asymptotic analysis of NNTF histogram estimators in terms of estimation error control: how fast can we let $b$ and $k$ grow, with respect to $n$, while still controlling estimation error over all densities? An advantage of this analysis is that we can demonstrate that these rates are approximately optimal (up to logarithmic terms). 

For our second approach we present finite-sample bound analysis. In this analysis we first present a distribution-dependent bound that, for an estimator restricted to $\sH_{d,b}^k$, depends on $n,b,k$ and $\min_{q \in \sH_{d,b}^k} \left\|p-q\right\|_1$ where $p$ is the data generating ``target'' distribution. We follow this up with distribution-free bounds.

Distribution-free bounds for nonparametric density estimation require that the target distribution belong to a well-behaved class of distributions (such as Sobolev or H\"older classes) to enable bounding of the approximation error \cite{tsybakov08}. Our distribution-free finite-sample analysis assumes that the target density is a multi-view model whose component marginals are Lipschitz continuous. We construct an estimator that converges at a rate of approximately $1/\sqrt[3]{n}$ on this class of densities, independent of $d$. For comparison we show that the standard histogram estimator can converge at a rate worse than $1/\sqrt[d]{n}$ on this same class of densities. We mention again that this estimator shouldn't fail catastrophically when these distributional assumptions aren't exactly met so long as its approximation error, $\min_{p \in \sH_{d,b}^k} \left\|p-q \right\|_1$, isn't large. For brevity, and because the results are virtually direct analogues of their multi-view histogram counterparts, we reserve all finite-sample results for Tucker histogram for the appendix.

 \paragraph{Main Technical Tools} Our results rely on finding $L^1$ $\varepsilon$-coverings of the spaces of NNTF histograms and using Theorem 3.7 from \cite{ashtiani18} to select a good representative from that collection. The aforementioned theorem is a slight extension of Theorem 6.3 in \cite{devroye01} and is essentially proven in \cite{yatracos85}. As was mentioned in \cite{ashtiani18}, the application of these results typically does not yield a computationally practical algorithm. Likewise our results are simply meant to highlight the potential of NNTF models and are not practically implementable as is.

\subsubsection{Asymptotic Error Control}The following theorem states that one can control the estimation error of multi-view histograms with $k$ components and $b$ bins per dimension so long as $n \sim bk$ (omitting logarithmic factors). Recall that the standard histogram estimator requires $n\sim b^d$, so we have removed the exponential dependence of bin rate on dimensionality. Here and elsewhere the $\sim$ symbol is not a precise mathematical statement but rather signifies that the two values should be of the same order in a general sense. In the following $b$ and $k$ are functions of $n$ so the space of histograms changes as one acquires more data.
\begin{thm}\label{thm:genrate}
  For any pairs of sequences $b \to \infty$ and $k \to \infty$ satisfying 
  $$
  n/(bk\log(b) + k\log(k)) \to \infty,
  $$
  there exists an estimator $V_n\in \sH_{d,b}^k$ such that, for all $\varepsilon >0$
  \begin{align*}
    &\sup_{p \in \sD_d }P\left(\left\|V_n - p\right\|_1 >
    3 \min_{q\in \sH_{d,b}^k}\left\|p -q \right\|_1 + \varepsilon  \right) \to 0,
  \end{align*}
  where $V_n$ is a function of  $X_1,\ldots, X_n\simiid p$.
\end{thm}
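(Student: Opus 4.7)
The plan is to apply the Yatracos-type minimum-distance selection estimator in the form of Theorem 3.7 of Ashtiani et al., which the excerpt explicitly flags as the main tool. For any finite class $\mathcal{C}$ of densities, that theorem supplies a data-driven $V_n\in \mathcal{C}$ satisfying, with probability at least $1-\delta$,
\[
\|V_n-p\|_1 \le 3\min_{q\in\mathcal{C}}\|q-p\|_1 + C\sqrt{(\log|\mathcal{C}| + \log(1/\delta))/n}
\]
uniformly in $p\in\sD_d$. The whole problem therefore reduces to producing, for each fixed $\varepsilon>0$, an $L^1$ $\eta$-cover $\mathcal{C}_\eta \subseteq \sH_{d,b}^k$ with $\eta=\Theta(\varepsilon)$ whose log-cardinality is $O(bk\log b + k\log k)$.

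The cover is built in two layers. First, since the basis histograms $h_{1,b,1},\dots,h_{1,b,b}$ have pairwise disjoint supports, the $L^1$ distance on $\sH_{1,b}$ coincides with the $\ell_1$ distance on coefficients in the simplex $\Delta_b$; a standard volumetric argument then yields an $L^1$ $\eta$-cover of $\sH_{1,b}$ of size at most $(c_0/\eta)^b$, and similarly an $\eta$-cover of the weight simplex $\Delta_k$ of size at most $(c_0/\eta)^k$. Second, I would lift these to a cover of $\sH_{d,b}^k$ using the mixture-perturbation bound
\[
\Bigl\|\sum_i w_i \prod_j p_{i,j} - \sum_i \tw_i \prod_j \tp_{i,j}\Bigr\|_1 \le \|w-\tw\|_1 + \sum_i w_i \sum_j \|p_{i,j}-\tp_{i,j}\|_1,
\]
which follows from the standard tensor-product identity $\|f_1\ot\cdots\ot f_d - g_1\ot\cdots\ot g_d\|_1 \le \sum_j \|f_j-g_j\|_1$ for densities. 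Covering the weights at resolution $\eta/2$ and each of the $dk$ factor histograms at resolution $\eta/(2d)$ therefore gives an $\eta$-cover $\mathcal{C}_\eta \subseteq \sH_{d,b}^k$ with $\log|\mathcal{C}_\eta| = O\bigl(k\log(k/\eta) + dkb\log(bd/\eta)\bigr)$.

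Fixing $\varepsilon>0$, setting $\eta=\varepsilon/12$, and applying the selection theorem to $\mathcal{C}_\eta$, I obtain $V_n\in \sH_{d,b}^k$ with
\[
\|V_n-p\|_1 \le 3\min_{q\in \sH_{d,b}^k}\|q-p\|_1 + 3\eta + C\sqrt{\log|\mathcal{C}_\eta|/n}
\]
holding with probability tending to $1$, using that the best approximation error over $\mathcal{C}_\eta$ exceeds that over $\sH_{d,b}^k$ by at most $\eta$. With $\eta$ and $d$ fixed, the dominant part of $\log|\mathcal{C}_\eta|$ is $k\log k + bk\log b$, and the hypothesis $n/(bk\log b + k\log k)\to\infty$ forces $\sqrt{\log|\mathcal{C}_\eta|/n}\to 0$; for $n$ large enough the last two terms are together bounded by $\varepsilon$, delivering the theorem.

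The principal obstacle is the covering step: one must organise the factorised cover so that its log-size comes out to $O(bk\log b + k\log k)$, matching the sharp threshold in the hypothesis, rather than the looser $O(dkb\log(bn/\eta))$ one would get from a naive joint discretisation of the parameter space. Once the weight simplex and the factor simplices are covered independently at appropriately decoupled resolutions, the application of the Yatracos/Ashtiani theorem is essentially routine, and the distribution-free form of that theorem gives the claimed uniformity over $p\in\sD_d$.
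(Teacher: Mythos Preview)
Your approach is essentially the paper's: the same factorised cover of $\sH_{d,b}^k$ built from $\ell^1$-covers of the factor simplices $\Delta_b$ and the weight simplex $\Delta_k$ combined via the product-measure inequality $\|\prod_j f_j - \prod_j g_j\|_1 \le \sum_j \|f_j - g_j\|_1$, followed by the Yatracos/Ashtiani selection. The covering-number bound you obtain matches the paper's Corollary giving $N(\sH_{d,b}^k,\eta)\le (4bd/\eta)^{bdk}(4k/\eta)^k$.

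There is, however, one genuine gap in the logic as written: the quantifier order. The theorem asserts $\exists V_n\,\forall\varepsilon$, but your construction fixes $\varepsilon$ first, sets $\eta=\varepsilon/12$, and only then builds the cover $\mathcal C_\eta$ and the estimator; this yields $\forall\varepsilon\,\exists V_n^\varepsilon$, which is strictly weaker. A single estimator must not depend on $\varepsilon$. The paper handles this by letting the cover resolution $\delta=\delta(n)\to 0$ at a rate tied to $n/(bk\log b + k\log k)\to\infty$: since $\log N(\sH_{d,b}^k,\delta)\le (bk\log b + k\log k)\,d\,(1+\log(4d/\delta))$, one can choose $\delta(n)\to 0$ slowly enough that $n/\log N(\sH_{d,b}^k,\delta(n))\to\infty$ still holds, and then the selection theorem applied to the $\delta(n)$-cover gives a single $V_n$ for which the excess $3\delta(n)+4\varepsilon_n$ can be pushed below any fixed $\varepsilon$ eventually. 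Your argument becomes correct once you make the cover resolution shrink with $n$ rather than holding it fixed at $\varepsilon/12$.
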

The sample complexity for the multi-view histogram is perhaps more accurately approximated as being on the order of $dbk$ however the $d$ disappears in the asymptotic analysis.
The following theorem states that one can control the error of Tucker histogram estimates so long as $n\sim bk+k^d$ (omitting logarithmic factors).

\begin{thm}\label{thm:tuckrate}
  For any pairs of sequences $b \to \infty$ and $k \to \infty$ satisfying
  $$
  n/\left(bk\log(b) + k^d\log\left(k^d\right)\right) \to \infty,
  $$
  there exists an estimator $V_n\in \tsH_{d,b}^k$ such that, for all $\varepsilon >0$
  \begin{align*}
    &\sup_{p \in \sD_d }P\left(\left\|V_n - p\right\|_1 >
    3 \min_{q\in \tsH_{d,b}^k}\left\|p -q \right\|_1 + \varepsilon  \right) \to 0,
  \end{align*}
  where $V_n$ is a function of $X_1,\ldots, X_n\simiid p$.
\end{thm}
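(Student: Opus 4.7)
The approach mirrors the proof of the multi-view analogue (the preceding theorem); the only essentially new ingredient is that the core tensor $W \in \sT_{d,k}$ carries $k^d$ parameters in addition to the $dk$ factor histograms. I would construct an explicit $L^1$ $\varepsilon$-net $\sN_\varepsilon \subset \tsH_{d,b}^k$ by independently covering the parameter spaces, then feed $\sN_\varepsilon$ into the Yatracos / Devroye--Lugosi / Ashtiani minimum-distance selector (Theorem 3.7 of Ashtiani et al.) to obtain $V_n$, exactly as in the multi-view argument.

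The key technical lemma is a Lipschitz bound on the Tucker parameterization: if $\|W' - W\|_1 \le \eta_1$ and $\|p'_{i,j} - p_{i,j}\|_1 \le \eta_2$ for every $(i,j) \in [d]\times[k]$, then the two resulting Tucker densities differ in $L^1$ by at most $\eta_1 + d\eta_2$. I would prove this via the decomposition
$$ p' - p \;=\; \sum_{S} (W'_S - W_S)\prod_i p_{i,S_i} \;+\; \sum_S W'_S\Bigl(\prod_i p'_{i,S_i} - \prod_i p_{i,S_i}\Bigr),$$
bounding the first sum by $\|W' - W\|_1$ and telescoping the product in the second, then using the identity $\sum_{j=1}^k \sum_{S:\, S_i = j} W'_S = 1$ to keep the resulting constant at $d$ rather than $dk$. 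Combined with the standard $\ell^1$ simplex covering bound (an $\eta$-net of $\Delta_m$ has cardinality at most $(C/\eta)^m$), choosing $\eta_1 \sim 1/k$ and $\eta_2 \sim 1/b$ yields $\varepsilon \to 0$ and $\log |\sN_\varepsilon| = O(k^d \log k + dkb \log b)$.

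Finally I would invoke the minimum-distance selector to produce $V_n \in \sN_\varepsilon \subset \tsH_{d,b}^k$ satisfying, with probability $1 - o(1)$ uniformly in $p \in \sD_d$,
$$\|V_n - p\|_1 \;\le\; 3 \min_{q \in \tsH_{d,b}^k} \|q - p\|_1 + O(\varepsilon) + O\!\left(\sqrt{\frac{\log |\sN_\varepsilon|}{n}}\right).$$
Under the stated hypothesis $n/(bk\log b + k^d\log(k^d)) \to \infty$, both error contributions vanish, and the theorem follows. The principal obstacle is the Lipschitz step: one must argue that the factor perturbations do not compound across the $k^d$ terms of the sum, and the row-marginal identity on $W'$ is precisely what keeps the multiplicative constant at $d$ and makes the covering exponent end up as $dkb + k^d$ rather than anything with an extra factor of $k$.
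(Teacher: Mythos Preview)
Your proposal is correct and follows essentially the same route as the paper. The paper packages the covering bound as Lemma~\ref{lem:tuckcover}/Corollary~\ref{cor:tuckcover}, proved via exactly your two-term decomposition, and then feeds it through an asymptotic wrapper (Lemma~\ref{lem:finiteest}) around the Devroye--Lugosi selector; your separate resolutions $\eta_1,\eta_2$ versus the paper's single $\delta$ (split as $\varepsilon/2$ for the core and $\varepsilon/(2d)$ for the factors) is a purely cosmetic difference. One small remark: the ``row-marginal identity'' you invoke is more than you need --- the bound $\sum_S W'_S\sum_i\|p'_{i,S_i}-p_{i,S_i}\|_1 \le d\eta_2$ follows immediately from $\sum_S W'_S = 1$ alone.
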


Allowing $b$ to grow as aggressively as possible we achieve consistent estimation so long as $n \sim b\log b$ and $k$ grows sufficiently slowly, regardless of dimensionality.
\begin{cor}\label{cor:fastbin}
  \sloppy For all $d,b,k$ fix  $\sR_{d,b}^k$ to be either $\sH_{d,b}^k$ or $\tsH_{d,b}^k$\footnote{$\sR$ is fixed to $\sH$ or $\tsH$ and doesn't change as $n,b,k$ vary.}. For any sequence $b \to \infty$ with $n/\left(b\log b\right) \to \infty$, there exists a sequence $k \to \infty$ and estimator $V_n\in \sR_{d,b}^k$ such that, for all $\varepsilon >0$
  \begin{align*}
    \sup_{p \in \sD_d }P\left(\left\|V_n - p\right\|_1 
   > 3 \min_{q\in \sR_{d,b}^k}\left\|p -q \right\|_1 + \varepsilon  \right) \to 0,
  \end{align*}
  where $V_n$ is a function of  $X_1,\ldots, X_n\simiid p$.
\end{cor}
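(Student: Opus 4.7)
The plan is to deduce the corollary directly from Theorem~\ref{thm:genrate} (for $\sR=\sH$) and Theorem~\ref{thm:tuckrate} (for $\sR=\tsH$) by exhibiting, in each case, an explicit slowly-growing sequence $k_n\to\infty$ for which the relevant rate hypothesis holds. Write $\alpha_n \triangleq n/(b\log b)$; by assumption $\alpha_n \to \infty$, and since $b\log b \to \infty$ as well this forces $n\to\infty$ and hence $(n/\log n)^{1/d}\to\infty$ for every fixed $d$. The whole problem therefore reduces to choosing $k_n$ small enough to satisfy the hypothesis of the invoked theorem but still large enough to diverge.

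For the multi-view case I would set $k_n \triangleq \lfloor \sqrt{\alpha_n}\rfloor$, which diverges since $\alpha_n\to\infty$. Then $b k_n \log b \le \sqrt{\alpha_n}\cdot b\log b = n/\sqrt{\alpha_n} = o(n)$, and using $\alpha_n \le n$ (eventually) we get $k_n \log k_n \le \tfrac12 \sqrt{\alpha_n}\log\alpha_n \le \tfrac12 \sqrt{n}\log n = o(n)$. Hence $n/(bk_n\log b + k_n \log k_n) \to \infty$ and Theorem~\ref{thm:genrate} applies, delivering the required $V_n \in \sH_{d,b}^{k_n}$.

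For the Tucker case there is an additional constraint $k^d \log(k^d) = o(n)$ arising from the $k^d$ entries of the core tensor $W$. I would set $\mu_n \triangleq \min\!\bigl(\alpha_n,\,(n/\log n)^{1/d}\bigr)$ and $k_n \triangleq \lfloor \sqrt{\mu_n}\rfloor$; both quantities inside the minimum diverge (the first by hypothesis, the second because $n\to\infty$), so $k_n \to \infty$. The bound $b k_n \log b = o(n)$ is verified exactly as before, and the new constraint is handled by the computation $k_n^d \log(k_n^d) = d k_n^d \log k_n \le d (n/\log n)^{1/2}\cdot \log n = d\sqrt{n\log n} = o(n)$, using $k_n^d \le \mu_n^{d/2} \le (n/\log n)^{1/2}$ and $\log k_n \le \tfrac{1}{2d}\log n$. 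Theorem~\ref{thm:tuckrate} then yields the conclusion.

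The argument has no serious obstacle; the only point worth flagging is the ``$k^d$ bottleneck'' in the Tucker hypothesis, which here is harmless because $n\to\infty$ leaves ample slack to grow $k_n$ while keeping both $bk_n\log b$ and $k_n^d \log k_n$ negligible compared to $n$. In particular, no compatibility condition between $b$ and $d$ is required beyond the single asymptotic assumption $n/(b\log b)\to\infty$ supplied in the hypothesis.
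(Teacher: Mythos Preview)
Your proposal is correct and follows the same approach as the paper, which simply states that the corollary ``follows directly from Theorems~\ref{thm:genrate} and~\ref{thm:tuckrate} and selecting an appropriately slow rate for $k\to\infty$.'' Your version is more explicit in that you actually exhibit concrete choices of $k_n$ and verify the rate hypotheses, whereas the paper leaves this verification to the reader.
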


The following result shows that the approximation error of the estimators in Theorem \ref{thm:genrate}, Theorem $\ref{thm:tuckrate}$, and Corollary \ref{cor:fastbin} go to zero for all densities. Thus these estimators are universally consistent even when the NNTF model assumption is not satisfied.
\begin{lem}\label{lem:lrbias}
  Let $p \in \sD_d$. If $k\to \infty$ and $b \to \infty$ then $\min_{q\in \sH_{d,b}^k} \left\|p - q\right\|_1 \to 0$.
\end{lem}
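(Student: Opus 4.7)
The plan is to approximate $p$ first by a standard histogram at some coarse scale $b_0$, and then to realize that coarse histogram (or a sufficiently close copy) inside $\sH_{d,b}^k$ once $k$ and $b$ are both large.

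Fix $\varepsilon > 0$. I would first invoke the classical fact that standard histograms are dense in $\sD_d$ with respect to $\|\cdot\|_1$ (a consequence of the density of simple functions in $L^1([0,1)^d)$ followed by a nonnegative normalization step). This gives a fixed scale $b_0$ and a histogram $f \in \sH_{d,b_0}$ with $\|p - f\|_1 < \varepsilon/2$. Expanding $f = \sum_{A \in [b_0]^d} w_A \prod_{i=1}^d h_{1,b_0,A_i}$ for some $w \in \Delta_{b_0^d}$ exhibits $f$ as a multi-view histogram with at most $b_0^d$ components, albeit at the coarse bin rate $b_0$.

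For $b \geq b_0$ (no divisibility required), I would approximate each one-dimensional atom $h_{1,b_0,j}$ by an overlap-weighted element $\widetilde{h}_j \in \sH_{1,b}$, defined by $\widetilde{h}_j := \sum_{\ell=1}^b c_{j,\ell} h_{1,b,\ell}$ with $c_{j,\ell} := b_0 \cdot \mathrm{length}\bigl([(\ell-1)/b,\ell/b) \cap [(j-1)/b_0, j/b_0)\bigr)$. All but at most two of the $b$-bins sit entirely inside or outside the $b_0$-bin, so a direct estimate gives $\|h_{1,b_0,j} - \widetilde{h}_j\|_1 = O(b_0/b)$ uniformly in $j$. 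Setting $\widetilde{q} := \sum_{A \in [b_0]^d} w_A \prod_{i=1}^d \widetilde{h}_{A_i}$ then produces an element of $\sH_{d,b}^{b_0^d}$.

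To bound $\|f - \widetilde{q}\|_1$ I would use the telescoping identity $\prod_i \alpha_i - \prod_i \beta_i = \sum_i \bigl(\prod_{j<i} \alpha_j\bigr)(\alpha_i - \beta_i)\bigl(\prod_{j>i} \beta_j\bigr)$, apply Fubini, and use the fact that each factor is a one-dimensional density of $L^1$ norm one, yielding $\|\prod_i \alpha_i - \prod_i \beta_i\|_1 \leq \sum_{i=1}^d \|\alpha_i - \beta_i\|_1$. Applying this termwise and using $\sum_A w_A = 1$ gives $\|f - \widetilde{q}\|_1 \leq d \max_j \|h_{1,b_0,j} - \widetilde{h}_j\|_1$, which is $< \varepsilon/2$ for $b$ large enough. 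Then for any $k \geq b_0^d$ we have $\widetilde{q} \in \sH_{d,b}^{b_0^d} \subseteq \sH_{d,b}^k$, so $\min_{q \in \sH_{d,b}^k} \|p - q\|_1 \leq \|p - \widetilde{q}\|_1 < \varepsilon$, which is the desired conclusion given that $k,b \to \infty$.

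The load-bearing step is the telescoping estimate for products of densities, which is standard but central. The only mild subtlety is that $b$ need not be a multiple of $b_0$, which prevents an exact refinement of $f$ at scale $b$; the overlap-weight construction handles this at negligible cost because only a fixed (in $b$) number $d b_0$ of one-dimensional atoms need to be approximated, and each approximation has $L^1$ error $O(b_0/b) \to 0$.
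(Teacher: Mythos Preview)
Your proof is correct and follows essentially the same strategy as the paper's: approximate $p$ by a coarse histogram at some fixed scale $b_0$ (the paper's $B$), view that histogram as a multi-view model with at most $b_0^d$ components, approximate each one-dimensional atom $h_{1,b_0,j}$ by an element of $\sH_{1,b}$, and combine via the product inequality $\|\prod_i \alpha_i - \prod_i \beta_i\|_1 \le \sum_i \|\alpha_i - \beta_i\|_1$. The only cosmetic difference is that the paper invokes the general histogram consistency theorem from \cite{gyorfi85} a second time to approximate the one-dimensional atoms, whereas you give an explicit overlap-weighted construction with quantitative error $O(b_0/b)$; the paper also cites the product inequality as Lemma~3.3.7 of \cite{reiss89} rather than deriving it by telescoping.
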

A straightforward consequence of this is that the Tucker histogram approximation error also goes to zero.
\begin{lem}\label{lem:tuckbias}
  Let $p \in \sD_d$. If $k\to \infty$ and $b \to \infty$ then $\min_{q\in \tsH_{d,b}^k} \left\|p - q\right\|_1 \to 0$.
\end{lem}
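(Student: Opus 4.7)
The plan is to derive this as an immediate corollary of Lemma \ref{lem:lrbias} by establishing the set inclusion $\sH_{d,b}^k \subseteq \tsH_{d,b}^k$. Once that inclusion is in hand, monotonicity of $\min$ over a larger set gives
\[
  \min_{q \in \tsH_{d,b}^k} \left\|p - q\right\|_1 \;\le\; \min_{q \in \sH_{d,b}^k} \left\|p - q\right\|_1,
\]
and the right-hand side tends to zero by Lemma \ref{lem:lrbias}.

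The only step requiring any thought is the inclusion. Given a multi-view histogram $\sum_{i=1}^k w_i \prod_{j=1}^d p_{i,j}$ with $w \in \Delta_k$ and $p_{i,j} \in \sH_{1,b}$, I would realize it as a Tucker histogram by taking the $j$th collection of marginals to be $\{p_{1,j}, \ldots, p_{k,j}\} \subseteq \sH_{1,b}$ and defining the weight tensor $W \in \sT_{d,k}$ to be supported on the diagonal: $W_{S} = w_{S_1}$ whenever $S_1 = S_2 = \cdots = S_d$ and $W_S = 0$ otherwise. Then $W$ is nonnegative and sums to $\sum_{i=1}^k w_i = 1$, so $W \in \sT_{d,k}$, and
\[
  \sum_{S \in [k]^d} W_S \prod_{j=1}^d p_{j, S_j} \;=\; \sum_{i=1}^k w_i \prod_{j=1}^d p_{i,j},
\]
exhibiting the multi-view histogram as an element of $\tsH_{d,b}^k$.

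I do not anticipate any genuine obstacle here: the argument is essentially bookkeeping about the Tucker parametrization collapsing to the PARAFAC parametrization on the diagonal. The content of the lemma lies entirely in Lemma \ref{lem:lrbias}, which has already been established for the multi-view case.
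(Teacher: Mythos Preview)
Your proposal is correct and follows exactly the paper's approach: both arguments establish $\sH_{d,b}^k \subseteq \tsH_{d,b}^k$ by placing the multi-view weights on the diagonal of the Tucker weight tensor $W$ (with an index swap for the marginals), and then invoke Lemma~\ref{lem:lrbias}. The paper makes the index relabeling explicit via $\tf_{j,i}=f_{i,j}$, whereas you leave it implicit, but the content is identical.
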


The next theorem shows that the rate on $bk$ in Theorem \ref{thm:genrate} cannot be made significantly faster.
\begin{thm}\label{thm:lower}
	Let $d \ge 2$, $b\to \infty$, and $k\to \infty$ with $b \ge k$ and $n/\left(bk\right) \to 0$. There exists no estimator $V_n\in \sH_{d,b}^k$ such that, for all $\varepsilon >0$, the following limit holds
	\begin{align*}
		\sup_{p \in \sD_d }P\left(\left\|V_n - p\right\|_1 > 3 \min_{q\in \sH_{d,b}^k}\left\|p -q \right\|_1 + \varepsilon  \right) \to 0,
	\end{align*}
  where $V_n$ is a function of  $X_1,\ldots, X_n\simiid p$.
\end{thm}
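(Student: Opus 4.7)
The plan is to establish an information-theoretic minimax lower bound by exhibiting a large family of densities inside $\sH_{d,b}^k$ that are pairwise well-separated in $L^1$ but pairwise close in KL, and then invoking Fano's inequality. The first simplification is a reduction to $d=2$: for $d\ge 3$, every density $p(x_1,x_2)\prod_{j=3}^d \1_{[0,1)}(x_j)$ with $p\in \sH_{2,b}^k$ lies in $\sH_{d,b}^k$ (the extra uniform factors are themselves valid $b$-bin histograms), and an estimator in $\sH_{d,b}^k$ attaining the stated property would, via marginalization, yield a comparable estimator in $\sH_{2,b}^k$. So I work throughout with $d=2$ and $b\ge k$.

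For the packing, assume $k\mid b$ (the general case is handled by rounding) and partition $[0,1)$ on the $x_2$-axis into $k$ equal intervals $I_1,\ldots,I_k$; set $q_j=k\1_{I_j}$, which lies in $\sH_{1,b}$. For each block $j\in[k]$, index $\sH_{1,b}$ elements by balanced sign vectors $\tau_j\in\{-1,+1\}^b$ with $\sum_i\tau_{j,i}=0$ via
\begin{equation*}
  p_{\tau_j}(x_1)=\sum_{i=1}^b (1+\varepsilon_0\tau_{j,i})\1\!\left\{x_1\in\l[\tfrac{i-1}{b},\tfrac{i}{b}\r)\r\},
\end{equation*}
where $\varepsilon_0\in(0,1)$ is a fixed small constant ensuring nonnegativity. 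For $\omega=(\tau_1,\ldots,\tau_k)$ define
\begin{equation*}
  p_\omega(x_1,x_2)=\frac{1}{k}\sum_{j=1}^k p_{\tau_j}(x_1)\,q_j(x_2),
\end{equation*}
which is manifestly a convex combination of $k$ nonnegative separable histograms and hence lies in $\sH_{2,b}^k$. Because the supports of the $q_j$ are disjoint, $\|p_\omega-p_{\omega'}\|_1=\tfrac{1}{k}\sum_j\|p_{\tau_j}-p_{\tau_j'}\|_1=\varepsilon_0\cdot\text{Ham}(\omega,\omega')/(bk)$. A Varshamov--Gilbert argument on the balanced hypercube yields a subfamily $\Omega$ of cardinality at least $\exp(c_1 bk)$ with pairwise Hamming distance at least $c_2 bk$, so the induced densities are pairwise $L^1$-separated by at least $c_2\varepsilon_0$.

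For the information bound, a second-order expansion of $\log(p_{\tau_j}/p_{\tau_j'})$ gives $\mathrm{KL}(p_{\tau_j}\|p_{\tau_j'})\le C\varepsilon_0^2$, and averaging over the $k$ disjoint $x_2$-slabs yields $\mathrm{KL}(p_\omega\|p_{\omega'})\le C\varepsilon_0^2$ uniformly. Fano's inequality then forces any $L^1$-decoder of $\omega$ from $n$ samples to fail with probability at least $1-(Cn\varepsilon_0^2+\log 2)/\log|\Omega|$, which tends to one whenever $n/(bk)\to 0$. Since the $p_\omega$ are $c_2\varepsilon_0$-separated in $L^1$, a hypothetical estimator $V_n\in\sH_{d,b}^k$ attaining the stated uniform guarantee (with $\varepsilon:=c_2\varepsilon_0/2$) would produce a consistent decoder by nearest-element rounding, a contradiction. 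The target densities satisfy $\min_{q\in\sH_{d,b}^k}\|p_\omega-q\|_1=0$, so the approximation-error term in the statement vanishes and does not complicate the argument.

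The main obstacle is ensuring that the packing lives genuinely inside $\sH_{d,b}^k$ rather than merely inside the larger set of probability matrices of linear rank $\le k$; the nonnegative rank can be much larger than the rank, and a sign-matrix Varshamov--Gilbert construction does not obviously produce nonnegative factorizations. The construction above circumvents this by using $k$ a priori disjoint $x_2$-blocks, which forces the outer sum to be a bona fide convex combination of $k$ nonnegative rank-one histograms regardless of the sign vectors chosen on the $x_1$-axis. The other minor technicalities--enforcing the zero-sum balancing constraint on $\tau_j$ while preserving the Varshamov--Gilbert rate, and handling the case $k\nmid b$ by rounding--are routine.
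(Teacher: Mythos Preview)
Your proposal is correct, and the crucial structural move matches the paper's: to force the hard instances to live in $\sH_{d,b}^k$ (nonnegative rank $\le k$) rather than merely in the larger set of linear-rank-$k$ probability tensors, you both pick $k$ pieces with \emph{disjoint supports} on the second axis, so that the sum is automatically a bona fide convex combination of $k$ nonnegative rank-one histograms. Your $q_j=k\1_{I_j}$ plays exactly the role of the paper's $h_{1,b,\ell}$ for $\ell\in[k]$, and your lift to $d>2$ by tensoring with the uniform is the same idea as the paper's padding with $h_{1,b,1}$ in the extra coordinates and then summing those modes out.

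Where you diverge is in the endgame. The paper phrases everything as a \emph{reduction}: it embeds an arbitrary discrete distribution $p_n\in\Delta_{b\times k}$ into $\sH_{d,b}^k$, shows that a hypothetical $V_n$ would (after summing out modes) yield an $\ell^1$-consistent estimator of $p_n$, and then invokes a known minimax lower bound for categorical estimation (Theorem~2 of Han et al.) to reach the contradiction. You instead build an explicit Varshamov--Gilbert packing inside $\sH_{2,b}^k$ via balanced sign perturbations and run Fano's inequality directly. Your route is more self-contained and makes the ``$bk$ free parameters'' explicit; the paper's route is shorter and sidesteps the minor bookkeeping (balancing $\tau_j$, divisibility $k\mid b$, parity of $b$) by outsourcing the hard part to a cited result. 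Either is fine.
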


Likewise the rate on $bk+k^d$ can also not be significantly improved in Theorem \ref{thm:tuckrate}.

\begin{thm}\label{thm:tucklower}
	Let $d \ge 2$, $b\to \infty$, and $k\to \infty$ with $b \ge k$ and $n/\left(bk+k^d\right) \to 0$. There exists no estimator $V_n\in \tsH_{d,b}^k$ such that, for all $\varepsilon >0$, the following limit holds
	\begin{align*}
		\sup_{p \in \sD_d }P\left(\left\|V_n - p\right\|_1 > 3 \min_{q\in \tsH_{d,b}^k}\left\|p -q \right\|_1 + \varepsilon  \right) \to 0,
	\end{align*}
  where $V_n$ is a function of  $X_1,\ldots, X_n\simiid p$.
\end{thm}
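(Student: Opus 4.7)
The plan is to establish the lower bound via two complementary packing-plus-Fano arguments: one yielding the $bk$ rate (essentially reusing the ingredients of Theorem~\ref{thm:lower}) and one yielding the $k^d$ rate. Since $\max(bk,k^d)\ge(bk+k^d)/2$, the hypothesis $n/(bk+k^d)\to 0$ forces $n/\max(bk,k^d)\to 0$, so at every sufficiently large $n$ Fano's inequality applied to whichever of the two packings has the larger parameter count eventually defeats any candidate $V_n$.

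For the $bk$ piece, I would first observe the inclusion $\sH_{d,b}^k\subseteq\tsH_{d,b}^k$: given a multi-view density $\sum_{i=1}^k w_i\prod_j p_{i,j}$, relabel the marginals as $p'_{j,i}\triangleq p_{i,j}$ and place all mass of the weight tensor on the diagonal by setting $W_{i,\ldots,i}=w_i$ and all other entries to $0$, producing a valid element of $\sT_{d,k}$ that encodes the same density. Consequently any $L^1$-separated family of multi-view densities sits inside $\tsH_{d,b}^k$ with $\min_{q\in\tsH_{d,b}^k}\|p-q\|_1=0$, so the Fano packing already built in the proof of Theorem~\ref{thm:lower}---$\exp(\Omega(bk))$ densities at pairwise $L^1$ distance at least some constant $\varepsilon_0$ and with pairwise KL $O(1)$---applies verbatim to any measurable estimator and yields the $bk$ lower bound for $\tsH_{d,b}^k$.

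For the $k^d$ piece, I would construct a second packing in $\tsH_{d,b}^k$ by freezing the marginals to the atomic histograms $p_{j,i}\triangleq h_{1,b,i}$ (which lie in $\sH_{1,b}$ since $b\ge k$) and varying only the weight tensor. Every such density takes the form $p_W=\sum_{S\in[k]^d}W_S\, h_{d,b,S}$, piecewise constant on the $k^d$ disjoint bins $\Lambda_{d,b,S}$. Taking $W^\epsilon_S=(1+\delta\epsilon_S)/k^d$ for a small constant $\delta$ and $\epsilon\in\{-1,+1\}^{k^d}$ with $\sum_S\epsilon_S=0$, a Varshamov--Gilbert argument supplies a subcode of size $\exp(\Omega(k^d))$ with pairwise Hamming distance at least $k^d/4$. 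Disjointness of the bin supports then gives $\|p_{W^\epsilon}-p_{W^{\epsilon'}}\|_1=\sum_S|W^\epsilon_S-W^{\epsilon'}_S|\ge\delta/2$, while a second-order Taylor expansion of the logarithm, together with the cancellation of the first-order term forced by the zero-sum constraint, yields $\mathrm{KL}(p_{W^\epsilon}\,\|\,p_{W^{\epsilon'}})=O(\delta^2)$ uniformly; Fano's inequality then pins $\sup_p P(\|V_n-p\|_1>\delta/4)$ away from zero whenever $n=o(k^d)$.

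The main technical obstacle I anticipate is the careful bookkeeping around the zero-sum constraint on $\epsilon$, which is what keeps $W^\epsilon$ a probability tensor and also what drives the cancellation of the first-order KL contribution; it can be handled by running Varshamov--Gilbert on the first $k^d-1$ coordinates and fixing the last coordinate to enforce $\sum_S\epsilon_S=0$, losing only constant factors in the code size and in the Hamming separation. Combining the two packings then closes the argument: if a $V_n$ with the stated property existed, for every sufficiently large $n$ Fano applied to whichever packing dominates would force $\sup_p P(\|V_n-p\|_1>\varepsilon_0)\ge c>0$, contradicting the supposed convergence to $0$.
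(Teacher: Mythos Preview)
Your proposal is correct in spirit and would yield a valid proof, but it differs mechanically from the paper's argument. The paper also splits into the $bk$ and $k^d$ regimes (via a subsequence argument, which is equivalent to your ``pick whichever dominates at each $n$''), and for the $k^d$ regime it uses the same embedding you describe, namely freezing the marginals to $p_{j,i}=h_{1,b,i}$ so that the density is determined by the weight tensor $W\in\sT_{d,k}$. The divergence is in how the lower bound is extracted: rather than building an explicit Varshamov--Gilbert packing and invoking Fano, the paper observes that estimating $p_W$ from $n$ samples is \emph{equivalent}, via a measurable reduction, to estimating a categorical distribution on $k^d$ atoms from $n$ samples, and then quotes a ready-made minimax lower bound for discrete distribution estimation (Theorem~2 of Han--Jiao--Weissman, 2015) to conclude that $n/k^d\to 0$ makes consistent estimation impossible. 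The $bk$ case is handled identically, by reduction to estimating a categorical on $bk$ atoms, reusing the embedding from the proof of Theorem~\ref{thm:lower}.

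One small inaccuracy: you refer to ``the Fano packing already built in the proof of Theorem~\ref{thm:lower}'', but the paper's proof of that theorem does not construct a packing at all; it performs the reduction-to-categorical argument just described. This does not break your proposal, since your Fano construction stands on its own, but you should not expect to find a packing there to reuse. What your approach buys is self-containment: you never leave the density-estimation world and never cite an external minimax result. What the paper's approach buys is brevity and robustness, since the reduction is a one-line observation and all the delicate constant-tracking (your zero-sum bookkeeping, the second-order KL expansion) is outsourced to a known theorem about the simplex $\Delta_a$.
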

\subsubsection{Finite-Sample and Distribution-Independent Bounds}
In this section we investigate the convergence of multi-view histogram estimators to densities that satisfy the multi-view assumption. This will be done via a standard bias/variance decomposition style of argument. We begin with the following distribution-dependent finite-sample bound.

\begin{prop}
  \label{prop:finiteant}
    Let $d,b,k,n \in \nn$ and $0<\delta\le 1$. There exists an estimator $V_n \in \sH_{d,b}^k$ such that 
        \begin{equation*}
		\sup_{p \in \sD_d}P\left(\|p-V_n\|_1 > 3\min_{q\in\sH_{d,b}^k}\|p-q\|_1+ 7\sqrt{\frac{2bdk\log(4bdkn)}{n}}+7\sqrt{\frac{\log(\frac{3}{\delta})}{2n}}\right)<\delta,
        \end{equation*}
    where $V_n$ is a function of $X_1,\ldots,X_n\simiid p$.
\end{prop}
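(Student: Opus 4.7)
My approach would combine an $L^1$ covering of $\sH_{d,b}^k$ with the Yatracos-style minimum distance selection procedure (Theorem 3.7 of \cite{ashtiani18}, which the authors have already flagged as their main technical tool). The plan is: (i) construct a finite $\varepsilon$-net $\mathcal{C}_\varepsilon \subset \sH_{d,b}^k$ in $L^1$; (ii) apply the selection theorem to $\mathcal{C}_\varepsilon$ to obtain $V_n \in \mathcal{C}_\varepsilon \subset \sH_{d,b}^k$ whose $L^1$ error is at most three times the best approximation in $\mathcal{C}_\varepsilon$ plus a statistical term controlled by $\log|\mathcal{C}_\varepsilon|$ and $\log(1/\delta)$; (iii) pass from $\mathcal{C}_\varepsilon$ to $\sH_{d,b}^k$ by the triangle inequality, absorbing the $3\varepsilon$ slack into the statistical term by a prudent choice of $\varepsilon$.

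For the cover, any element of $\sH_{d,b}^k$ is determined by a weight vector $w \in \Delta_k$ and $dk$ probability vectors $\alpha_{i,j} \in \Delta_b$ (the coefficients of the one-dimensional histograms $p_{i,j}$ in the basis $\{h_{1,b,l}\}_{l=1}^b$). Since the $h_{1,b,l}$ are disjointly supported and have unit $L^1$ norm, the $L^1$ distance between two one-dimensional histograms equals the $L^1$ distance between their coefficient vectors. Using the tensor-product Lipschitz bound
$$\Bigl\|\bigotimes_{j=1}^d a_j - \bigotimes_{j=1}^d a'_j\Bigr\|_1 \le \sum_{j=1}^d \|a_j - a'_j\|_1$$
for unit-$L^1$ probability vectors, together with the triangle inequality applied to the mixture, a multi-view histogram formed from $\eta$-perturbations of all parameters is itself within $(d+1)\eta$ in $L^1$ of the original. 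Taking a standard $L^1$ $\eta$-net of each simplex (size bounded by $(C/\eta)^{b}$ per copy of $\Delta_b$ and $(C/\eta)^{k}$ for the weights) and forming the Cartesian product then plugging into \eqref{eqn:lrhist} produces $\mathcal{C}_\varepsilon$ with $\log|\mathcal{C}_\varepsilon| \lesssim (k + bdk)\log(1/\eta) \lesssim bdk\log(1/\eta)$.

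To assemble, I would choose $\eta$ of order $1/(bdkn)$, so that $\varepsilon = (d+1)\eta = O(1/n)$ is dominated by the statistical term while still yielding $\log|\mathcal{C}_\varepsilon| \le 2bdk\log(4bdkn)$ up to the constants appearing in the statement. Applying Theorem 3.7 of \cite{ashtiani18} to $\mathcal{C}_\varepsilon$ yields, with probability at least $1-\delta$,
$$\|V_n-p\|_1 \le 3\min_{q\in\mathcal{C}_\varepsilon}\|p-q\|_1 + C_1\sqrt{\tfrac{\log|\mathcal{C}_\varepsilon|}{n}} + C_2\sqrt{\tfrac{\log(1/\delta)}{n}},$$
and combining with the $\varepsilon$-density inequality $\min_{q\in\mathcal{C}_\varepsilon}\|p-q\|_1 \le \min_{q\in\sH_{d,b}^k}\|p-q\|_1 + \varepsilon$ gives the stated shape. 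The main obstacle is purely bookkeeping: pinning down the constant $7$, the precise argument $4bdkn$ inside the logarithm, and the $\log(3/\delta)/2$ form requires tracking the exact constants in the simplex covering bound, the tensor-product Lipschitz estimate, and the particular statement of Theorem 3.7 of \cite{ashtiani18}, then tuning $\eta$ accordingly. Conceptually, however, everything is contained in the standard covering-plus-Yatracos recipe, with the key observation being that the effective parameter count of $\sH_{d,b}^k$ is $O(bdk)$ rather than the $b^d$ of the full histogram class.
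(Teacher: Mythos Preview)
Your proposal is correct and follows essentially the same approach as the paper: build an $L^1$ $\varepsilon$-cover of $\sH_{d,b}^k$ of log-size $O(bdk\log(bdk/\varepsilon))$ (the paper obtains this via Corollary~\ref{cor:lrcover}, which is proved exactly as you outline), apply the Yatracos/Devroye--Lugosi selection lemma, and pass back from the cover to $\sH_{d,b}^k$ via the triangle inequality. The one packaging difference is that the paper \emph{couples} the cover radius and the accuracy parameter in Lemma~\ref{lem:densalg} (both equal $\varepsilon$), then uses the intermediate value theorem to pick $\varepsilon$ with $\rho(\varepsilon)=n$ and the crude bound $\varepsilon\geq 1/n$ to replace $1/\varepsilon$ by $n$ inside the logarithm; this is what produces the exact constant $7=3+4$ and the argument $4bdkn$. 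Your decoupled choice with a tiny cover radius would yield the same bound (indeed with a leading constant closer to $4$), so the bookkeeping you flag is genuinely the only remaining work.
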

To analyze the approximation error, $\|p-q\|_1$, we first consider the case where $p$ has a single component with $L$-Lipschitz marginals, so $p= \prod_{i=1}^d p_i$. Using the indicator function over the unit cube with H\"older's Inequality we have that $\|p-q\|_1 = \|\left(p-q\right)\1\|_1\le\|p - q\|_2  \left\|\1\right\|_2 =\left\|p-q\right\|_2$. It is possible to show that the $L^2$ projection of $p$ onto $\sH_{d,b}^1$ is achieved by simply projecting each marginal to its best approximating one-dimensional marginal i.e. (see the appendix)
\begin{equation*}
    \arg \min_{q \in \sH_{d,b}^1} \left\|\prod_{i=1}^d p_i - q \right\|_2= \prod_{i=1}^d \proj_{\sH_{1,b}}p_i.
\end{equation*}

Let $\lip_L$ be the set of $L$-Lipschitz probability density functions $[0,1]$ and let $m_L \triangleq \sup_{f \in \lip_L} \left\|f\right\|_2$. The following theorem characterizes the approximation error for separable densities with Lipschitz continuous marginals.

\begin{thm}\label{thm:l1bias}
    Let $b^2 \ge L^2/12$ and $f_1,\ldots,f_d$ be elements of $\lip_L$ then
  \begin{equation*}
        \left \| \prod_{i=1}^d f_i - \proj_{\sH_{d,b}^1}\prod_{i=1}^d f_i \right\|_1 \le \sqrt{ m_L^{2d}- \left(m_L^2 - \frac{L^2}{12b^2}  \right) ^d}.
    \end{equation*}
\end{thm}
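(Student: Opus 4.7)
The plan is to pass to the $L^2$ norm and exploit the tensor-product structure. Since $[0,1)^d$ has Lebesgue measure one, Cauchy--Schwarz gives $\|g\|_1 \le \|g\|_2$ for every square-integrable $g$, so it suffices to control the $L^2$-error. The displayed identity in the paragraph preceding the theorem identifies the $L^2$ projection of a separable density onto $\sH_{d,b}^1$ as the tensor product of the marginal projections: $\proj_{\sH_{d,b}^1}\prod_{i=1}^d f_i = \prod_{i=1}^d g_i$, with $g_i \triangleq \proj_{\sH_{1,b}} f_i$. Thus the task reduces to bounding $\bigl\|\prod_{i=1}^d f_i - \prod_{i=1}^d g_i\bigr\|_2$.

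Writing $f_i = g_i + r_i$ with $r_i \perp g_i$ in $L^2([0,1))$, I would invoke Fubini on the tensor products to turn norms and inner products of $\prod_{i=1}^d f_i$ and $\prod_{i=1}^d g_i$ into products of one-dimensional integrals. The projection orthogonality $\langle f_i, g_i\rangle = \|g_i\|_2^2$ collapses the cross term, yielding the clean identity
$$\bigl\|\prod_{i=1}^d f_i - \prod_{i=1}^d g_i\bigr\|_2^2 \;=\; \prod_{i=1}^d \|f_i\|_2^2 \;-\; \prod_{i=1}^d \|g_i\|_2^2.$$

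Next I would control each per-marginal residual by a Poincar\'e-type bound on a single bin. On an interval $J$ of length $h = 1/b$, the elementary identity $\int_J\int_J (\phi(x) - \phi(y))^2 \, dx \, dy = 2h \int_J (\phi - \bar\phi)^2$, together with the Lipschitz estimate $(\phi(x) - \phi(y))^2 \le L^2 (x-y)^2$ and the computation $\int_J\int_J (x-y)^2 = h^4/6$, gives $\int_J (\phi - \bar\phi)^2 \le L^2 h^3 / 12$. Since the $L^2$ projection onto $\sH_{1,b}$ acts as the piecewise bin average, summing this bound over the $b$ bins yields $\|r_i\|_2^2 = \|f_i\|_2^2 - \|g_i\|_2^2 \le L^2/(12 b^2)$.

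The finishing step is a brief monotonicity argument. Set $\alpha_i \triangleq \|f_i\|_2^2 \in \l[1, m_L^2\r]$ and $t_i \triangleq \|r_i\|_2^2 \in \l[0, L^2/(12b^2)\r]$, so $\|g_i\|_2^2 = \alpha_i - t_i$. The map $F(\alpha, t) \triangleq \prod_{i=1}^d \alpha_i - \prod_{i=1}^d (\alpha_i - t_i)$ has nonnegative partial derivatives in every $\alpha_i$ and every $t_i$ on the feasible region, and the hypothesis $b^2 \ge L^2/12$ ensures $m_L^2 \ge L^2/(12b^2)$ so the corner point $\alpha_i = m_L^2$, $t_i = L^2/(12 b^2)$ is admissible. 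Evaluating $F$ there gives the upper bound $m_L^{2d} - \l(m_L^2 - L^2/(12 b^2)\r)^d$; taking the square root and chaining with the initial $\|\cdot\|_1 \le \|\cdot\|_2$ step yields the theorem. The only place requiring any care is the monotonicity calculation, though once the problem is reduced to the two-variable-per-coordinate form it is essentially immediate.
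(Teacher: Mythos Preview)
Your proposal is correct and follows essentially the same route as the paper: pass from $L^1$ to $L^2$ via Cauchy--Schwarz on the unit cube, use the tensor structure together with projection orthogonality to obtain $\bigl\|\prod_i f_i - \prod_i g_i\bigr\|_2^2 = \prod_i\|f_i\|_2^2 - \prod_i\|g_i\|_2^2$, bound each marginal residual by $L^2/(12b^2)$, and finish with the monotonicity argument in the $\|f_i\|_2^2$ variables. The only cosmetic difference is that the paper bounds the per-bin variance by evaluating at the bin midpoint rather than via your Poincar\'e/double-integral identity, but both give the identical $L^2 h^3/12$ per-bin bound.
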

We show in the appendix that this decays at a rate of $O\left(b^{-1}\right)$ and that this rate is tight. To characterize the behavior of $m_L$ we have the following.
\begin{prop}\label{prop:l2bias}
	Let $m_L=\sup_{f \in \lip_L} \left\|f\right\|_2$, then
    \begin{equation*}
        m_L^2 = 
        \begin{cases}
                L^2/12 +1 & 0\le L \le 2\\
                \sqrt{\frac{8L}{9}} & L\ge2
        \end{cases}.
    \end{equation*}
\end{prop}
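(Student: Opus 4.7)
The plan is to prove matching upper and lower bounds in each of the two regimes. For attainment I would exhibit explicit extremals: when $L\le 2$ the affine density $f(x)=1+L/2-Lx$ lies in $\lip_L$ exactly because $L\le 2$ guarantees $f(1)=1-L/2\ge 0$, and a direct calculation gives $\|f\|_2^2=1+L^2/12$; when $L\ge 2$ the corner-tent $f(x)=\max(\sqrt{2L}-Lx,0)$ lies in $\lip_L$ exactly because its support $[0,\sqrt{2/L}]$ fits inside $[0,1]$, and a direct calculation gives $\|f\|_2^2=\tfrac{2}{3}\sqrt{2L}=\sqrt{8L/9}$. For the matching upper bound when $L\le 2$, I would start from the identity $\int_0^1\!\int_0^1(f(x)-f(y))^2\,dx\,dy=2\|f\|_2^2-2$, apply the pointwise Lipschitz bound $(f(x)-f(y))^2\le L^2(x-y)^2$, and use $\int_0^1\!\int_0^1(x-y)^2\,dx\,dy=1/6$ to conclude $\|f\|_2^2\le 1+L^2/12$.

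For the harder upper bound when $L\ge 2$, I would first pass to the monotone decreasing rearrangement (which preserves $L^p$-norms and the Lipschitz constant), then work with the distribution function $\mu(t)=|\{f>t\}|$. With $M=f(0)$ one has $\mu(M)=0$, and the Lipschitz condition translates to $\mu(t_1)-\mu(t_2)\ge(t_2-t_1)/L$ for $0\le t_1<t_2\le M$. Taking $t_2=M$ yields the pointwise bound $\mu(t)\ge(M-t)/L$, which in turn gives $1=\int_0^M\mu\,dt\ge M^2/(2L)$, i.e.\ $M\le\sqrt{2L}$. Setting $\nu(t)=\mu(t)-(M-t)/L$ produces a nonnegative, nonincreasing function on $[0,M]$ with $\int_0^M\nu\,dt=1-M^2/(2L)$, and
\begin{equation*}
\|f\|_2^2=\int_0^M 2t\,\mu(t)\,dt=\frac{M^3}{3L}+\int_0^M 2t\,\nu(t)\,dt.
\end{equation*}
Since $t\mapsto t$ is increasing and $\nu$ is nonincreasing, Chebyshev's integral inequality gives $\int_0^M 2t\,\nu(t)\,dt\le M\int_0^M\nu(t)\,dt=M-M^3/(2L)$, hence $\|f\|_2^2\le M-M^3/(6L)$. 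This is increasing on $[0,\sqrt{2L}]$ and so is maximized at $M=\sqrt{2L}$ with value $\tfrac{2}{3}\sqrt{2L}=\sqrt{8L/9}$.

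The hardest step will be the $L\ge 2$ analysis. One must verify that the monotone rearrangement of a Lipschitz function is again Lipschitz with the same constant, interpret the Lipschitz constraint on $\mu$ in integrated form so as to accommodate flat regions of $f$ (which cause jumps in $\mu$), and handle the case $f(1)>0$ by working on $[f(1),M]$ instead of $[0,M]$; this leads to the two-parameter maximization $\|f\|_2^2\le M+m-Mm-(M-m)^3/(6L)$ whose feasible-region extrema reproduce both regimes. The key technical move is the substitution $\nu(t)=\mu(t)-(M-t)/L$, which isolates the ``excess'' of $\mu$ above the tightest Lipschitz slope and sets up the Chebyshev bound.
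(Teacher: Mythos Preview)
Your proposal is correct and follows a genuinely different route from the paper. The paper proves the result as the $U=1$ special case of a more general proposition: it discretizes the problem to $n$-point nonnegative sequences with $\sum y_i/n=U$ and $|y_{i+1}-y_i|\le L/n$, shows via a perturbation/exchange argument that the discrete maximizer must be a monotone ``$\mathfrak{L}$-sequence'' (zeros followed by a run with successive gaps exactly $L/n$), computes this sequence explicitly, and then passes to the $n\to\infty$ limit to recover the continuous extremal $f_*$. By contrast, you work directly in the continuum. Your $L\le 2$ argument via the identity $\int_0^1\!\int_0^1(f(x)-f(y))^2\,dx\,dy=2\|f\|_2^2-2$ is a clean one-liner that the paper does not use; your $L\ge 2$ argument replaces the paper's discretization-plus-limit by decreasing rearrangement, the layer-cake representation $\|f\|_2^2=\int_0^M 2t\,\mu(t)\,dt$, and Chebyshev's integral inequality applied to the excess $\nu(t)=\mu(t)-(M-t)/L$, which is indeed nonnegative and nonincreasing precisely because the Lipschitz bound on $f$ gives $\mu(t_1)-\mu(t_2)\ge(t_2-t_1)/L$. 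The two-parameter bound $\|f\|_2^2\le M+m-Mm-(M-m)^3/(6L)$ you derive does maximize, over $m\ge 0$ and $M-m\le L$, to $1+L^2/12$ when $L\le 2$ and to $(2/3)\sqrt{2L}$ when $L\ge 2$ (the comparison at the seam reduces to $(u-2)^2(u+1)\ge 0$ with $u=\sqrt{2L}$). Your approach is shorter and avoids the limiting procedure; the paper's approach is more elementary in that it never invokes rearrangement-preserves-Lipschitz or Chebyshev, and it yields the slightly more general statement with arbitrary total mass $U$.
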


The following theorem is a finite-sample bound for a well-constructed multi-view histogram estimator.

\begin{thm} \label{thm:class}
  Fix $n,d,k \in \nn$, $L\ge 2$, and $1\ge\delta>0$. There exists $b$ and an estimator $V_n\in \sH_{d,b}^k$ such that 
  \begin{equation*}
	  \sup_{p\in Q}P\left(\|V_n-p\|_1 > \frac{21dk^{1/3}L^{\frac{d+3}{12}}}{n^{\frac{1}{3}}}\sqrt{\log(3Ldkn)}+7\sqrt{\frac{\log(\frac{3}{\delta})}{2n}}\right) < \delta,
  \end{equation*}
	where $Q$ is the set of densities of the form $\sum_{i=1}^k w_i \prod_{j=1}^d p_{i,j}$ with $p_{i,j}\in \lip_L$, $w \in \Delta_k$, and $V_n$ is a function of $X_1,\ldots,X_n\simiid p$.
\end{thm}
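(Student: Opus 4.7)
My plan is to combine Proposition \ref{prop:finiteant} (a distribution-dependent finite-sample bound) with Theorem \ref{thm:l1bias} (approximation error control for separable Lipschitz densities) and then optimize the number of bins $b$. The structural observation driving the argument is that the class $Q$ consists of convex combinations of $k$ separable densities with Lipschitz marginals, so the approximation error against $\sH_{d,b}^k$ can be controlled by applying Theorem \ref{thm:l1bias} componentwise and taking a convex combination of the per-component approximants.

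First, I would fix $b$ (to be chosen later) and apply Proposition \ref{prop:finiteant}, which yields an estimator $V_n \in \sH_{d,b}^k$ such that with probability at least $1-\delta$,
\begin{equation*}
\|V_n - p\|_1 \le 3 \min_{q \in \sH_{d,b}^k} \|p-q\|_1 + 7\sqrt{\frac{2bdk\log(4bdkn)}{n}} + 7\sqrt{\frac{\log(3/\delta)}{2n}}.
\end{equation*}
Next, for any $p = \sum_{i=1}^k w_i \prod_{j=1}^d p_{i,j} \in Q$, I would set $q_i := \prod_{j=1}^d \proj_{\sH_{1,b}} p_{i,j}$, which by the observation preceding Theorem \ref{thm:l1bias} is the $L^2$-best approximation of $\prod_j p_{i,j}$ in $\sH_{d,b}^1$. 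Defining $q := \sum_{i=1}^k w_i q_i \in \sH_{d,b}^k$ and invoking convexity of the $L^1$-norm together with Theorem \ref{thm:l1bias} (one and the same uniform bound works for every component), I would obtain
\begin{equation*}
\|p - q\|_1 \le \sum_{i=1}^k w_i \left\|\prod_{j=1}^d p_{i,j} - q_i \right\|_1 \le \sqrt{m_L^{2d} - (m_L^2 - L^2/(12b^2))^d}.
\end{equation*}
Linearizing via $a^d - c^d \le d\, a^{d-1}(a-c)$ for $a \ge c \ge 0$ reduces this to $\sqrt{d/12}\, m_L^{d-1} L/b$, and Proposition \ref{prop:l2bias} (using $L \ge 2$) gives $m_L^{d-1} = (8L/9)^{(d-1)/4}$, so the approximation error is bounded by a constant multiple of $\sqrt{d}\, L^{(d+3)/4}/b$.

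The final step balances the approximation error (of order $L^{(d+3)/4}/b$) against the estimation term (of order $\sqrt{bdk\log/n}$) by choosing $b$ of order $\lceil L^{(d+3)/6}(n/k)^{1/3}\rceil$. Substituting this choice back into both the approximation and estimation bounds, each contributes a term of order $\sqrt{d}\, L^{(d+3)/12}\, k^{1/3} n^{-1/3}\sqrt{\log(3Ldkn)}$, and absorbing $\sqrt{d}$ into $d$ (since $d \ge 1$) together with the constants from Proposition \ref{prop:finiteant} and the linearization should produce the advertised constant $21$. The main obstacle is bookkeeping rather than conceptual: I have to verify that the chosen $b$ satisfies the hypothesis $b^2 \ge L^2/12$ of Theorem \ref{thm:l1bias}, check that $\log(4bdkn)$ is controlled by a constant multiple of $\log(3Ldkn)$ for this polynomial-in-parameters choice of $b$, and collect all of the numerical constants so that they fit inside the factor $21$.
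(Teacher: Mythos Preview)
Your proposal is correct and follows essentially the same route as the paper: bound the per-component approximation error via Theorem \ref{thm:l1bias}, linearize $a^d-c^d\le d\,a^{d-1}(a-c)$ and plug in $m_L^2=\sqrt{8L/9}$ to get a $\sqrt{d}\,L^{(d+3)/4}/b$ bias bound, combine with Proposition \ref{prop:finiteant}, and balance by setting $b=\lceil n^{1/3}L^{(d+3)/6}k^{-1/3}\rceil$. The paper packages the linearization into a separate proposition and dispatches the constraint $b^2\ge L^2/12$ by noting that when $n<kL$ the claimed bound exceeds $2$ and is vacuous, while for $n\ge kL$ (and $L\ge2$) one has $b\ge L$; the log-comparison is handled by the crude inequality $8L^{(d+3)/6}dk^{2/3}n^{4/3}\le (3Ldkn)^{2d}$.
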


Here we analyze the asymptotic rate at which the estimator in Theorem \ref{thm:class} converges to a large class of multi-view models. To this end let 
$$\fQ_d= \left\{\sum_{i=1}^{k'} w_i p_{i,1}\otimes \cdots\otimes p_{i,d}\mid k'\in \nn, w \in \Delta_{k'}, L' \ge0, p_{i,j}\in \lip_{L'} \right\},$$

i.e. $\fQ_d$ is the space of all multi-view models whose component marginals are all Lipschitz continuous. Consider letting $L\to \infty$, $k \to \infty$, and $\delta \to 0$ in Theorem \ref{thm:class} arbitrarily slowly as $n\to \infty$. For some element $p$ in $\fQ_d$ its respective maximum Lipschitz constant and component number, $L'$ and $k'$, are fixed, so for sufficiently large $n$ we have $L>L'$ and $k>k'$ and the bound from Theorem \ref{thm:class} applies. From this it follows that $\left\|V_n - p\right\|_1 \in \widetilde{O}\left(1/\sqrt[3]{n} \right)$. So we can construct an estimator that converges at rate $\widetilde{O}\left(1/\sqrt[3]{n} \right)$ for \emph{any} multi-view model in $\fQ_d$, independent of dimension! This rate appears to be approximately optimal since ``for smooth densities, the average $L^1$ error for the histogram estimate must vary at least as $n^{-1/3}$'' (\cite{gyorfi85}, p. 99). For comparison the histogram estimator's convergence rate is hindered exponentially in dimension. 

\begin{prop} \label{prop:multilower}
    Let $V_n$ be the standard histogram estimator with $n/b^d \to \infty$. There exists $p\in \fQ_d$ such that $\left\|V_n - p\right\|_1 \in \omega(1/\sqrt[d]{n})$.
\end{prop}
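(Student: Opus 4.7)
The plan is to exhibit a single explicit density $p\in\fQ_d$ whose $L^1$ approximation error by $\sH_{d,b}$ is already $\Omega(1/b)$, and then to invoke the hypothesis $n/b^d\to\infty$ (equivalently $b=o(n^{1/d})$) to conclude $\|V_n-p\|_1\in\omega(n^{-1/d})$ deterministically.

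I would take $p(x_1,\ldots,x_d)=\prod_{i=1}^d 2x_i$, a product of $d$ copies of the $2$-Lipschitz density $p_1(x)=2x$ on $[0,1]$. This is a single-component multi-view model with Lipschitz marginals, so $p\in\fQ_d$. The standard histogram estimator always satisfies $V_n\in\sH_{d,b}$, being a convex combination of the atoms $h_{d,b,A}$, so deterministically $\|V_n-p\|_1\ge\min_{q\in\sH_{d,b}}\|p-q\|_1$, and it suffices to lower bound the right-hand side.

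To do so I would use marginalization. For any $q\in\sH_{d,b}$, integrating out $x_2,\ldots,x_d$ gives a one-dimensional marginal $q_1\in\sH_{1,b}$ (this is a straightforward consequence of the tensor-product structure of the atoms), and applying $\int|f|\,dx\ge|\int f\,dx|$ coordinatewise yields $\|p-q\|_1\ge\|p_1-q_1\|_1$. For the resulting 1D problem, writing $r=\sum_i w_i h_{1,b,i}$ decomposes the $L^1$ error as a sum over bins; on each bin $[(i-1)/b,i/b)$ the $L^1$-optimal constant value $w_i b$ is the median of $p_1=2x$, which by linearity equals the mean $(2i-1)/b$. A direct integration yields per-bin error $1/(2b^2)$, hence total error $1/(2b)$.

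Combining gives $\|V_n-p\|_1\ge 1/(2b)$ deterministically, and $n/b^d\to\infty$ implies $n^{1/d}/b\to\infty$, so $\|V_n-p\|_1\in\omega(n^{-1/d})$ as required. The only point requiring care is that the per-bin unconstrained $L^1$-optimal constants do not in general form a valid probability vector, but here the optima $(2i-1)/b$ correspond to weights $w_i=(2i-1)/b^2$ that sum to $1$ and are nonnegative, so the unconstrained minimum coincides with the $\sH_{1,b}$-minimum; for a generic Lipschitz marginal the cruder observation that the unconstrained minimum lower bounds the density-constrained one would still suffice.
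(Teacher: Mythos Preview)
Your proof is correct and follows essentially the same approach as the paper: reduce to a one-dimensional approximation problem via marginalization, compute the exact $L^1$ histogram approximation error $1/(2b)$ for the density $2x$, and conclude from $b=o(n^{1/d})$. The only cosmetic difference is that the paper takes $p_1(x)=2x$ and $p_i\equiv 1$ for $i>1$ (rather than all marginals equal to $2x$) and packages the 1D computation as a separate lemma, but the argument is the same.
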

\subsection{Discussion} \label{ssec:discussion}
While Theorem \ref{thm:class} gives an estimator with good convergence, the class of densities $Q$ is somewhat restrictive and likely not realistic for many situations where one would want to apply a nonparametric density estimator. Proposition \ref{prop:finiteant}, on the other hand, is not so restrictive since it depends on $\min_{q\in \sH_{d,b}^k}\left\|p -q \right\|_1$, and more clearly conveys the message of this paper. Typical works on multi-view nonparametric density estimation assume that the target density $p$ is a multi-view model and are interested in recovering the model components $p_{i,j}$ and $w$ from \eqref{eqn:mv-intro}. Similarly to how the standard histogram estimator doesn't assume $p \in \sH_{d,b}$ our work isn't meant to assume that $p$ is a multi-view model, but is instead meant to explore the benefits of including a hyperparameter $k$, in addition to $b$, to restrict rank of the estimator. Just as the histogram can approximate any density as $b \to \infty$, Lemmas \ref{lem:lrbias} and \ref{lem:tuckbias} show that the inclusion of $k$ does not hinder the approximation power of the estimator.  

To explore the trade-off between $b$ and $k$ first observe that setting $k = b^d$ gives $\sH_{d,b}^k = \sH_{d,b}$ since one can allocate one component to each bin. Theorem \ref{thm:genrate} and Proposition \ref{prop:finiteant} with $k=b^d$ gives a sample complexity of approximately $n \sim b^{d+1}$ which coincides with standard histogram estimator. Alternatively, setting $k=1$ restricts the estimator to separable histograms $\sH_{d,b}^1$, with a sample complexity of approximately $n\sim b$. Thus we have a span of $k$ yielding different estimators with maximal $k$ corresponding to the standard histogram and minimal $k$ corresponding to a naive Bayes assumption. We observe in Section \ref{sec:experiments} that this trade-off is useful in practice: we virtually never want $k$ to be maximized.
  
   \section{Experiments}\label{sec:experiments}
The previous section proved the existence of estimators that select NNTF histograms that offer advantages over the standard histogram. Unfortunately these estimators are not computationally tractable and only demonstrate the potential benefit incorporating an NNTF model in nonparametric density estimation. It would be nonetheless interesting to observe the behavior of an NNTF histogram estimator, even if it lacks the theoretical guarantees developed in the previous section. To this end we consider an $L^2$-minimizing NNTF histogram estimator. For all $d,b,k$ fix  $\sR_{d,b}^k$ to be either $\sH_{d,b}^k$ or $\tsH_{d,b}^k$. We consider an estimator $U_n$, that attempts to minimize 
\begin{equation*}
    U_n = \argmin_{\hat{p}\in \sR_{d,b}^k}\left\|\hat{p}- p\right\|^2_2.
\end{equation*}
Note that 
\begin{equation}\label{eqn:l2pop}
    \left\|\hat{p}- p\right\|^2_2
    = \left\|\hat{p}\right\|_2^2 - 2\left<p,\hat{p}\right> + \left\|p\right\|_2^2.
\end{equation}
The $\left\|p\right\|_2^2$ term in \eqref{eqn:l2pop} is not relevant when minimizing over $\hat{p}$. Additionally for data $X_1,\ldots,X_n \simiid p$ the law of large numbers gives us

$$\left<p,\hat{p}\right>=  \int_{[0,1)^d} p(x)\hp(x)dx = \E_{X\sim p}\left[\hp\left(X\right)\right]\approx \frac{1}{n}\sum_{i=1}^n \hat{p}(X_i)$$

so we may consider
\begin{equation}\label{eqn:l2prac}
	U_n \triangleq \argmin_{\hat{p}\in \sR{d,b}} \left\|\hat{p}\right\|_2^2 -\frac{2}{n} \sum_{i=1}^n \hat{p}(X_i)
\end{equation}
as a practical version of \eqref{eqn:l2pop} which is conveniently equivalent to nonnegative tensor factorization (see the appendix).

Risk expressions for nonparametric density estimation based on $L^2$-minimization similar to \eqref{eqn:l2prac} have appeared in previous works related to kernel density estimation \cite{vandermeulen13,cortes17,ritchie21}. Proving optimal rates on bandwidth in these settings seems challenging, however.

\begin{table}[ht]
  \label{tab:results}
  \centering
  \scriptsize
  \caption{Experimental Results}
  \begin{tabular}{c *{8}{c}}
    \hline
    Dataset &$d$ Red.&Dim.& Hist. Perf. & Tucker Perf. & Hist. Bins & Tucker Bins &Tucker $k$ & $p$-val.\\
    \hline \hline
    \multirow{8}{*}{MNIST}
    &
    \multirow{4}{*}{PCA}
    &2&-1.455$\pm$0.089&-1.502$\pm$0.102&6.531$\pm$1.499&8.375$\pm$1.780&4.968$\pm$1.976&5e-4\\
    &&3&-2.040$\pm$0.196&-2.268$\pm$0.195&4.781$\pm$0.738&6.718$\pm$1.565&5.781$\pm$1.340&2e-4\\
    &&4&-3.532$\pm$0.996&-4.014$\pm$0.655&4.031$\pm$0.585&5.343$\pm$1.018&4.375$\pm$0.695&2e-3\\
    &&5&-4.673$\pm$1.026&-6.157$\pm$2.924&3.468$\pm$0.499&4.343$\pm$0.592&3.281$\pm$0.514&4e-5\\\cline{2-9}

    &\multirow{4}{*}{Rand.}
    &2&-2.034$\pm$0.100&-2.099$\pm$0.102&6.062$\pm$1.197&7.562$\pm$1.657&2.062$\pm$1.784&3e-5\\
    &&3&-3.086$\pm$0.207&-3.331$\pm$0.387&4.812$\pm$0.526&6.843$\pm$1.227&2.687$\pm$1.959&1e-4\\
    &&4&-4.307$\pm$0.290&-5.731$\pm$0.435&3.500$\pm$0.559&5.656$\pm$0.642&2.593$\pm$1.497&8e-7\\
    &&5&-6.327$\pm$0.522&-9.539$\pm$1.053&3.250$\pm$0.433&4.718$\pm$0.571&2.562$\pm$1.087&8e-7\\
    \hline
    
    \multirow{8}{*}{Diabetes}
    &
    \multirow{4}{*}{PCA}
    &2&-2.079$\pm$0.122&-2.212$\pm$0.132&5.718$\pm$1.304&7.468$\pm$1.478&1.062$\pm$0.242&8e-6\\
    &&3&-3.010$\pm$0.364&-3.606$\pm$0.420&3.593$\pm$0.860&7.062$\pm$1.058&1.843$\pm$1.543&2e-6\\
    &&4&-4.002$\pm$0.415&-4.423$\pm$0.701&3.000$\pm$0.000&5.906$\pm$0.804&2.343$\pm$1.107&2e-3\\
    &&5&-6.139$\pm$0.661&-6.043$\pm$1.192&3.000$\pm$0.000&3.750$\pm$0.968&1.843$\pm$0.617&0.91\\\cline{2-9}

    &\multirow{4}{*}{Rand.}
    &2&-3.074$\pm$0.224&-3.277$\pm$0.287&6.843$\pm$1.227&9.250$\pm$1.936&1.093$\pm$0.384&7e-5\\
    &&3&-4.726$\pm$0.483&-5.353$\pm$0.751&4.968$\pm$0.769&8.406$\pm$1.343&1.625$\pm$1.672&2e-5\\
    &&4&-6.017$\pm$0.873&-7.732$\pm$1.497&4.062$\pm$0.704&6.718$\pm$1.328&2.093$\pm$1.155&1e-5\\
    &&5&-8.986$\pm$1.292&-12.61$\pm$2.477&3.062$\pm$0.242&5.093$\pm$0.521&2.531$\pm$0.865&2e-6\\
    \hline
  \end{tabular}
\end{table}
For our experiments we used the Tensorly library \cite{tensorly} to perform the nonnegative Tucker decomposition \cite{kim07} with Tucker rank $[k,k,\ldots,k]$ which was then projected to the simplex of probability tensors using \cite{duchi08}. We also performed experiments with nonnegative PARAFAC decompositions using \cite{shashua05,tensorly}. These decompositions performed poorly. This is potentially because the PARAFAC optimization is more difficult or the additional flexibility of the Tucker decomposition was more appropriate for the experimental datasets.
\subsection{Experimental Setup}\label{sec:expsetup}
Our experiments were performed on the Scikit-learn \cite{sklearn} datasets MNIST and Diabetes \cite{sklearn}, with labels removed. We use the estimated risk from \eqref{eqn:l2prac} to evaluate estimator performance (which may cause negative performance values, however lower values always indicate smaller estimated $L^2$-distance). Our experiments considered estimating histograms in $d=2,3,4,5$ dimensional space. We consider two forms of dimensionality reduction. First we consider projecting the dataset onto its top $d$ principle components. We also performed experiments projecting each dataset onto a random subspace of dimension $d$. These random subspaces were constructed so that each additional dimension adds a new index without affecting the others. To do this, we first randomly select an orthonormal basis for each dataset that remains unchanged for all experiments $v_1,v_2,\ldots$. Then to transform a point $X$ to dimension $d$ we perform the following transform:
  $X_{\text{reduced dim.}  }=\left[
    v_1\cdots v_d
\right]^T X$.
 We consider both transforms since PCA may select dimensions where the features tend to be independent, e.g. a multivariate Gaussian. After dimensionality reduction we scale and translate the data to fit in the unit cube.

With our preprocessed dataset, each experiment consisted of randomly selecting 200 samples for training and using the rest to evaluate performance (again using \eqref{eqn:l2prac}). For the estimators we tested all combinations using 1 to $\bmax$ bins per dimension and $k$ from 1 to $\kmax$. As $d$ increased the best cross validated $b$ and $k$ value decreased, so we reduced $\bmax$ and $\kmax$ for larger $d$ to reduce computational time, while still leaving a sizable gap between the best cross validated $b$ and $k$ and $\bmax$ and $\kmax$ across all runs of all experiment. For $d=2,3$ we have $\bmax = 15$ and $\kmax = 10$; for $d=4$ we have $\bmax = 12$ and $\kmax = 8$; for $d=5$ we have $\bmax = 8$ and $\kmax = 6$.
For parameter fitting we used random subset cross validation repeated 80 times using 40 of the 200 samples to evaluate the performance of the estimator fit using the other 160 samples. Performing 80 folds of cross validation was necessary because of the high variance of the histogram's estimated risk. This high variance is likely due to the noncontinuous nature of the histogram estimator itself and the noncontinuity of the histogram as a function of the data, i.e. slightly moving one training sample can potentially change histogram bin in which it lies. Each experiment was run 32 times and we report the mean and standard deviations of estimator performance as well as the best parameters found from cross validation. We additionally apply the two-tailed Wilcoxon signed rank test to the 32 pairs of performance results to statistically determine if the mean performance between the standard histogram and our algorithm are different and report the corresponding $p$-value. 
\subsection{Results}\label{ssec:results}
Our results are presented in Table 1. Apart from the density estimators' performance (``Hist. Perf.'' and ``Tucker Perf.'') this table contains the mean and standard deviation over the 32 trials for the optimal cross validated parameters for the estimators. This includes the number of bins, corresponding to $b$ from the earlier sections, and the Tucker rank $k$. We see that the Tucker histogram usually outperforms the standard histogram (``Tucker Perf.'' is less than ``Hist. Perf.'') with high statistical significance (small $p$-val). As one would expect, the Tucker histogram cross validates for more bins, presumably reducing the $k$ in exchange for more bins. Note that the MNIST PCA experiments always cross validated for the largest $k$ and yielded less statistically significant performance increases (outside of the unusual Diabetes PCA $d=5$ experiment). Presumably this experiment fit the NNTF assumption the least well (hence the large $k$) and thus received the least benefit.

\section{Conclusion}\label{sec:conclusion}
In this paper we have theoretically and experimentally demonstrated the advantages of including rank restriction into nonparametric density estimation. In particular, rank restriction may be a way to overcome the curse of dimensionality since it reduces the sample complexity penalty incurred from dimensionality from exponential to linear. This paper is an initial theoretical foray for demonstrating the potential of combining rank restriction with nonparametric methods.
\begin{ack} 
    Robert A. Vandermeulen acknowledges support by the Berlin Institute for the Foundations of Learning and Data (BIFOLD) sponsored by the German Federal Ministry of Education and Research (BMBF). Antoine Ledent acknowledges support by the German Research Foundation (DFG) award KL 2698/5-1 and the Federal Ministry of Science and Education (BMBF) award 01IS18051A. The authors thank Marius Kloft and Clayton Scott for useful discussions related to this work. The authors thank Nicholas D. Sidiropoulos for bringing to our attention some previous works on low-rank nonparametric density estimation.
\end{ack}


    \appendix
    \section*{Appendix Overview}
    Section \ref{appx:basic} contains the basic theoretical tools we need for the rest of the appendix. We will also be using some notation from the paper without introduction. In Section \ref{appx:asy} we have all the proofs for the results relating to asymptotic parameter rates of the histogram spaces in our estimators; this section corresponds to Section 2.2.1 in the main text. Section \ref{appx:finite} contains the proofs relating to finite-sample rates, and several expanded or additional results, corresponding to Section 2.2.2 in the main text. Section 2.2.2 in the main text contains some results that are simplified for readability and this appendix has some results which are tighter, but less readable. Additionally Section \ref{appx:finite} contains all the finite-sample analysis for Tucker histograms which were omitted in the main text. Experimental details, specifically the equivalency of $L^2$ minimization and nonnegative tensor factorization can be found in Section \ref{appx:exp}. Section \ref{appx:nonexist} is a bit different from the other sections and contains some discussion relevant to the future of this direction of research. There we show that not all densities can be written as a summation of separable densities with nonnegative coefficients.
    \section{Theoretical Basics} \label{appx:basic}
    \subsection{Notation}
    In this section we include some notation that was not contained in the main text but will be necessary for the rest of the appendix. In particular we will we need to introduce a fair amount of tensor notation which will be used to represent the various histograms. A histogram on the unit cube can naturally be represented as a tensor, which will be useful for deriving many of the results in this paper.

    From the main text recall that $\sT_{d,b}$ is the set of probability tensors in $\rn^{ b^{\times d}}$, i.e. their entries are nonnegative and sum to one. 

    Let $\sT_{d,b}^k$ be the set of tensors that are a convex combination of $k$ separable probability tensors (which are analogous to multi-view models) i.e.
    \begin{align*}
      \sT_{d,b}^k \triangleq \left\{ \sum_{i = 1}^k w_i \prod_{j=1}^d p_{i,j} \middle| w\in \Delta_k, p_{i,j} \in \Delta_b\right\}.
    \end{align*}

    The following is the set of probability tensors constructed via a nonnegative Tucker factorization ($\left[k\right]^d$ represents a multi-index)
    \begin{align*}
      \tsT_{d,b}^k \triangleq \left\{ \sum_{S \in \left[k\right]^{d}} W_S \prod_{i=1}^d p_{i,S_i} \middle| W\in \sT_{d,k}, p_{i,j} \in \Delta_b\right\}.
    \end{align*}

    For a multi-index $A \in \left[b\right]^d$ we define $\be_{d,b,A}$ as the element of $\sT_{d,b}$ where the $(A_1,\ldots, A_d)$-th entry is one and is zero elsewhere.

    Note that there exists a $\ell^1 \to L^1$ linear isometry $U_{d,b}:\sT_{d,b} \to \sH_{d,b}$ with $U_{d,b}$ defined as
    \begin{equation*}
      U_{d,b}(\be_{d,b,A}) = h_{d,b,A}.
    \end{equation*}
    The inverse function, $U^{-1}_{d,b}$, simply transforms a histogram to the tensor representing its bin weights and $U_{d,b}$ performs the reverse transformation. Note that $U_{d,b}$ is also a bijection between $\sT_{d,b}^k \to \sH_{d,b}^k$ and $\tsT_{d,b}^k \to \tsH_{d,b}^k$. Much of our analysis on histograms will be performed on the space of probability tensors with the analysis being translated to histograms via this operator.

    For a set of vectors $\sV$ we define $k\mix\left(\sV\right)\triangleq \left\{\sum_{i=1}^k w_i v_i \middle| w\in \Delta_k, v_i \in \sV \right\}$, i.e. the set of convex combinations of collections of $k$ vectors from $\sV$. We define $N\left(\sV, \varepsilon \right)$ to be the minimum cardinality for a subset of of $\sV$ which $\varepsilon$-covers $\sV$ (with closed balls) with respect to the $\left\|\cdot \right\|_1$ metric. It will be clear from context whether $\left\|\cdot \right\|_1$ represents the $\ell^1$, $L^1$, or total variation norm.
    \subsection{Preliminary Results}
    The following lemmas will be useful for all the theoretical results.
    \begin{lem} \label{lem:histcover}
      For all $0 < \varepsilon \le 1$ we have that 
      $N\left(\Delta_{b},\varepsilon\right)
      \le 
      \left(\frac{2b}{\varepsilon} \right)^b$.
    \end{lem}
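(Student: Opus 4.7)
The plan is to build an explicit $\varepsilon$-net of $\Delta_b$ by discretizing the ambient cube and keeping only grid cells that touch the simplex. Specifically, I partition $[0,1]^b$ into axis-aligned boxes of side length $\varepsilon/b$; along each coordinate axis this requires $\lceil b/\varepsilon\rceil$ intervals, so there are at most $\lceil b/\varepsilon\rceil^{b}$ boxes in total. For every box whose intersection with $\Delta_b$ is nonempty, I pick one representative point from that intersection. The resulting finite set is a subset of $\Delta_b$ by construction, which is what the definition of $N(\Delta_b,\varepsilon)$ requires.

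The covering property is a short $\ell^{1}$-geometry observation. Any two points lying in a common box differ by at most $\varepsilon/b$ in each of the $b$ coordinates, hence by at most $b\cdot(\varepsilon/b)=\varepsilon$ in $\ell^{1}$ norm. Therefore every $p\in\Delta_b$ lies within $\varepsilon$ of the representative selected from the box that contains $p$, so the chosen set is indeed an $\ell^{1}$ $\varepsilon$-cover of $\Delta_b$.

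It only remains to simplify the cardinality bound $\lceil b/\varepsilon\rceil^{b}$ into the claimed $(2b/\varepsilon)^{b}$. Since $0<\varepsilon\le 1$ and $b\ge 1$, we have $b/\varepsilon\ge 1$, and therefore $\lceil b/\varepsilon\rceil\le b/\varepsilon+1\le 2b/\varepsilon$. Raising to the $b$-th power gives the lemma. The degenerate case $b=1$ is trivial since $\Delta_1$ consists of a single point. There is no substantial obstacle in this argument; the only subtle point is matching the $\ell^{1}$ radius with the per-coordinate grid spacing, which is why I choose the box side length to be $\varepsilon/b$ rather than $\varepsilon$.
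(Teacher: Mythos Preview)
Your proof is correct. The paper's own argument is shorter only because it quotes the bound
$N(\operatorname{Conv}(\{\mu_1,\ldots,\mu_b\}),\varepsilon)\le (b+b/\varepsilon)^b$
from \cite{devroye01}, Section~7.4, applied with $\mu_i=\be_i$, and then uses $\varepsilon\le 1$ to get $b+b/\varepsilon\le 2b/\varepsilon$. Your route is self-contained: you build the cover explicitly by gridding $[0,1]^b$ with mesh $\varepsilon/b$ and picking one representative in $\Delta_b$ from each occupied cell, which even gives the slightly tighter intermediate count $\lceil b/\varepsilon\rceil^b\le (b/\varepsilon+1)^b$ before the same final simplification. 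Both approaches are standard; yours avoids the external reference, while the paper's recycles a result stated more generally for convex hulls of arbitrary measures.
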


    \begin{lem}\label{lem:simplecover}
      \sloppy For all $0 < \varepsilon \le 1 $ we have that $N\left(\sT_{d,b}^1, \varepsilon \right) \le \left(\frac{2bd}{\varepsilon} \right)^{bd}$.
    \end{lem}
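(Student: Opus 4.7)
The plan is to build a cover of $\sT_{d,b}^1$ as a product of covers of the $d$ marginal simplices $\Delta_b$, using Lemma \ref{lem:histcover} for each factor, together with a telescoping identity that controls the $\ell^1$ error of a product in terms of the sum of the $\ell^1$ errors of its factors.

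First I would invoke Lemma \ref{lem:histcover} with accuracy $\varepsilon/d$ (noting $0 < \varepsilon/d \le 1$) to obtain, for each coordinate $j \in [d]$, a set $C_j \subset \Delta_b$ with $|C_j| \le (2bd/\varepsilon)^b$ that covers $\Delta_b$ in $\ell^1$. I would then propose the candidate cover
\begin{equation*}
  \widetilde{C} \;\triangleq\; \left\{ \prod_{j=1}^d q_j \;:\; q_j \in C_j \text{ for each } j \right\} \;\subset\; \sT_{d,b}^1,
\end{equation*}
which has cardinality at most $\prod_{j=1}^d |C_j| \le (2bd/\varepsilon)^{bd}$, matching the target bound.

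The main step is to show $\widetilde{C}$ is genuinely an $\varepsilon$-cover. Given $\prod_j p_j \in \sT_{d,b}^1$, pick $q_j \in C_j$ with $\|p_j - q_j\|_1 \le \varepsilon/d$. Then use the telescoping identity
\begin{equation*}
  \prod_{j=1}^d p_j - \prod_{j=1}^d q_j \;=\; \sum_{j=1}^d \left( \bigotimes_{i<j} p_i \right) \otimes (p_j - q_j) \otimes \left( \bigotimes_{i>j} q_i \right),
\end{equation*}
and apply the triangle inequality together with the multiplicativity of the $\ell^1$ norm on tensor products, combined with the fact that $\|p_i\|_1 = \|q_i\|_1 = 1$ for probability vectors. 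This gives $\|\prod_j p_j - \prod_j q_j\|_1 \le \sum_{j=1}^d \|p_j - q_j\|_1 \le d \cdot (\varepsilon/d) = \varepsilon$, as desired.

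There is no serious obstacle here; the only points to verify carefully are that the $\ell^1$ norm is indeed multiplicative across tensor products (so that each term in the telescoping sum contributes only $\|p_j - q_j\|_1$), and that $\varepsilon/d$ lies in $(0,1]$ so that Lemma \ref{lem:histcover} applies with the claimed bound. Both are immediate, so the argument is essentially a clean product-cover construction.
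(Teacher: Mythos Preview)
Your proposal is correct and follows essentially the same approach as the paper: cover each factor $\Delta_b$ at scale $\varepsilon/d$ via Lemma~\ref{lem:histcover}, take the product of the covers, and bound the $\ell^1$ error of the tensor product by the sum of the marginal errors. The only cosmetic difference is that the paper cites an external lemma (Lemma~3.3.7 of Reiss) for the inequality $\bigl\|\prod_i p_i - \prod_i q_i\bigr\|_1 \le \sum_i \|p_i - q_i\|_1$, whereas you derive it directly from the telescoping identity; this is exactly how that lemma is proved, so the arguments coincide.
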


    \begin{lem}\label{lem:mixcover}
      Let $\sP$ be a set of probability measures, then
      \begin{align*}
        N\left(k\operatorname{-mix}\left(\sP\right), \varepsilon + \delta \right) \le N\left(\sP,\varepsilon\right)^k N\left(\Delta_k,\delta\right).
      \end{align*}
    \end{lem}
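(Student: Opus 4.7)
The plan is to construct an explicit $(\varepsilon+\delta)$-cover of $k\mix(\sP)$ by taking a product of a cover of $\sP$ (applied componentwise) with a cover of $\Delta_k$, and then verifying the covering property via a triangle inequality.

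Concretely, I would first fix an $\varepsilon$-cover $\sC \subseteq \sP$ of $\sP$ with $|\sC| = N(\sP,\varepsilon)$ and a $\delta$-cover $\sW \subseteq \Delta_k$ of $\Delta_k$ with $|\sW| = N(\Delta_k,\delta)$. I would then propose the candidate cover
\begin{equation*}
    \sB \triangleq \left\{ \sum_{i=1}^k w_i q_i \;\middle|\; w \in \sW,\ q_1,\ldots,q_k \in \sC \right\},
\end{equation*}
whose cardinality is at most $|\sC|^k |\sW| = N(\sP,\varepsilon)^k N(\Delta_k,\delta)$, giving the claimed upper bound as soon as $\sB$ is shown to be an $(\varepsilon+\delta)$-cover of $k\mix(\sP)$.

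To verify the covering property, I would pick an arbitrary $p = \sum_{i=1}^k w_i p_i \in k\mix(\sP)$, choose $q_i \in \sC$ with $\|p_i - q_i\|_1 \le \varepsilon$ for each $i$, and choose $w' \in \sW$ with $\|w - w'\|_1 \le \delta$. Setting $\tp = \sum_{i=1}^k w'_i q_i \in \sB$, the triangle inequality gives
\begin{equation*}
    \|p - \tp\|_1 \le \left\| \sum_{i=1}^k w_i (p_i - q_i) \right\|_1 + \left\| \sum_{i=1}^k (w_i - w'_i) q_i \right\|_1 \le \sum_{i=1}^k w_i \varepsilon + \sum_{i=1}^k |w_i - w'_i|,
\end{equation*}
where in the second term I use that each $q_i$ is a probability measure, so $\|q_i\|_1 = 1$. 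Since $w \in \Delta_k$, the first sum is exactly $\varepsilon$, and the second is $\|w - w'\|_1 \le \delta$, giving $\|p - \tp\|_1 \le \varepsilon + \delta$ as required.

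The argument is essentially a two-step triangle inequality and the only subtle point is the observation that $\|q_i\|_1 = 1$ for probability measures, which is what allows the weight-perturbation term to be bounded cleanly by $\|w - w'\|_1$ without any dependence on $\sP$. There is no serious obstacle; the hypothesis that $\sP$ consists of probability measures is doing exactly the work needed, and the rest is bookkeeping of cover cardinalities.
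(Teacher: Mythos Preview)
Your proposal is correct and essentially identical to the paper's own proof: both construct the candidate cover as mixtures of cover elements with cover weights, and both verify the $(\varepsilon+\delta)$-covering via the same two-term triangle inequality using $\|q_i\|_1=1$. The only cosmetic difference is the choice of intermediate point in the triangle inequality (you pivot through $\sum w_i q_i$, the paper through $\sum \tw_i p_i$), which is immaterial.
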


    \begin{lem}\label{lem:lrcover}
      For all $0 < \varepsilon \le 1$ the following holds $N\left(\sT_{d,b}^k, \varepsilon \right) \le \left(\frac{4bd}{\varepsilon} \right)^{bdk} \left( \frac{4k}{\varepsilon}\right)^k$.
    \end{lem}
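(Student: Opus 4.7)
The plan is to apply the mixture-covering lemma (Lemma \ref{lem:mixcover}) to the observation that $\sT_{d,b}^k = k\mix(\sT_{d,b}^1)$, which is immediate from the definition of $\sT_{d,b}^k$ as convex combinations of $k$ separable probability tensors. Once this identification is made, the bound on $N(\sT_{d,b}^k,\varepsilon)$ follows by plugging in the covering-number bounds already proved for $\sT_{d,b}^1$ and for $\Delta_k$.

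Concretely, I would first split the target radius as $\varepsilon = \varepsilon/2 + \varepsilon/2$ so that Lemma \ref{lem:mixcover} yields
\begin{equation*}
  N\bigl(\sT_{d,b}^k,\varepsilon\bigr)
  \;=\;
  N\bigl(k\mix(\sT_{d,b}^1),\,\tfrac{\varepsilon}{2}+\tfrac{\varepsilon}{2}\bigr)
  \;\le\;
  N\bigl(\sT_{d,b}^1,\tfrac{\varepsilon}{2}\bigr)^{k}\cdot N\bigl(\Delta_k,\tfrac{\varepsilon}{2}\bigr).
\end{equation*}
Next, I would invoke Lemma \ref{lem:simplecover} to bound the first factor by $(4bd/\varepsilon)^{bd}$ raised to the $k$-th power, giving $(4bd/\varepsilon)^{bdk}$. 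For the second factor I would apply Lemma \ref{lem:histcover} with $b$ replaced by $k$ and $\varepsilon$ by $\varepsilon/2$, which yields $(4k/\varepsilon)^{k}$. Combining these produces exactly the claimed bound $(4bd/\varepsilon)^{bdk}(4k/\varepsilon)^{k}$.

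The only modest subtlety worth double-checking is that the hypotheses of the preceding lemmas are satisfied after the halving: since $0<\varepsilon\le 1$ we have $0<\varepsilon/2\le 1$, so both Lemma \ref{lem:histcover} and Lemma \ref{lem:simplecover} apply without issue. There is no genuine obstacle here — the lemma is really just a bookkeeping step assembling the pieces already established, so the "proof plan" is essentially the one computation above. The main conceptual ingredient, namely the mixture-covering inequality of Lemma \ref{lem:mixcover}, does all the heavy lifting by reducing a cover of a mixture class to covers of the mixing simplex and of the component class separately.
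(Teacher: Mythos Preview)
Your proposal is correct and matches the paper's proof essentially line for line: the paper also observes $\sT_{d,b}^k = k\mix(\sT_{d,b}^1)$, applies Lemma~\ref{lem:mixcover} with the split $\varepsilon = \varepsilon/2 + \varepsilon/2$, and then plugs in Lemmas~\ref{lem:simplecover} and~\ref{lem:histcover} to obtain the stated bound.
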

    Through application of the $U_{d,b}$ operator we now have a characterization of the complexity of the space $\sH_{d,b}^k$.

    \begin{cor}\label{cor:lrcover}
      For all $0 < \varepsilon \le 1$ following holds $N\left(\sH_{d,b}^k,\varepsilon\right)\le \left(\frac{4bd}{\varepsilon} \right)^{bdk} \left( \frac{4k}{\varepsilon}\right)^k.$
    \end{cor}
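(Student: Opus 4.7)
The plan is to transfer the covering number bound from the tensor side to the histogram side by using the isometry $U_{d,b}:\sT_{d,b}\to\sH_{d,b}$ that was introduced in the notation. Since Lemma \ref{lem:lrcover} already provides the bound $N(\sT_{d,b}^k,\varepsilon)\le\left(\tfrac{4bd}{\varepsilon}\right)^{bdk}\left(\tfrac{4k}{\varepsilon}\right)^k$, the corollary is essentially an immediate consequence once the isometric identification is formalized.

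Concretely, first I would fix $0<\varepsilon\le 1$ and invoke Lemma \ref{lem:lrcover} to obtain a finite set $C\subseteq\sT_{d,b}^k$ of cardinality at most $\left(\tfrac{4bd}{\varepsilon}\right)^{bdk}\left(\tfrac{4k}{\varepsilon}\right)^k$ which is an $\varepsilon$-cover of $\sT_{d,b}^k$ with respect to $\|\cdot\|_1$ on $\ell^1$. Then I would define $C' \triangleq U_{d,b}(C) \subseteq \sH_{d,b}^k$. Since $U_{d,b}$ is stated in the notation subsection to be an $\ell^1\to L^1$ linear isometry and a bijection from $\sT_{d,b}^k$ onto $\sH_{d,b}^k$, for any $h\in\sH_{d,b}^k$ the preimage $T = U_{d,b}^{-1}(h) \in \sT_{d,b}^k$ admits some $T'\in C$ with $\|T-T'\|_1\le\varepsilon$, and applying the isometry gives $\|h - U_{d,b}(T')\|_1 = \|T-T'\|_1\le\varepsilon$. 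Hence $C'$ is an $\varepsilon$-cover of $\sH_{d,b}^k$ with respect to the $L^1$ norm, and $|C'|=|C|$ because $U_{d,b}$ is injective.

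Combining these observations yields the claimed inequality. The only point that requires care is making sure the isometry statement is applied to the correct norms on both sides (i.e.\ the $\ell^1$ norm on tensors and the $L^1$ Lebesgue norm on densities), but this is exactly the content of the definition of $U_{d,b}$ in Section \ref{appx:basic}. There is no genuine obstacle here; the proof is a one-line transfer argument, and the substantive work has already been done in Lemma \ref{lem:lrcover}.
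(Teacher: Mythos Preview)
Your proposal is correct and matches the paper's approach exactly: the paper states Corollary~\ref{cor:lrcover} immediately after Lemma~\ref{lem:lrcover} with only the remark ``Through application of the $U_{d,b}$ operator we now have a characterization of the complexity of the space $\sH_{d,b}^k$,'' which is precisely the isometry transfer argument you spell out.
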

    The following are analogous results for Tucker histograms.
    \begin{lem}\label{lem:tuckcover}
      For all $0 < \varepsilon \le 1$ the following holds $N\left(\tsT_{d,b}^k, \varepsilon \right) \le \left(\frac{4bd}{\varepsilon} \right)^{bdk} \left( \frac{4k^d}{\varepsilon}\right)^{k^d}$.
    \end{lem}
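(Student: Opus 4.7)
The plan is to build the cover of $\tsT_{d,b}^k$ by separately covering the two ingredients that parameterize a Tucker tensor: the pool of $dk$ one-dimensional marginals $p_{i,j} \in \Delta_b$ (one for each $(i,j)\in[d]\times[k]$), and the weight tensor $W \in \sT_{d,k}$. The key point, which rules out a naive application of Lemma \ref{lem:mixcover} as a black box (that would inflate the marginal exponent from $bdk$ to $bdk^d$ since there are $k^d$ terms in the sum), is that all $k^d$ separable summands $\prod_i p_{i,S_i}$ share marginals drawn from the same pool. Thus we must cover the pool once and count once.

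First I would take an $\varepsilon_1$-cover $\tilde\Delta_b \subseteq \Delta_b$ with $|\tilde\Delta_b| \le (2b/\varepsilon_1)^b$ (Lemma \ref{lem:histcover}), and an $\varepsilon_2$-cover $\tilde\sT_{d,k} \subseteq \sT_{d,k}$. Since $\sT_{d,k}$ is just $\Delta_{k^d}$ (a probability vector of length $k^d$), Lemma \ref{lem:histcover} also gives $|\tilde\sT_{d,k}| \le (2k^d/\varepsilon_2)^{k^d}$. The candidate cover of $\tsT_{d,b}^k$ consists of all tensors $\tilde T = \sum_S \tilde W_S \prod_i \tilde p_{i,S_i}$ obtained by picking each of the $dk$ marginals from $\tilde\Delta_b$ and the weight from $\tilde\sT_{d,k}$, giving total cardinality $(2b/\varepsilon_1)^{bdk}\cdot(2k^d/\varepsilon_2)^{k^d}$.

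The core estimate is to show this is an $(d\varepsilon_1+\varepsilon_2)$-cover. Given $T=\sum_S W_S \prod_i p_{i,S_i}$, pick $\tilde p_{i,j}$ within $\varepsilon_1$ of $p_{i,j}$ for each $(i,j)$, and $\tilde W$ within $\varepsilon_2$ of $W$. Splitting $T-\tilde T$ via the intermediate $T' = \sum_S W_S \prod_i \tilde p_{i,S_i}$, the second piece $T'-\tilde T$ has $\ell^1$ norm $\le \|W-\tilde W\|_1 \le \varepsilon_2$ because each separable summand is a probability tensor. For the first piece, a standard telescoping identity $\prod_i a_i - \prod_i b_i = \sum_{j}(\prod_{i<j}b_i)(a_j-b_j)(\prod_{i>j}a_i)$ combined with the fact that all factors have unit $\ell^1$ norm yields $\|\prod_i p_{i,S_i} - \prod_i \tilde p_{i,S_i}\|_1 \le \sum_i \|p_{i,S_i}-\tilde p_{i,S_i}\|_1 \le d\varepsilon_1$ for every multi-index $S$. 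Averaging against $W$ preserves this bound, giving $\|T-T'\|_1 \le d\varepsilon_1$.

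Finally, setting $\varepsilon_1 = \varepsilon/(2d)$ and $\varepsilon_2 = \varepsilon/2$ makes the total error exactly $\varepsilon$ and turns the cardinality bound into $(4bd/\varepsilon)^{bdk}\cdot(4k^d/\varepsilon)^{k^d}$, matching the claim. The only potential pitfall is the telescoping estimate: one must verify that the factors really behave like probability vectors so that the product of $\ell^1$ norms collapses to $1$, but this is automatic here since every $p_{i,j}$ and $\tilde p_{i,j}$ lies in $\Delta_b$. Beyond that, the argument is essentially an adaptation of the strategy used for Lemma \ref{lem:lrcover}, with the $\Delta_k$ cover replaced by a $\Delta_{k^d}$ cover to accommodate the larger weight tensor in the Tucker parameterization.
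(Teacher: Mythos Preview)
Your proposal is correct and follows essentially the same approach as the paper: cover the $dk$ shared marginals at scale $\varepsilon/(2d)$ and the weight tensor in $\sT_{d,k}\cong\Delta_{k^d}$ at scale $\varepsilon/2$, then bound the error by splitting through the intermediate with swapped marginals. The only cosmetic difference is that the paper invokes the product-measure inequality (Lemma 3.3.7 of Reiss) for the bound $\|\prod_i p_{i,S_i}-\prod_i \tilde p_{i,S_i}\|_1\le\sum_i\|p_{i,S_i}-\tilde p_{i,S_i}\|_1$, whereas you spell out the equivalent telescoping argument directly.
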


    \begin{cor}\label{cor:tuckcover}
      For all $0 < \varepsilon \le 1$ following holds $N\left(\tsH_{d,b}^k,\varepsilon\right)\le \left(\frac{4bd}{\varepsilon} \right)^{bdk} \left( \frac{4k^d}{\varepsilon}\right)^{k^d}.$
    \end{cor}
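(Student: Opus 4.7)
The plan is to transfer the covering number bound on the tensor space $\tsT_{d,b}^k$ (which is exactly Lemma \ref{lem:tuckcover}) over to the function space $\tsH_{d,b}^k$ using the isometry $U_{d,b}$ introduced in the Notation subsection. Since $U_{d,b}: \sT_{d,b} \to \sH_{d,b}$ is an $\ell^1 \to L^1$ linear isometry and restricts to a bijection between $\tsT_{d,b}^k$ and $\tsH_{d,b}^k$, any $\varepsilon$-cover of the former under $\left\|\cdot\right\|_1$ (on tensors) maps to an $\varepsilon$-cover of the latter under $\left\|\cdot\right\|_1$ (on densities) with the same cardinality.

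Concretely, first I would invoke Lemma \ref{lem:tuckcover} to obtain a set $\sC \subset \tsT_{d,b}^k$ of cardinality at most $\left(\frac{4bd}{\varepsilon}\right)^{bdk}\left(\frac{4k^d}{\varepsilon}\right)^{k^d}$ such that every $T \in \tsT_{d,b}^k$ satisfies $\left\|T - C\right\|_1 \le \varepsilon$ for some $C \in \sC$. I would then consider the image $U_{d,b}(\sC) \subset \tsH_{d,b}^k$, which has the same cardinality since $U_{d,b}$ is a bijection on the relevant domain. For any $h \in \tsH_{d,b}^k$, the preimage $T = U_{d,b}^{-1}(h) \in \tsT_{d,b}^k$ is $\varepsilon$-close in $\ell^1$ to some $C \in \sC$, and applying $U_{d,b}$ together with the isometry property yields
\begin{equation*}
\left\|h - U_{d,b}(C)\right\|_1 = \left\|U_{d,b}(T - C)\right\|_1 = \left\|T - C\right\|_1 \le \varepsilon,
\end{equation*}
so $U_{d,b}(\sC)$ is an $\varepsilon$-cover of $\tsH_{d,b}^k$ with the desired cardinality.

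There is essentially no obstacle: the only thing worth verifying with care is that $U_{d,b}$ really does map $\tsT_{d,b}^k$ onto $\tsH_{d,b}^k$, which follows directly from the definitions since a Tucker combination $\sum_{S} W_S \prod_{i} p_{i,S_i}$ of probability vectors $p_{i,j} \in \Delta_b$ corresponds under $U_{d,b}$ to the Tucker combination $\sum_{S} W_S \prod_{i} U_{1,b}(p_{i,S_i})$ of one-dimensional histograms in $\sH_{1,b}$, which is precisely the defining form of elements of $\tsH_{d,b}^k$. Thus the corollary reduces to a one-line transport along the isometry.
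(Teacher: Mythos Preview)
Your proposal is correct and matches the paper's approach exactly: the paper states this corollary without a separate proof, treating it as an immediate consequence of Lemma~\ref{lem:tuckcover} via the $\ell^1\to L^1$ isometry $U_{d,b}$, just as you describe.
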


    The following lemma from \cite{ashtiani18} provides us with a way to choose good estimators from finite collections of densities. It can be proven by applying a Chernoff bound to \cite{devroye01}, Theorem 6.3.
    \begin{lem}[Thm 3.4 page 7 of \cite{ashtiani18}, Thm 3.6 page 54 of~\cite{devroye01}] \label{lem:densalg}
      There exists a deterministic algorithm that, given a collection of distributions $p_1,\ldots,p_M$, a parameter $\varepsilon >0$ and at least $\frac{\log \left(3M^2/\delta \right)}{2\varepsilon^2}$ iid samples from an unknown distribution $p$, outputs an index $j\in \left[M\right]$ such that
      \begin{align*} 
        \left\|p_j - p \right\|_1 \le 3 \min_{i \in \left[M\right]} \left\|p_i - p\right\|_1 + 4 \varepsilon
      \end{align*}
      with probability at least $1 - \frac{\delta}{3}$.
    \end{lem}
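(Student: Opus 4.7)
I would use the classical Scheff\'e--Yatracos minimum-distance tournament over $\{p_1,\ldots,p_M\}$, applying Hoeffding's inequality (a Chernoff bound for Bernoulli averages) to realize the stated sample complexity. The core objects are the Scheff\'e sets $A_{i,j} := \{x : p_i(x) > p_j(x)\}$, which depend only on the candidate collection, not on the unknown $p$. The key identity is $\tfrac{1}{2}\|\mu-\nu\|_1 = \mu(A) - \nu(A)$ for $A = \{\mu > \nu\}$, so each $A_{i,j}$ witnesses the $L^1$ distance between $p_i$ and $p_j$ exactly.

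\textbf{Concentration step.} Given $n$ iid samples from $p$, form the empirical measure $\hat p_n$. For each pair $i \neq j$, $\hat p_n(A_{i,j})$ is a mean of $n$ Bernoulli variables with expectation $p(A_{i,j})$, so Hoeffding yields $P(|\hat p_n(A_{i,j}) - p(A_{i,j})| > \varepsilon) \le 2\exp(-2n\varepsilon^2)$. Since $A_{j,i}$ is the complement of $A_{i,j}$, there are only $\binom{M}{2}$ distinct sets to union-bound over, so the event that every such deviation is at most $\varepsilon$ holds with probability at least $1 - M^2\exp(-2n\varepsilon^2) \ge 1 - \delta/3$ whenever $n \ge \log(3M^2/\delta)/(2\varepsilon^2)$. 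Call this the good event.

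\textbf{Selection step.} Define $\Delta_j := \max_{i \neq j} |\hat p_n(A_{i,j}) - p_j(A_{i,j})|$ and output $j^* \in \arg\min_j \Delta_j$, breaking ties by index. On the good event, let $i^* \in \arg\min_i \|p_i - p\|_1$ and set $\alpha := \tfrac{1}{2}\|p_{i^*}-p\|_1$. The TV identity gives $|p_{i^*}(A_{i,i^*}) - p(A_{i,i^*})| \le \alpha$ for every $i$, so combined with the concentration bound $\Delta_{i^*} \le \alpha + \varepsilon$ and hence $\Delta_{j^*} \le \alpha + \varepsilon$ by optimality. Applying both $\Delta_{j^*}\!\le\!\alpha+\varepsilon$ and the concentration bound at the specific set $A = A_{i^*, j^*}$, and chaining triangle inequalities against the bound $|p_{i^*}(A) - p(A)| \le \alpha$, yields $|p_{j^*}(A) - p_{i^*}(A)| \le 2\alpha + 2\varepsilon$. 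Because this $A$ is precisely the maximizing Scheff\'e set for the pair $(i^*, j^*)$, the identity turns the last inequality into $\|p_{j^*} - p_{i^*}\|_1 \le 4\alpha + 4\varepsilon$. A final triangle inequality against $p$ then produces $\|p_{j^*}-p\|_1 \le 6\alpha + 4\varepsilon = 3\|p_{i^*}-p\|_1 + 4\varepsilon$, as required.

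\textbf{Main obstacle.} Nothing here is conceptually deep; the delicate bit is constant-tracking. Landing on $4\varepsilon$ rather than a looser $6\varepsilon$ requires reusing the \emph{same} concentration event at $A_{i^*, j^*}$ in two places---once directly to transfer $\hat p_n$-mass to $p$-mass, and once implicitly through the minimality of $\Delta_{j^*}$---instead of invoking Hoeffding at two independent stages. Determinism of the algorithm is automatic since each $A_{i,j}$ is a function of the given $p_i$'s, $\hat p_n$ is a deterministic function of the sample, and the $\arg\min$ is resolved by index.
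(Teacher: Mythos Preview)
Your proposal is correct and is precisely the standard Yatracos--Scheff\'e tournament argument that the paper points to: the paper does not give its own proof of this lemma but simply cites \cite{ashtiani18} and remarks that it follows by ``applying a Chernoff bound to \cite{devroye01}, Theorem 6.3,'' which is exactly the Hoeffding-plus-minimum-distance-selection argument you have written out. Your constant-tracking is accurate and lands on the stated $3\cdot\text{OPT}+4\varepsilon$ with failure probability at most $\delta/3$ under the claimed sample size.
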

    We present the following asymptotic version of the previous lemma. We highlight the use of finding sufficiently slow rates on parameters in order to establish asymptotic results, a technique which we will use in later proofs.

    \begin{lem}\label{lem:finiteest}
      Let $\left( \sP_n\right)_{n \in \mathbb{N}}$ be a sequence of finite collections of densities in $\sD_d$ where $\left| \sP_n\right| \to \infty$ with $n/\log\left(\left|\sP_n\right|\right)\to \infty$. Then there exists a sequence of estimators $V_n \in \sP_n$ such that, for all $\gamma >0$,
      \begin{align*}
        \sup_{p \in \sD_d }P\left(\left\|V_n- p\right\|_1
        > 3 \min_{q\in \sP_n}\left\|p -q \right\|_1 + \gamma    \right) \to 0,
      \end{align*}
      where $V_n$ is a function of $X_1,\ldots,X_n \simiid p$.
    \end{lem}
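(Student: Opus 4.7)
The plan is to directly invoke Lemma~\ref{lem:densalg} with carefully chosen sequences $\varepsilon_n$ and $\delta_n$, both going to zero, in such a way that the sample-size requirement is satisfied for all sufficiently large $n$. The estimator $V_n$ will simply be the output of the Yatracos/Devroye--Lugosi style algorithm from Lemma~\ref{lem:densalg} run on $\sP_n$ with these choices, so it does not depend on $\gamma$; only the size of the deviation term $4\varepsilon_n$ will be compared against $\gamma$ in the end.

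Concretely, I would set $\delta_n := 1/n$ and
\begin{equation*}
\varepsilon_n := \sqrt{\frac{\log\!\bigl(3|\sP_n|^2/\delta_n\bigr)}{2n}} = \sqrt{\frac{\log 3 + 2\log|\sP_n| + \log n}{2n}}.
\end{equation*}
The first key observation is that $\varepsilon_n \to 0$: indeed, $\log|\sP_n|/n \to 0$ by the hypothesis $n/\log|\sP_n| \to \infty$, and $\log n/n \to 0$ trivially. The second observation is that, by the very definition of $\varepsilon_n$, the inequality $n \ge \log(3|\sP_n|^2/\delta_n)/(2\varepsilon_n^2)$ holds with equality, so Lemma~\ref{lem:densalg} applies to give a (deterministic) algorithm whose output $V_n \in \sP_n$, as a function of $X_1,\ldots,X_n \simiid p$, satisfies
\begin{equation*}
P\!\left(\|V_n - p\|_1 \le 3 \min_{q \in \sP_n}\|p - q\|_1 + 4\varepsilon_n\right) \ge 1 - \delta_n/3
\end{equation*}
for every $p \in \sD_d$, with the bound on the failure probability not depending on $p$.

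Finally, fix any $\gamma > 0$. Since $4\varepsilon_n \to 0$, there exists $N = N(\gamma)$ such that $4\varepsilon_n < \gamma$ for all $n \ge N$. For such $n$, the event $\{\|V_n - p\|_1 > 3\min_{q \in \sP_n}\|p-q\|_1 + \gamma\}$ is contained in the event $\{\|V_n - p\|_1 > 3\min_{q \in \sP_n}\|p-q\|_1 + 4\varepsilon_n\}$, which has probability at most $\delta_n/3 = 1/(3n)$. Taking the supremum over $p \in \sD_d$ yields
\begin{equation*}
\sup_{p \in \sD_d} P\!\left(\|V_n - p\|_1 > 3\min_{q\in \sP_n}\|p - q\|_1 + \gamma\right) \le \frac{1}{3n} \to 0,
\end{equation*}
as required.

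There is no significant obstacle: the whole argument is a packaging exercise around Lemma~\ref{lem:densalg}. The only point requiring mild care is the joint choice of $\varepsilon_n$ and $\delta_n$ — one must ensure both go to $0$ while the sample-size hypothesis of Lemma~\ref{lem:densalg} is met — and this is handled by setting $\varepsilon_n$ to be exactly the threshold dictated by the lemma (with $\delta_n=1/n$), then using the assumption $\log|\sP_n|=o(n)$ to conclude $\varepsilon_n\to 0$. The uniformity over $p \in \sD_d$ is inherited directly from Lemma~\ref{lem:densalg}, which already produces a distribution-free bound.
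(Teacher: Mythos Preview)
Your proof is correct and follows essentially the same approach as the paper: invoke Lemma~\ref{lem:densalg} with sequences $\varepsilon_n,\delta_n\to 0$ chosen so that the sample-size condition is met, then use the uniformity of that lemma over $p$. Your parameter choice (setting $\varepsilon_n$ so the sample-size inequality holds with equality, $\delta_n=1/n$) is in fact a bit more direct than the paper's, which introduces an auxiliary sequence $C(n)\to\infty$ and defines $\varepsilon,\delta$ in terms of $C$; but the underlying strategy is identical.
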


    \begin{proof}[Proof of Lemma \ref{lem:finiteest}]
      Let $M = M(n)= \left| \sP_n\right|$. Since $n/\log\left(M\right) \to \infty$ we have that for all $c>0$ there exists a $N_c$ such that, for all $n\ge N_c$ we have $n/\log\left(M\right) \ge c $ or equivalently $n\ge c\log\left(M\right)$. Because of this there exists sequence of positive values $C = C(n)$ such that $C \to \infty$ and $n \ge C\log\left(M\right)$.

      We will be making use of the algorithm in Lemma \ref{lem:densalg} as well as its notation. If we can show that there exist sequences of positive values $\varepsilon(n)\to 0, \delta(n)\to 0$ such that, for sufficiently large $n$, the following holds
      \begin{align*}
        \frac{\log \left(3M^2/\delta \right)}{2\varepsilon^2}\le n,
      \end{align*}
      then can simply set $V_n$ equal to be the estimator from Lemma \ref{lem:densalg} for sufficiently large $n$ and, because the lemma holds independent of choice of $p$, the theorem statement follows.

      Let $\varepsilon = \left(2/C \right)^{1/4}$ and $\delta = 3/\left(\exp\left(2 \sqrt{\frac{C}{2}} \right) \right)$. Note that these are both positive sequences which converge to zero. Now we have
      \begin{align}
        &\frac{\log\left(3M^2/\delta\right)}{2 \varepsilon^2}\notag = \frac{\log\left(M^2\right) + \log\left(3/\delta \right)}{2 \varepsilon^2}\\
        & = \frac{2\log\left(M\right) + \log\left(3/\delta \right)}{2 \varepsilon^2}\notag = \frac{\log\left(M\right) + \frac{1}{2}\log\left(3/\delta \right)}{\varepsilon^2}\notag\\
        & = \varepsilon^{-2}\left(\log\left(M\right) + \frac{1}{2}\log\left(3/\delta \right)\right)\notag\\
        &= \left(\left(2/C \right)^{1/4}\right)^{-2}\left(\log\left(M\right) + \frac{1}{2}\log\left(\exp\left(2 \sqrt{\frac{C}{2}} \right) \right) \right)\notag\\
        & = \sqrt{\frac{C}{2}} \left(\log(M)+ \sqrt{\frac{C}{2}} \right) = \sqrt{\frac{C}{2}} \log(M)+ \frac{C}{2}\label{eqn:finiteest}.
      \end{align}
      For sufficiently large $C$ and $M$ we have that the RHS of (\ref{eqn:finiteest}) is less than or equal to
      \begin{align*}
        \frac{C}{2} \log(M)+ \frac{C}{2}
        &\le \frac{C}{2} \log(M)+ \frac{C}{2}\log(M)	\\
        &= C\log(M) \le n.
      \end{align*}
      which completes our proof.
    \end{proof}

    \subsection{Theoretical Basics Proofs} \label{appx:proofs}
    \textbf{All norms are either the $\ell^1$, $L^1$, or total variation norm}, which are equivalent with respect to our analysis and the proper norm will be clear from context.

    \begin{proof}[Proof of Lemma \ref{lem:histcover}]
      In Section 7.4 from \cite{devroye01}, the authors show that for any collection of measures $\mu_1,\ldots,\mu_b$, for all $\varepsilon >0$, that 
      \begin{align*}
        N\left( \conv\left(\left\{ \mu_1,\ldots, \mu_b \right\}\right), \varepsilon  \right) \le \left(b + \frac{b}{\varepsilon} \right)^b.
      \end{align*}
      With the additional assumption that $\varepsilon \le 1$ we have that $b + \frac{b}{\varepsilon} \le\frac{b}{\varepsilon}+\frac{b}{\varepsilon} =  \frac{2b}{\varepsilon}$ and thus
      \begin{align*}
        N\left( \conv\left(\left\{ \mu_1,\ldots, \mu_b \right\}\right)\right)\le \left(\frac{2b}{\varepsilon} \right)^b.
      \end{align*}
      If we let $\mu_i = \be_i$, the indicator vector at index $i$, then the lemma follows.
    \end{proof}

    \begin{proof}[Proof of Lemma \ref{lem:simplecover}]
      From Lemma \ref{lem:histcover} we know there exists a finite collection of probability vectors $\tsP$ such that $\tsP$ is an $\varepsilon/d$-covering of $\Delta_b$ and $\left| \tsP\right| \le \left(\frac{2bd}{\varepsilon} \right)^b$.
      Note that the set $\left\{\tp_1\otimes\dots\otimes \tp_d \mid \tp_i \in \tsP \right\}$
      contains at most $\left( \left(\frac{2bd}{\varepsilon} \right)^b\right)^d = \left(\frac{2bd}{\varepsilon} \right)^{bd}$ elements. We will now show that this set is an $\varepsilon$-cover of $\sT_{d,b}^1$.
      Let $p_1\ot \cdots \ot p_d \in \sT_{d,b}^1$ be arbitrary. From our construction of $\tsP$ there exist elements $\tp_1,\ldots,\tp_d \in \tsP$ such that $\left\|p_i - \tp_i\right\|_1 \le \frac{\varepsilon}{d}$.

      We will now make use of Lemma 3.3.7 in \cite{reiss89}, which states that, for any collection of probability vectors $q_1,\ldots, q_d$ and $\tq_1,\ldots, \tq_d$, the following holds
      \begin{align*}
        \left\|\prod_{i=1}^d q_i -\prod_{j=1}^d \tq_j\right\|_1 \le \sum_{i=1}^d \left\|q_i- \tq_i \right\|_1.
      \end{align*}
      From this it follows that
      \begin{align*}
        \left\| \prod_{i=1}^d p_i - \prod_{j=1}^d \tp_j \right\|_1
        \le \sum_{i=1}^d \left\|p_i - \tp_i\right\|_1
        \le d \frac{\varepsilon}{d}
        = \varepsilon
      \end{align*}
      thus completing our proof.
    \end{proof}

    \begin{proof}[Proof of Lemma \ref{lem:mixcover}]
      Let $\tsP$ be the finite collection of probability measures with $|\tsP| = N\left(\sP,\varepsilon\right)$ which $\varepsilon$-covers $\sP$. Similarly let $W \subset \Delta_k$ with $|W| = N\left(\Delta_k,\delta\right)$  such that $W$ is a $\delta$-cover of $\Delta_k$. Consider the set 
      \begin{align*}
        \Omega = \left\{ \sum_{i=1}^k \tw_i \tp_i \middle| \tw \in W, \tp_i \in \tsP   \right\}.
      \end{align*}
      Note that this set contains at most $N\left(\sP,\varepsilon\right)^k N\left(\Delta_k,\delta\right)$ elements. We will now show that it $\left(\delta + \varepsilon\right)$-covers $k\mix\left(\sP\right)$, which completes the proof. Let $\sum_{i=1}^k p_i w_i \in k\operatorname{-mix}\left(\sP\right)$. We know there exists elements $\tp_1,\ldots,\tp_k \in \tsP$ such that $\left\|\tp_i - p_i\right\|_1\le \varepsilon$ and $\tw \in W$ such that $\left\| w - \tw \right\|_1 \le \delta$ and thus $\sum_{i=1}^k \tp_i \tw_i \in \Omega$. Now observe that
      \begin{align*}
        \left\|\sum_{i=1}^k \tp_i \tw_i - \sum_{j=1}^{k} p_j  w_j\right\|_1
        &= \left\|\sum_{i=1}^k \tp_i \tw_i - \sum_{j=1}^k p_j \tw_j + \sum_{l=1}^k p_l \tw_l - \sum_{r=1}^{k} p_r  w_r\right\|_1 \\
        &\le \left\|\sum_{i=1}^k \left(\tp_i - p_i\right) \tw_i     \right\|_1+ \left\| \sum_{i=1}^k p_i \left(\tw_i -  w_i\right)\right\|_1 \\
        &\le \sum_{i=1}^k \tw_i \left\| \tp_i - p_i \right\|_1+  \sum_{i=1}^k  \l|\tw_i -  w_i\r| \\
        &\le \sum_{i=1}^k \tw_i \varepsilon + \left\|\tw -  w\right\|_1 \\
        &\le \varepsilon + \delta.
      \end{align*}
    \end{proof}

    \begin{proof}[Proof of Lemma \ref{lem:lrcover}]
      Note that $\sT_{d,b}^k = k\operatorname{-mix}\left(\sT_{d,b}^1\right)$.
      Applying Lemma \ref{lem:mixcover} followed by Lemmas \ref{lem:histcover} and \ref{lem:simplecover} we have that
      \begin{align*}
        N\left(\sT_{d,b}^k, \varepsilon \right) 
        \le N\left(\sT_{d,b}^1, \varepsilon/2 \right)^k N\left(\Delta_k, \varepsilon/2 \right)
        \le \left(\frac{4bd}{\varepsilon} \right)^{bdk} \left( \frac{4k}{\varepsilon}\right)^k.
      \end{align*}
    \end{proof}

    \begin{proof}[Proof of Lemma \ref{lem:tuckcover}]
      Fix $k,d,b$ and $0<\varepsilon \le 1$. We are going to construct an $\varepsilon$-cover of $\tsT_{d,b}^k$.
      From Lemma \ref{lem:histcover} we know that there exists a set $\sB \subset \Delta_b$ which $\left(\frac{\varepsilon}{2d}\right)$-covers of $\Delta_b$ and contains no more than $\left(\frac{4bd}{\varepsilon}\right)^b$ elements. Let $\sP$ be the collection of all $d\times k$ arrays whose entries are elements from $\sB$. So we have that 
      \begin{align*}
        \left|\sP\right| = \left|\sB\right|^{dk}\le \left(\frac{4bd}{\varepsilon}\right)^{bdk}.
      \end{align*}

      From Lemma \ref{lem:histcover} there exists $\sW$ which is an $\varepsilon/2$-cover of $\sT_{d,k}$ and contains no more than $\left(4k^d/\varepsilon\right)^{\left(k^d\right)}$ elements. Now let
      \begin{align*}
        \sL_{d,b}^k =  \left\{ \sum_{S \in [k]^d} \tW_S \prod_{i=1}^d \tp_{i,S_i} \middle| \tW \in \sW, \tp\in \sP \right\}.
      \end{align*}
      Note that 
      \begin{align*}
        \left| \sL_{d,b}^k\right| \le \left|\sW\right| \left|\sP\right| \le \left(\frac{4k^d}{\varepsilon} \right)^{k^d} \left(\frac{4bd}{\varepsilon}\right)^{bdk}.
      \end{align*}
      We will now show that $\sL_{d,b}^k$ is an $\varepsilon$-cover of $\tsT_{d,b}^k$. To this end let $\sum_{S\in \left[k\right]^d} W_S \prod_{i=1}^{d}p_{i,S_i} \in \tsT_{d,b}^k$ be arbitrary, where $W\in \sT_{d,k}$ and $p_{i,j} \in \Delta_b$. From our construction of $\sW$, there exists $\tW \in \sW$ such that $\left\|W - \tW\right\|_1 \le \varepsilon/2$. There also exists $\tp \in \sP$ such that $\left\|\tp_{i,j} - p_{i,j}\right\|_1 \le \varepsilon/2$ for all $i,j$. Therefore we have that
      \begin{align*}
        \sum_{S \in \left[k\right]^d} \tW_S \prod_{i=1}^d \tp_{i,S_i} \in \sL_{d,b}^k.
      \end{align*}
      So finally 
      \begin{align*}
        &\left\|\sum_{S\in \left[k\right]^d} W_S \prod_{i=1}^{d}p_{i,S_i} - \sum_{R \in \left[k\right]^d} \tW_R \prod_{j=1}^d \tp_{j,R_j} \right\|_1\\
        &\le \left\|\sum_{S\in \left[k\right]^d} W_S \prod_{i=1}^{d}p_{i,S_i} - \sum_{R \in \left[k\right]^d} W_R \prod_{j=1}^d \tp_{j,R_j} \right\|_1
        +\left\|\sum_{S \in \left[k\right]^d} W_S \prod_{i=1}^d \tp_{i,S_i} - \sum_{R \in \left[k\right]^d} \tW_R \prod_{j=1}^d \tp_{j,R_j} \right\|_1\\
        &\le\sum_{S\in \left[k\right]^d}W_S \left\|  \prod_{i=1}^{d}p_{i,S_i} - \prod_{j=1}^d \tp_{j,S_j} \right\|_1
        +\sum_{R \in \left[k\right]^d} |W_R- \tW_R|\left\|   \prod_{j=1}^d \tp_{j,R_j} \right\|_1\\
        &\le\sum_{S\in \left[k\right]^d}W_S \sum_{i=1}^d\left\| p_{i,S_i} - \tp_{i,S_i} \right\|_1
        +\sum_{R \in \left[k\right]^d} |W_R- \tW_R|\left\|   \prod_{j=1}^d \tp_{j,R_j} \right\|_1\\
        &\le\sum_{S\in \left[k\right]^d}W_S \frac{\varepsilon}{2}
        +\left\|W - \tW\right\|_1\\
        &\le \varepsilon/2+ \varepsilon/2 = \varepsilon.
      \end{align*}

    \end{proof}

    \newpage
    \section{Asymptotic Theoretical Results} \label{appx:asy}
    This section contains results related to the asymptotic results related to the growth of model parameters with respect to the number of training samples. It corresponds to Section 2.2.1 in the main text.
    \begin{proof}[Proof of Theorem \ref{thm:genrate}]
      We will be applying the estimator from Lemma \ref{lem:finiteest} to a series of $\delta$-covers of $\sH_{d,b}^k$. We begin by constructing a series of $\delta$-covers whose cardinality doesn't grow too quickly. Corollary \ref{cor:lrcover} states that, for all $0< \delta \le 1$, that $N\left(\sH_{d,b}^k,\delta\right) \le \left( \frac{4bd}{\delta} \right)^{bdk} \left( \frac{4k}{\delta}\right)^k$. \emph{For sufficiently large} $b$ and $k$ and \emph{sufficiently small} $\delta$, the following holds
      \begin{align}
        \log \left(\left(\frac{4bd}{\delta} \right)^{bdk} \left( \frac{4k}{\delta}\right)^k \right)\notag
        & =bdk \log\left(\frac{4bd}{\delta}\right) + k \log\left(\frac{4k}{\delta} \right)\notag\\
        & = bdk\left[\log \left(b\right) + \log\left(\frac{4d}{\delta}\right)\right] + k \left[\log\left(k\right) + \log\left(\frac{4}{\delta} \right)\right]\notag\\
        & \le bdk\left[\log \left(b\right) + \log\left(b\right)\log\left(\frac{4d}{\delta}\right)\right] + dk \left[\log\left(k\right) + \log \left(k\right)\log\left(\frac{4d}{\delta} \right)\right]\notag\\
        & = \left(bk \log\left( b\right) + k\log\left(k\right)\right) d\left( 1+\log \left(\frac{4d}{\delta}\right) \right) .\label{eqn:same}
      \end{align}
      Using the argument from the proof of Lemma \ref{lem:finiteest} we have that, because  $n/(bk\log(b) + k\log(k)) \to \infty$ there exists a sequence of positive values $C = C(n)$ such that $C \to \infty$ and $n>C\left[bk\log(b) + k\log(k)\right]$. If we let $\delta = \frac{4d}{\exp\left(\frac{C}{d}-1\right)}$ we have that $\delta \to 0$ and
      \begin{align*}
        \left(bk \log\left( b\right) + k\log\left(k\right)\right) d\left( 1+\log \left(\frac{4d}{\delta}\right) \right) \le n.
      \end{align*}
      Because of this we can construct collections of densities $\tsP_n\subset \sH_{d,b}^k$ such that $\tsP_n$ is a $\delta$-covering of $\sH_{d,b}^k$ with $\l| \tsP\r| \to \infty$, $n/\log\l|\tsP_n\r| \to \infty$ and $\delta \to 0$. Let $V_n$ be the estimator from Lemma \ref{lem:finiteest} applied to the sequence $\tsP_n$.

      Let $\varepsilon >0$ be arbitrary. Due to the way that we have constructed the sequence $\tsP_n$, for sufficiently large $n$, we have that $3\sup_{q \in \sH_{d,b}^k} \min_{\tq \in \tsP_n}\left\|q - \tq \right\|_1 \le \varepsilon/2$. It therefore follows that, for sufficiently large $n$, the following holds for all $p \in \sD_d$
      \begin{align*}
        3 \min_{q \in \sH_{d,b}^k} \left\|p-q\right\|_1 + \varepsilon
        &\ge 3 \min_{q \in \sH_{d,b}^k} \left\|p-q\right\|_1 + 3\sup_{q \in \sH_b^k} \min_{\tq \in \tsP_n}\left\|q - \tq \right\|_1+ \varepsilon/2 \\
        &\ge 3 \min_{q \in \sH_{d,b}^k}\left[ \left\|p-q\right\|_1 + \min_{\tq \in \tsP_n}\left\|q - \tq \right\|_1\right]+ \varepsilon/2 \\
        &= 3 \min_{q \in \sH_{d,b}^k}\min_{\tq \in \tsP_n} \left\|p-q\right\|_1 + \left\|q - \tq \right\|_1+ \varepsilon/2 \\
        &\ge 3 \min_{\tq \in \tsP_n} \left\|p- \tq \right\|_1+ \varepsilon/2.
      \end{align*}
      From this we have that, for sufficiently large $n$
      \begin{align*}
        \sup_{p \in \sD_d }P\left(\left\|V_i - p\right\|_1 > 3 \min_{q\in \sH_{d,b}^k}\left\|p -q \right\|_1 + \varepsilon  \right)
        \le \sup_{p \in \sD_d }P\left(\left\|V_i - p\right\|_1 > 3 \min_{\tq \in \tsP_n} \left\|p- \tq \right\|_1+ \varepsilon/2  \right)
      \end{align*}
      and the right side goes to zero due to Lemma \ref{lem:finiteest}, thus completing the proof.
    \end{proof}

    \begin{proof}[Proof of Theorem \ref{thm:tuckrate}]
      This proof is very similar to the proof of Theorem \ref{thm:genrate}. We will be applying the estimator from Lemma \ref{lem:finiteest} to a series of $\delta$-covers of $\tsH_{d,b}^k$. We begin by constructing a series of $\delta$-covers whose cardinality doesn't grow too quickly. Corollary \ref{cor:tuckcover} states that, for all $0< \delta \le 1$, that $N\left(\tsH_{d,b}^k,\delta\right) \le\left(\frac{4bd}{\delta} \right)^{bdk} \left( \frac{4k^d}{\delta}\right)^{k^d}$. \emph{For sufficiently large} $b$ and $k$ and \emph{sufficiently small} $\delta$, the following holds
      \begin{align*}
        \log \left(\left(\frac{4bd}{\delta} \right)^{bdk} \left(\frac{4k^d}{\delta} \right)^{k^d} \right)
        &=bdk \log \left(\frac{4bd}{\delta}\right) + k^d \log \left(\frac{4k^d}{\delta} \right)\\
        &\le d\left(bk \log \left(\frac{4bd}{\delta}\right) + k^d \log \left(\frac{4k^d}{\delta}\right) \right)\\
        &= d\left(bk \left(\log(b) + \log \left(\frac{4d}{\delta}\right)\right) + k^d \left( \log\left(k^d\right) + \log\left(\frac{4}{\delta}\right)  \right) \right)\\
        &\le d\left(bk \left(\log(b)+ \log \left(\frac{4d}{\delta}\right)\right) + k^d \left( \log\left(k^d\right) + \log\left(\frac{4d}{\delta}\right)  \right) \right)\\
        &= \left(bk \log(b) + k^d  \log\left(k^d\right)  \right)d\left(1 + \log \left(\frac{4d}{\delta}\right)\right).
      \end{align*}
      Note that replacing $bk \log\left(b\right) + k \log\left(k\right)$ with $bk \log \left(b\right)  + k^d  \log\left(k^d\right)$ in the last line is exactly (\ref{eqn:same}) in our proof of Theorem \ref{thm:genrate} . From here we can proceed exactly as in the proof of Theorem \ref{thm:genrate} by replacing $\sH_{d,b}^k$ with $\tsH_{d,b}^k$ and $bk \log\left(b\right) + k \log\left(k\right)$ with $bk \log \left(b\right)  + k^d  \log\left(k^d\right)$.
    \end{proof}
    
    \begin{proof}[Proof of Corollary \ref{cor:fastbin}]
        This follows directly from Theorems \ref{thm:genrate} and \ref{thm:tuckrate} and selecting an appropriately slow rate for $k \to \infty$.
    \end{proof}

    \begin{proof}[Proof of Lemma \ref{lem:lrbias}]
      Let $\varepsilon >0$. Theorem 5 in Chapter 2 of \cite{gyorfi85}\footnote{See p. 20 in this text for the application to histograms.} states that, for any $p \in \sD_d$, that $\min_{h\in \sH_{d,b}}\left\|p-h\right\|_1 \to 0$ as $b\to \infty$, i.e. the bias of a histogram estimator goes to zero as the number of bins per dimension goes to infinity. Thus there exists a sufficiently large $B$ such that there exists a histogram $h \in \sH_{d,B}$ which is a good approximation of $p$, $\left\|p - h \right\|_1 <\varepsilon/2$. In this proof we we will argue that once $k\ge B^d$ and $b$ is sufficiently large, we can find an element of $\sH_{d,b}^k$ where the multi-view components can approximate the $B^d$ bins of $h$.

      We have that, for some $w\in \sT_{d,B}$
      \begin{align*}
        h 
        &= \sum_{A \in \left[B\right]^{ d}} w_A h_{d,B,A}.
      \end{align*}
      From the same theorem in \cite{gyorfi85} there exists $a_0$ such that, for all $a\ge a_0$,  for all $i$, there exists  $\th_{1,a,i} \in \sH_{1,a}$ such that $\left\|h_{1,B,i} -\th_{1,a,i} \right\|_1 < \varepsilon/(2d)$ for all $i\in [B]$. For any multi-index $A \in [B]^d$, we define
      \begin{align*}
        \th_{d,a,A} = \prod_{j=1}^d \th_{1,a,A_j}.
      \end{align*}
      Now we have that, for all $a \ge a_0$ and $A \in [B]^d$,
      \begin{align}
        \left\|h_{d,B,A} - \th_{d,a,A}\right\|_1
        & = \left\| \prod_{i=1}^d h_{1,B,A_i} - \prod_{j=1}^d \th_{1,a,A_j}\right\|_1\notag \\
        & \le \sum_{i=1}^d \left\|h_{1,B,A_i} - \th_{1,a,A_i} \right\|_1\label{eqn:supp}\\
        & \le d \frac{\varepsilon}{2d} \notag\\
        & = \varepsilon/2,\notag
      \end{align}
      where we use the previously mentioned product measure inequality for (\ref{eqn:supp}).
      As soon as $k\ge B^d$ and $a\ge a_0$ the set $\sH_{d,a}^k$ contains the element,
      \begin{align*}
        \th \triangleq \sum_{A \in \left[B\right]^{ d}} w_A \th_{d,a,A}.
      \end{align*}
      Now we have that, for all $a\ge a_0$.
      \begin{align*}
        \left\|h - \th\right\|_1
        & = \left\|\sum_{A \in \left[B\right]^{ d}} w_A h_{d,B,A} - \sum_{Q \in \left[B\right]^{ d}} w_Q \th_{d,a,Q}  \right\|_1\\
        & \le \sum_{A \in \left[B\right]^{ d}}w_A \left\|  h_{d,B,A} -  \th_{d,a,A}  \right\|_1\\
        & \le \varepsilon/2.
      \end{align*}
      From the triangle inequality we have that 
      \begin{align*}
        \left\|p - \th \right\|_1 \le 
        \left\|p- h \right\|_1 + \left\| h- \th\right\|_1 \le\varepsilon.
      \end{align*}
      So we have that, for sufficiently large $b$ and $k$ 
      \begin{align*}
        \min_{q \in \sH_{d,b}^k} \left\| p - q \right\|_1 \le \varepsilon
      \end{align*}
      which completes our proof.
    \end{proof}

    \begin{proof}[Proof of Lemma \ref{lem:tuckbias}]
      We will show that $\sH_{d,b}^k \subset \tsH_{d,b}^k$ and the lemma clearly follows due to Lemma \ref{lem:lrbias}. Any element of $\sH_{d,b}^k$ will have the following representation
      \begin{align}
        \sum_{i=1}^k w_i \prod_{j=1}^d f_{i,j}: w \in\Delta_k, f_{i,j} \in \sH_{1,b}.\label{eqn:tuckbias}
      \end{align}
      Letting $W \in \sT_{d,k}$ with $W_{i,\ldots,i} = w_i$ for all $i$, the rest of the entries of $W$ be zero, and $\tf_{j,i} = f_{i,j}$ for all $i,j$ we have that 
      \begin{align*}
        \sum_{S\in [k]^d} W_S \prod_{j=1}^d \tf_{j,S_j}
        &= \sum_{i=1}^k W_{i,\ldots,i}\prod_{j=1}^d \tf_{j,i}\\
        &= \sum_{i=1}^k w_i \prod_{j=1}^d f_{i,j}
      \end{align*}
      so we have that (\ref{eqn:tuckbias}) is an element of $\tsH_{d,b}^k$ and we are done.
    \end{proof}

    \begin{proof}[Proof of Theorem \ref{thm:lower}]
      We will proceed by contradiction. Suppose $V_n$ is an estimator violating the theorem statement, i.e. there exist sequences $b\to \infty$ and $k\to \infty$ with $n/\left(bk\right) \to 0$ and $b\ge k$ such that, for all $\varepsilon >0$,
      \begin{align*} 
        \sup_{p \in \sD_d }P\left(\left\|V_n - p\right\|_1 > 3 \min_{q\in \sH_{d,b}^k}\left\|p -q \right\|_1 + \varepsilon  \right) \to 0.
      \end{align*}

      Let $\left(p_n\right)_{n=1}^\infty$ be a sequence of probability vectors $p_n \in \Delta_{b(n)\times k(n)}$ which represent distributions over $\left[b(n)\right] \times \left[k(n)\right]$. Let $\sX_n \triangleq  \left(X_{n,1},\ldots,X_{n,n} \right)$ with $X_{n,1},\ldots,X_{n,n} \simiid p_n$.

      We will now construct a series of estimators for $p_n$ using $V_n$. Let $\tsX_n = \left(\tX_{n,1},\ldots, \tX_{n,n} \right)$ which are independent random variables with $\tX_{n,i} \sim h_{d,b,\left(X_{n,i},1,\ldots,1\right)}$, so $\tX_{n,i}$ is uniformly distributed over the bin designated by $X_{n,i},1,\ldots,1$. For this proof we will assume $d > 2$ but the proof can be simplified in a straightforward manner to the $d=2$ case by ignoring the indices and modes beyond the second. Note that that $X_{n,i}$ contains two indices.  Now we have the following for the densities of $\tX_{n,i}$
      \begin{align}
        p_{\tX_{n,i}} \notag
        &= \sum_{(j,\ell) \in \left[b\right] \times \left[k\right]} p_{\tX_{n,i} | X_{n,i} = \left(j,\ell\right)} P(X_{n,i} = \left(j,\ell\right))\\\notag
        &= \sum_{(j,\ell) \in \left[b\right] \times \left[k\right]} h_{d,b,\left(j,\ell,1,\ldots, 1\right)}  p_{n}\left(j,\ell\right)\\\notag
        &= \sum_{ \ell \in \left[k\right]} \sum_{ j \in \left[b\right]} h_{d,b,\left(j,\ell,1,\ldots, 1\right)}  p_{n}\left(j,\ell\right)\\\notag
        &= \sum_{ \ell \in \left[k\right]} \sum_{ j \in \left[b\right]}p_{n}\left(j,\ell\right) h_{1,b,j} \otimes h_{1,b,\ell} \otimes \prod_{a \in [d-2]}h_{1,b,1}\notag  \\
        &= \sum_{ \ell \in \left[k\right]} \left(\sum_{ j \in \left[b\right]}p_{n}\left(j,\ell\right) h_{1,b,j}\right) \otimes h_{1,b,\ell} \otimes \prod_{a \in [d-2]}h_{1,b,1} \label{eqn:histform} \\
        &= \sum_{ \ell \in \left[k\right]}\left(\sum_{ q \in \left[b\right]}p_{n}\left(q,\ell\right)\right) \left(\sum_{ j \in \left[b\right]}\frac{p_{n}\left(j,\ell\right)}{\sum_{ q \in \left[b\right]}p_{n}\left(q,\ell\right)} h_{1,b,j}\right) \otimes h_{1,b,\ell} \otimes \prod_{a \in [d-2]}h_{1,b,1}. \label{eqn:khistform}
      \end{align}
      For \eqref{eqn:khistform} we let $0/0$ be equal to zero as is common for discrete conditioning. This last line is in the form of (\ref{eqn:lrhist}) in the main text and is thus an element of $\sH_{d,b}^k$. To see this we will show the correspondence between the terms in (\ref{eqn:khistform}) from here and the terms in (\ref{eqn:lrhist}) in the main text:
      \begin{align*}
        w_\ell &:= \left(\sum_{ q \in \left[b\right]}p_{n}\left(q,\ell\right)\right)\\
        f_{\ell,1} &:= \left(\sum_{ j \in \left[b\right]}\frac{p_{n}\left(j,\ell\right)}{\sum_{ q \in \left[b\right]}p_{n}\left(q,\ell\right)} h_{1,b,j}\right) \\
        f_{\ell,2} &:= h_{1,b,\ell}\\
        f_{i,j} &:= h_{1,b,1}, \forall j>2, \forall i.
      \end{align*}
      Let $V_n$ estimate $\tP_n \triangleq p_{\tX_{n,i}}$ so $\tX_{n,1}, \ldots , \tX_{n,n} \simiid \tP_n$. We will use $V_n$ to construct an estimator $v_n$ for $p_n$.

      Because $\tP_n \in \sH_{d,b}^k$ \footnote{We will use this portion of the proof again for our proof of Theorem \ref{thm:tucklower} \label{fn:lower}} for all $n$ and our contradiction hypothesis we have that $\left\|V_n - \tP_n\right\|_1 \cip 0$. From this it follows that $\left\| U_{d,b}^{-1}(V_n) - U_{d,b}^{-1}(\tP_n)\right\|_1\cip 0 $. Note that $\left[U_{d,b}^{-1}(\tP_n)\right]_{j,\ell,A} = p_n(j,\ell)$ when $A=\left(1,\ldots,1\right)$ and zero otherwise (see (\ref{eqn:histform})). We define the linear operator $B_n: \sT_{d,b} \to \Delta_{b\times k}$ as \sloppy

      \begin{align*}
        \left[B_n(T)\right]_{j,\ell} \triangleq \sum_{A \in \left[b\right]^{ d - 2}} T_{j,\ell, A}
      \end{align*}
      i.e. the linear operator which sums out all modes except for the first two.
      We have that $B_n(U_{d,b}^{-1}(\tP_n)) = p_n$. Now let $v_n = B_n(U_{d,b}^{-1}(V_n))$ be the estimator for $p_n$. Now we have that
      \begin{equation*} \label{eqn:lower}
        \left\|v_n - p_n\right\|_1
        = \left\|B_n(U_{d,b}^{-1}(\tP_n)) - B_n(U_{d,b}^{-1}(V_n)) \right\|_1= \left\|B_n(U_{d,b}^{-1}(\tP_n - V_n)) \right\|_1 .
      \end{equation*}
      We have that $B_n$ is an $\ell^1$-nonexpansive operator due to the triangle inequality,
      \begin{align*}
        \left\|B_n\left(T\right)\right\|_1
        = \sum_{j,l} \l|\sum_{A \in \left[b\right]^{ d - 2}} T_{j,\ell, A}  \r| \le \sum_{j,l}\sum_{A \in \left[b\right]^{ d - 2}} \l| T_{j,\ell, A}  \r|
        = \left\|T\right\|_1,
      \end{align*}
      so the operator norm of $B_n$ is less than or equal to one. We also know that $U^{-1}_{d,b}$ an isometry and $\left\|\tP_n - V_n\right\|_1\cip 0$, so it follows that $\left\|v_n - p_n\right\|_1\cip 0$ for any sequence of $p_n\in \Delta_{\left[b(n)\right] \times \left[k(n)\right]}$. We will now use following theorem from \cite{han15} to show that no such estimator $v_n$ can exist.

      \begin{thm}[\cite{han15} Theorem 2.]
        For any $\zeta \in \left(0,1\right]$, we have
        \begin{align*}
          &\inf_{\hat{p}} \sup_{p\in \Delta_a} \E_p\left\|\hat{p}-p\right\|_1 \ge 
          \frac{1}{8} \sqrt{\frac{ea}{\left(1+\zeta\right)n}} \1 \left( \frac{\left(1 + \zeta\right)n}{a}>\frac{e}{16}\right)\\
          &\quad+ \exp\left(- \frac{2\left(1 + \zeta\right)n}{a}\right) \1 \left( \frac{\left(1 + \zeta\right)n}{a}\le\frac{e}{16}\right)
          - \exp\left(-\frac{\zeta^2n}{24}\right) - 12 \exp\left(- \frac{\zeta^2 a}{32 \left(\log a \right)^2}\right)
        \end{align*}
        where the infimum is over all estimators.
      \end{thm}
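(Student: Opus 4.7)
The plan is to prove a minimax lower bound for discrete-alphabet $\ell^1$ estimation, which is a classical Le Cam / Assouad-style argument with a poissonization step to handle sample-size fluctuation; the parameter $\zeta$ is exactly the slack that converts fixed-$n$ sampling into a Poisson-$((1+\zeta)n)$ model, and the two indicator terms split the proof into a ``sample-rich'' and a ``sample-poor'' regime. I would prove each regime's lower bound by constructing a hard prior, running Le Cam or Assouad against it, and then absorbing the conversion costs into the two subtracted exponential corrections $\exp(-\zeta^2 n/24)$ and $12\exp(-\zeta^2 a/(32(\log a)^2))$.

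First I would poissonize: instead of observing exactly $n$ iid draws from $p$, consider observing $N\sim\mathrm{Poisson}((1+\zeta)n)$ iid draws. Under this model the coordinate counts $(N_1,\ldots,N_a)$ are independent with $N_i\sim\mathrm{Poisson}((1+\zeta)np_i)$, which decouples the per-atom inference problem and legitimises product priors. A Chernoff bound gives $P(N<n)\le\exp(-\zeta^2 n/(2(1+\zeta)))\le\exp(-\zeta^2 n/4)$ for $\zeta\in(0,1]$, and the standard coupling argument shows that the fixed-$n$ minimax risk dominates the Poissonised minimax risk minus a constant multiple of $P(N<n)$; this contributes the $\exp(-\zeta^2 n/24)$ correction, the denominator $24$ being slack absorbed during bookkeeping.

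Next I would construct the hard prior. In the sample-rich regime $(1+\zeta)n/a>e/16$ I would use a Rademacher perturbation of uniform: draw $\sigma_1,\ldots,\sigma_a$ iid uniform on $\{\pm1\}$ and set $p_i\propto (1+\sigma_i\delta)/a$ with $\delta$ tuned so that $(1+\zeta)n\delta^2/a\asymp 1$, the regime in which the per-atom Poisson two-point test has constant error. Since the $p_i$ must sum to exactly one, I would condition on the high-probability normalisation event $|\sum_i\sigma_i|\le O(\sqrt{a}\log a)$ and renormalise; a Hoeffding bound on the Rademacher sum shows this event fails with probability at most $12\exp(-\zeta^2 a/(32(\log a)^2))$, accounting for the final subtracted term (the factor $12$ comes from a union over sign-choice and renormalisation events and the factor $32$ from the Hoeffding constant tuned against the $\delta$ calibration). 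Applying Assouad's lemma coordinate-wise under the decoupled Poisson likelihood, and bounding the per-coordinate Hellinger or TV distance between $\mathrm{Poisson}((1+\zeta)n(1\pm\delta)/a)$ by a universal constant, yields the main term $\tfrac{1}{8}\sqrt{ea/((1+\zeta)n)}$; the constant $1/8$ arises as the product of Assouad's $1/4$ factor with the $1/2$ from the optimised per-coordinate TV gap.

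For the sample-poor regime $(1+\zeta)n/a\le e/16$ I would instead run Le Cam between a uniform prior on point masses $\{\delta_1,\ldots,\delta_a\}$ (or a two-point reduction thereof): under Poisson sampling the probability that the data fails to distinguish two such hypotheses is governed by the Poisson zero-count probability, and a mixture-vs-mixture TV computation gives total variation at most $1-\exp(-2(1+\zeta)n/a)$, yielding the additive $\exp(-2(1+\zeta)n/a)$ contribution. Combining the two regime bounds, subtracting the poissonisation residue and the normalisation residue, and observing that the two indicators are mutually exclusive, produces exactly the stated inequality. The main obstacle will be the constant bookkeeping: extracting precisely $1/8$, $24$, $12$, and $32(\log a)^2$ requires careful calibration of the Chernoff and Hoeffding slack parameters against the Poisson TV gap, and ensuring that the renormalisation step interacts cleanly with the coordinate-wise Assouad cascade (so that conditioning on the good normalisation event does not destroy the product structure of the prior) is the most delicate part of the argument.
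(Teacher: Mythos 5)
The paper does not prove this theorem at all: it is Theorem~2 of Han, Jiao, and Weissman~\cite{han15}, imported as a black-box minimax lower bound for estimating an element of $\Delta_a$ under $\ell^1$ loss and invoked (after building a bridge from histogram estimation to categorical estimation) to establish Theorems~\ref{thm:lower} and~\ref{thm:tucklower}. So there is no in-paper proof to compare against; you have been asked to reconstruct a cited external result.

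Your high-level reconstruction does capture the spirit of the actual argument in \cite{han15}: Poissonize to $N\sim\mathrm{Poisson}((1+\zeta)n)$ so that coordinate counts decouple, place a product prior on the $p_i$, condition on a high-probability normalization event $|\sum_i p_i - 1|$ being small so that the prior lands in (a slight enlargement of) the simplex, reduce to an independent per-atom Poisson-mean estimation lower bound, and then subtract off the de-Poissonization and normalization failure probabilities (those are precisely the two $\exp(-\cdot)$ correction terms). The split into ``sample-rich'' and ``sample-poor'' regimes via the indicator on $(1+\zeta)n/a$ is also right: in the rich regime the per-atom problem is in the $\sqrt{\lambda}$-scaling regime, while in the poor regime a zero-count two-point test dominates, which is the source of the $\exp(-2(1+\zeta)n/a)$ term.

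However, the details as you've written them do not produce the stated constants, and at least one step is quantitatively wrong. The normalization correction $12\exp\left(-\tfrac{\zeta^2 a}{32(\log a)^2}\right)$ decays essentially exponentially in $a$, which requires controlling a deviation of the prior's centering at scale roughly $a/\log a$, not $\sqrt{a}\log a$. A Hoeffding bound on a Rademacher sum at threshold $C\sqrt{a}\log a$ gives $\exp(-C^2(\log a)^2/2)$, which is the wrong order entirely; the grid spacing and prior range in \cite{han15} are chosen so that the relevant sum has per-coordinate variance $O((\log a)^2/a^2)$ and the deviation threshold is $O(\zeta)$, which is what yields the $a/(\log a)^2$ exponent. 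Similarly, a plain Assouad cascade with Rademacher signs would give a constant of the generic $c\sqrt{a/n}$ form but not the specific $\tfrac{1}{8}\sqrt{e a/((1+\zeta)n)}$; the $\sqrt{e}$ arises from a sharp two-point Poisson-mean lower bound (driven by $P(\mathrm{Poisson}(\lambda)=0)=e^{-\lambda}$ and an explicit optimization over the test point), not from a TV gap bounded by a universal constant. So treat your write-up as a plausible road map to the external proof rather than a derivation; to actually re-derive the theorem one needs the specific Poisson single-parameter lemma and the $\log a$-scaled prior from \cite{han15}.
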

      Our estimator is equivalent to estimating a categorical distribution with $a = bk$ categories. Letting $\zeta = 1$, $bk \to \infty$, and $n\to \infty$, with $n/\left(bk\right) \to 0$, we get that for sufficiently large $n$
      \begin{align*}
        \inf_{\hat{p}} \sup_{p\in \Delta_{bk}} \E_p\left\|\hat{p}-p\right\|_1 \ge \exp\left(- \frac{4n}{bk}\right) - \exp\left(-\frac{n}{24}\right) - 12 \exp\left(- \frac{bk}{32 \left(\log bk \right)^2}\right)
      \end{align*}
      whose right hand side converges to 1. From this we get that 
      \begin{align*}
        \liminf_{n\to \infty} \sup_{p_n\in \Delta_{bk}} \E_{p_n}\left\|v_n-p_n\right\|_1  > \frac{1}{2}
      \end{align*}
      which contradicts $\left\|v_n - p_n\right\|_1\cip 0 $ for arbitrary sequences $p_n$.
    \end{proof}

    \begin{proof}[Proof of Theorem \ref{thm:tucklower}]

      We will proceed by contradiction. Suppose $V_n$ is an estimator violating the theorem statement, i.e. there exist sequences $b\to \infty$ and $k\to \infty$ with $n/\left(bk + k ^d\right) \to 0$ and $b\ge k$ such that, for all $\varepsilon >0$,
      \begin{align*}
        \sup_{p \in \sD_d }P\left(\left\|V_n - p\right\|_1 > 3 \min_{q\in \tsH_{d,b}^k}\left\|p -q \right\|_1 + \varepsilon  \right) \to 0.
      \end{align*}
      Since $n/(bk + k^d) \to 0$ we have that $(bk + k^d)/n \to \infty$ so there is a subsequence $n_i$ such that $b(n_i)k(n_i)/n_i \to \infty$ or $k(n_i)^d/n_i\to \infty$, or equivalently $n_i/(b(n_i)k(n_i)) \to 0$ or $n_i/k(n_i)^d \to 0$. We will show that both cases lead to a contradiction. We will let $b$ and $k$ be functions of $n_i$ implicitly when defining limits. \\
      \textbf{Case $n_i/(bk) \to 0$:}
      We proceed similarly to the proof of Theorem \ref{thm:lower}. Let $\left(p_n\right)_{n=1}^\infty$, $\tP_n$, and $\sX_n$ be defined as in the proof of Theorem \ref{thm:lower}. Note that $\sH_{d,b}^k \subset \tsH_{d,b}^k$ (see proof of Lemma \ref{lem:tuckbias}) and thus $\tP_n \in \tsH_{d,b}^k$. We can proceed exactly as in our proof of Theorem \ref{thm:lower} at footnote \ref{fn:lower}, by simply replacing $\sH_{d,b}^k$ with $\tsH_{d,b}^k$ and $n$ with $n_i$ which finishes this case.\\
      \textbf{Case $n_i/k^d \to 0$:} Let $(p_n)_{n=1}^\infty$ be a sequence of elements in $\sT_{d,k}$ which represents distributions over $[k]^d$. Let $\sX_n \triangleq  \left(X_{n,1},\ldots,X_{n,n} \right)$ with $X_{n,1},\ldots, X_{n,n}\simiid p_n$. Let $\tsX_n = \left(\tX_{n,1},\ldots, \tX_{n,n} \right)$ which are independent random variables with $\tX_{n,i}\sim h_{d,b,X_{n,i}}$. Let $\tP_n$ be the density for $\tX_{n,i}$. Note that $k\le b$. So we have that
      \begin{align*}
        \tP_n 
        &= \sum_{S\in [k]^d} p_{\tX_{n,i}|X_{n,i}= S}P(X_{n,i} = S)\\
        &= \sum_{S\in [k]^d} h_{d,b,S}p_n(S)\\
        &= \sum_{S\in [k]^d}p_n(S)\prod_{i=1}^d h_{1,b,S_i}
      \end{align*}
      and thus $\tP_n \in \tsH_{d,b}^k$. We proceed as in Theorem \ref{thm:lower} to find an estimator for elements of $\sT_{d,k}$ which is equivalent to estimating elements of $\Delta^{k^d}$ which is impossible since $n_i/k^d \to 0$.
    \end{proof}

    \newpage
    \section{Finite Sample Theoretical Results} \label{appx:finite}
    In this section we cover the proofs for the finite-sample results in our paper, Section 2.2.2 in the main text. This includes proofs related to estimator bias on Lipschitz continuous functions. We note that all projections in this paper are in their respective $L^2$ space. As noted in the main text we will be using \emph{set projection}, $\proj_S x \triangleq \arg \min_{s\in S} \left\|x-s\right\|_2$ \cite{bauschke2011convex}. When $S$ is a (closed) linear subspace this is equivalent to a linear projection.  Every instance of set projection in this work yields a unique minimizer. As in the main text let $\lip_L$ be the set of $L$-Lipschitz continuous densities on $[0,1]$ and let $mL = \sup_{f \in \lip_L}\left\|f\right\|_2$.

    \subsection{Bias}

    Theorem \ref{thm:l1bias} is in the main text but is not used as is in any of our other proofs and is meant simply to be illustrative of the behavior of the bias as in the main text. We will get the proof of this theorem out of the way before moving on to the core results of this portion of the proofs. Note that $\lambda$ is the standard Lebesgue measure.
    \begin{proof}[Proof of Theorem \ref{thm:l1bias}]
      From H\"older's Inequality we have that for any function $f:[0,1]^d \to \rn$ that
      \begin{equation} \label{eqn:holders}
        \left\|f \right\|_1 = \left\|f\cdot \1 \right\|_1 \le \left\|f \right\|_2 \left\|\1\right\|_2 = \left\|f\right\|_2.
      \end{equation}
      Applying this directly to the inequality from Theorem \ref{thm:rank-one-l2-bias} we have
      \begin{equation*}
        \left \| \prod_{i=1}^d f_i - \proj_{\sH_{d,b}^1}\prod_{i=1}^d f_i \right\|^2_2 \le m_L^{2d}- \left(m_L^2 - \frac{L^2}{12b^2}  \right) ^d.
      \end{equation*}
    \end{proof}

    We now developing the core results needed for the paper. Because of the consequence of H\"older's Inequality \eqref{eqn:holders} we will focus mainly on integrated squared distance between functions.

    \begin{lem} \label{lem:midapprox}
      Let $f:[a,b]\to \rn$ be an $L$-Lipschitz continuous function. Then  
      $$\min_{\alpha \in \rn} \int_a^b \left(\alpha - f(x)\right)^2 dx \le \frac{L^2\left(b - a \right)^3 }{12}.$$
    \end{lem}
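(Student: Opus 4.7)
The plan is to bound the minimum by evaluating the integral at a convenient, not necessarily optimal, choice of $\alpha$. Although the true minimizer is the mean value $\frac{1}{b-a}\int_a^b f(x)\,dx$, it is much cleaner to pick $\alpha = f(m)$ where $m \triangleq (a+b)/2$ is the midpoint of the interval. This is natural because Lipschitz control gives pointwise bounds on $|f(x) - f(m)|$ in terms of $|x - m|$, and $|x - m|$ is small (at most $(b-a)/2$) uniformly on $[a,b]$.

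Concretely, I would proceed in two short steps. First, since we are taking a minimum over $\alpha \in \rn$, the inequality
$$\min_{\alpha \in \rn} \int_a^b \left(\alpha - f(x)\right)^2 dx \le \int_a^b \left(f(m) - f(x)\right)^2 dx$$
is immediate. Second, using the $L$-Lipschitz property of $f$, we have $|f(m) - f(x)| \le L|x - m|$, so the right-hand side is bounded by $L^2 \int_a^b (x-m)^2\,dx$. A direct substitution $t = x - m$ yields
$$\int_a^b (x-m)^2\,dx = \int_{-(b-a)/2}^{(b-a)/2} t^2\,dt = \frac{(b-a)^3}{12},$$
which gives the claimed bound.

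There is essentially no obstacle here; the whole lemma is a routine one-line computation once the midpoint is identified as the convenient evaluation point. The only mild subtlety is recognizing that one should not attempt to compute the exact minimum in closed form (which would involve the mean of $f$ and require a more delicate variance-style estimate), but instead exploit the freedom of the minimum to plug in $f(m)$. This lemma will presumably be applied later on each bin of a one-dimensional histogram of width $1/b$, giving an $L^2$ approximation error of order $L^2/(12 b^2)$ per unit interval after summing the $b$ contributions of size $(1/b)^3$, which matches the $L^2/(12 b^2)$ factor appearing in Theorem~\ref{thm:l1bias}.
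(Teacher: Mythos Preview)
Your proposal is correct and follows exactly the paper's own proof: choose $\alpha = f\!\left(\frac{a+b}{2}\right)$, apply the Lipschitz bound pointwise, and evaluate $\int_a^b (x-m)^2\,dx = (b-a)^3/12$. Your closing remark about summing over bins to obtain the $L^2/(12b^2)$ factor is also precisely how the lemma is used downstream (Lemma~\ref{lem:1dhistproj}).
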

    \begin{proof}[Proof of Lemma \ref{lem:midapprox}]
      Let $\alpha = f\left(\frac{a+b}{2}\right)$. Then we have that
      \begin{align*}
        \int_a^b \left(\alpha - f(x) \right)^2dx
        &\le  \int_a^b \left(L\left|x-\frac{a+b}{2}\right|\right)^2 dx\\
        &=  L^2\int_a^b \left(x-\frac{a+b}{2}\right)^2 dx\\
        &=  L^2\int_{a-\frac{a+b}{2}}^{b-\frac{a+b}{2}}x^2 dx\\
        &=  L^2\int_{\frac{a-b}{2}}^{\frac{b-a}{2}}x^2 dx\\
        &=  \frac{L^2}{3}\left[x^3 \right]_{\frac{a-b}{2}}^{\frac{b-a}{2}}\\
        &=  \frac{L^2}{3}2  \left(\frac{b-a}{2}\right)^3\\
        &= \frac{L^2\left(b - a \right)^3 }{12}.
      \end{align*}
    \end{proof}
    \begin{lem} \label{lem:1dhistproj}
      Let $f$ be an $L$-Lipschitz function on $[0,1]$, then $$\left\|f-\proj_{\spn \left(\sH_{1,b}\right)}f\right\|_2^2 \le \frac{L^2}{12b^2}.$$
    \end{lem}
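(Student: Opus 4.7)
The plan is to exploit the fact that $\spn(\sH_{1,b})$ is exactly the space of functions on $[0,1]$ that are constant on each of the $b$ dyadic bins $[(i-1)/b,\, i/b)$. Since the projection is the $L^2$ minimizer over this subspace, it suffices to exhibit \emph{any} piecewise-constant function $g$ on these bins with $\|f-g\|_2^2 \le L^2/(12b^2)$, and then conclude
\[
\left\|f-\proj_{\spn(\sH_{1,b})}f\right\|_2^2 \;\le\; \|f-g\|_2^2.
\]

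First I would construct $g$ bin by bin using Lemma \ref{lem:midapprox}. Restricted to the interval $[(i-1)/b, i/b]$, the function $f$ is still $L$-Lipschitz, so Lemma \ref{lem:midapprox} (applied with $a=(i-1)/b$ and $b'=i/b$, giving $b'-a = 1/b$) produces a constant $\alpha_i$ such that
\[
\int_{(i-1)/b}^{\,i/b} \bigl(\alpha_i - f(x)\bigr)^2\,dx \;\le\; \frac{L^2 (1/b)^3}{12} \;=\; \frac{L^2}{12 b^3}.
\]
Define $g := \sum_{i=1}^b \alpha_i\, \1_{[(i-1)/b,\,i/b)}$. Then $g$ lies in $\spn(\sH_{1,b})$ since each indicator $\1_{[(i-1)/b,\,i/b)}$ is a scalar multiple of $h_{1,b,i}$.

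Next I would sum the local bounds across the $b$ bins:
\[
\|f-g\|_2^2 \;=\; \sum_{i=1}^b \int_{(i-1)/b}^{i/b} (f(x)-\alpha_i)^2\, dx \;\le\; b \cdot \frac{L^2}{12 b^3} \;=\; \frac{L^2}{12 b^2}.
\]
Finally, since $\proj_{\spn(\sH_{1,b})} f$ is by definition the $\|\cdot\|_2$-closest element of $\spn(\sH_{1,b})$ to $f$, and $g$ is one such element, we conclude $\|f - \proj_{\spn(\sH_{1,b})}f\|_2^2 \le \|f-g\|_2^2 \le L^2/(12 b^2)$, which is the desired inequality.

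There is no real obstacle here; the lemma is essentially a direct corollary of Lemma \ref{lem:midapprox} combined with the definition of orthogonal projection. The only item worth being careful about is the identification of $\spn(\sH_{1,b})$ with piecewise-constant functions on the bin partition, which is immediate from the definition of $h_{1,b,i}$ (the leading factor of $b$ is absorbed by the arbitrary scalars in the span).
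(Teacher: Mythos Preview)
Your proposal is correct and matches the paper's proof essentially step for step: both identify $\spn(\sH_{1,b})$ with piecewise-constant functions on the bins, apply Lemma~\ref{lem:midapprox} on each interval $[(i-1)/b,\,i/b]$ to get a per-bin error of $L^2/(12b^3)$, and sum over the $b$ bins. The only cosmetic difference is that the paper writes the projection directly as a minimization over weights $w\in\rn^b$ and bounds the minimum, whereas you exhibit an explicit comparator $g$ and invoke the minimizing property of the projection afterward; these are equivalent.
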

    \begin{proof}[Proof of Lemma \ref{lem:1dhistproj}]
      Applying Lemma~\ref{lem:midapprox} we have that
      \begin{align}
          \left\|f-\proj_{\spn \left(\sH_{1,b}\right)}f\right\|_2^2
          &= \min_{w\in \rn^b} \left\|f - \sum_{i=1}^b w_i h_{1,b,i} \right\|_2^2 \notag\\
          &= \min_{w\in \rn^b} \left\|f - \sum_{i=1}^b w_i b \1_{\left[\frac{i-1}{b},\frac{i}{b} \right)} \right\|_2^2 \notag \\
          &= \min_{w\in \rn^b} \int_{[0,1]} \left( f(x) -\sum_{i=1}^b  w_i \1\left(\frac{i-1}{b}\le x <\frac{i}{b} \right)  \right)^2 dx\notag \\
          &= \min_{w\in \rn^b} \int_{[0,1]} \left(\sum_{i=1}^b \left(f(x) -  w_i\right) \1\left(\frac{i-1}{b}\le x <\frac{i}{b} \right)  \right)^2 dx \label{eqn:cross1}\\
          &= \min_{w\in \rn^b}  \int_{[0,1]} \sum_{i=1}^b \left( \left(f(x) -  w_i\right) \1\left(\frac{i-1}{b}\le x <\frac{i}{b} \right)  \right)^2 dx\label{eqn:cross2}\\
          &= \min_{w\in \rn^b} \sum_{i=1}^b \int_{\frac{i-1}{b}}^\frac{i}{b} \left( f(x) -  w_i \right)^2 dx\notag\\
          &\le b \frac{L^2}{12 b^3} = \frac{L^2}{12b^2}, \notag
      \end{align}
      where \eqref{eqn:cross1} to \eqref{eqn:cross2} is justified since, when distributing the square, the cross terms of the form $\1\left(\frac{i-1}{b}\le x <\frac{i}{b} \right)\1\left(\frac{j-1}{b}\le x <\frac{j}{b} \right)$ are equal to zero when $i\neq j$.
    \end{proof}

    \begin{lem}\label{lem:projprod}
      Let $f_1,\ldots,f_d \in L^2([0,1])$. Then 
      $$ \proj_{\spn\left(\sH_{d,b}\right)} \prod_{i=1}^d f_i = \prod_{i=1}^d \proj_{\spn\left(\sH_{1,b}\right)}f_i. $$
    \end{lem}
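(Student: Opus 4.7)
The plan is to exploit the tensor product structure: $\spn(\sH_{d,b})$ is exactly the algebraic tensor product of $d$ copies of $\spn(\sH_{1,b})$ inside $L^2([0,1]^d)$, because the basis functions $h_{d,b,A} = \prod_i h_{1,b,A_i}$ factor over the coordinates. Since the $h_{1,b,i}$ have disjoint supports they are mutually orthogonal in $L^2([0,1])$, and consequently the $h_{d,b,A}$ are mutually orthogonal in $L^2([0,1]^d)$. The general principle I want to use is that the orthogonal projection onto a tensor product of finite-dimensional subspaces equals the tensor product of the coordinatewise projections.

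Concretely, I would write $f_i = g_i + r_i$, where $g_i \triangleq \proj_{\spn(\sH_{1,b})} f_i$ and $r_i \triangleq f_i - g_i \perp \spn(\sH_{1,b})$, i.e.\ $\langle r_i, h_{1,b,j}\rangle = 0$ for all $j \in [b]$. Expanding the product gives
\[
\prod_{i=1}^d f_i = \prod_{i=1}^d (g_i + r_i) = \prod_{i=1}^d g_i + \sum_{\emptyset \ne T \subseteq [d]} \Bigl(\bigotimes_{i \notin T} g_i\Bigr) \otimes \Bigl(\bigotimes_{i \in T} r_i\Bigr),
\]
where the tensor notation just emphasizes that each factor acts on its own coordinate. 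The leading term $\prod_i g_i$ is a product of elements of $\spn(\sH_{1,b})$, hence lies in $\spn(\sH_{d,b})$. It therefore suffices to show that every other summand is orthogonal to $\spn(\sH_{d,b})$.

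For this, fix any basis element $h_{d,b,A} = \prod_i h_{1,b,A_i}$ and any $T \ne \emptyset$. By Fubini's theorem (all factors are bounded on $[0,1]$, so integrability is immediate),
\[
\Bigl\langle \Bigl(\bigotimes_{i \notin T} g_i\Bigr) \otimes \Bigl(\bigotimes_{i \in T} r_i\Bigr),\; h_{d,b,A}\Bigr\rangle = \prod_{i \notin T} \langle g_i, h_{1,b,A_i}\rangle \cdot \prod_{i \in T} \langle r_i, h_{1,b,A_i}\rangle,
\]
and each factor in the second product vanishes because $r_i \perp \spn(\sH_{1,b})$. By linearity the same orthogonality holds against every element of $\spn(\sH_{d,b})$. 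Combining the two facts, $\prod_i g_i \in \spn(\sH_{d,b})$ and $\prod_i f_i - \prod_i g_i \perp \spn(\sH_{d,b})$, which is the defining property of the orthogonal projection, so $\proj_{\spn(\sH_{d,b})} \prod_i f_i = \prod_i g_i = \prod_i \proj_{\spn(\sH_{1,b})} f_i$.

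I do not anticipate a genuine obstacle; the only thing to be a little careful about is justifying that the cross terms really are orthogonal to every element of $\spn(\sH_{d,b})$ (not just to the spanning basis), but this follows immediately from linearity of the inner product once the basis case is handled. An alternative, essentially equivalent route would be to argue by uniqueness of orthogonal projection onto a finite-dimensional subspace: write any element of $\spn(\sH_{d,b})$ as $\sum_A \alpha_A h_{d,b,A}$, compute $\langle \prod_i f_i, h_{d,b,A}\rangle = \prod_i \langle f_i, h_{1,b,A_i}\rangle = \prod_i \langle g_i, h_{1,b,A_i}\rangle$ (the last equality because $r_i \perp h_{1,b,A_i}$), and read off that $\prod_i g_i$ has the same inner product with every basis vector as $\prod_i f_i$, hence is its projection.
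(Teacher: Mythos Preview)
Your proof is correct and rests on the same key facts as the paper's: the $h_{d,b,A}$ form an orthogonal basis for $\spn(\sH_{d,b})$, and inner products of tensor products factor as products of inner products. The organization differs slightly: the paper expands $\prod_i \proj_{\spn(\sH_{1,b})} f_i$ directly via the orthogonal-basis projection formula $\sum_j \frac{h_{1,b,j}}{\|h_{1,b,j}\|_2^2}\langle h_{1,b,j},f_i\rangle$, distributes the tensor product over the sums, and recognizes the result as $\sum_A \frac{h_{d,b,A}}{\|h_{d,b,A}\|_2^2}\langle h_{d,b,A},\prod_i f_i\rangle = \proj_{\spn(\sH_{d,b})}\prod_i f_i$. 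Your argument instead uses the characterizing property of orthogonal projection (element of the subspace plus an orthogonal remainder), decomposing each $f_i = g_i + r_i$ and killing the cross terms. Your ``alternative route'' at the end is essentially the paper's computation. Either way the content is the same; your version makes the role of orthogonality of the remainder more explicit, while the paper's version is a one-line algebraic identity once the projection formula is written down.
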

    \begin{proof}[Proof of Lemma \ref{lem:projprod}]
      We will be using a multi-index $A = \left(i_1,\ldots,i_d\right) \in [b]^{ d}$ so that $\prod_{j=1}^d h_{1,b,i_j} = h_{d,b,A}$ and note that $h_{d,b,A}\perp h_{d,b,A'}$ for $A\neq A'$, so $h_{d,b,A}$ is an \emph{orthogonal} basis for $\sH_{d,b}$. We have the following
      \begin{align}
        \prod_{i=1}^d \proj_{\spn\left(\sH_{1,b}\right)} f_i 
        &= \prod_{i=1}^d\sum_{j=1}^b \frac{h_{1,b,j}}{\left\|h_{1,b,j}\right\|_2^2}\left<h_{1,b,j},f_i\right>\notag\\
        \begin{split}&= \left( \frac{h_{1,b,1}}{\left\|h_{1,b,1}\right\|_2^2}\left<h_{1,b,1},f_1\right>+ \cdots +\frac{h_{1,b,b}}{\left\|h_{1,b,b}\right\|_2^2}\left<h_{1,b,b},f_1\right> \right)\otimes \\
        & \quad \cdots \otimes\left( \frac{h_{1,b,1}}{\left\|h_{1,b,1}\right\|_2^2}\left<h_{1,b,1},f_d\right> + \cdots +\frac{h_{1,b,b}}{\left\|h_{1,b,b}\right\|_2^2}\left<h_{1,b,b},f_d\right> \right).\end{split}\label{eqn:bigproduct}
      \end{align}
      Now, distributing the terms in \eqref{eqn:bigproduct} and consolidating the subscripts into $A$ we get that \eqref{eqn:bigproduct} is equal to
      \begin{align*}
        \sum_{A \in \left[b\right]^{ d}} \prod_{i=1}^d \frac{h_{1,b,A_i}}{\left\|h_{1,b,A_i}\right\|_2^2}\left<h_{1,b,A_i},f_i\right>
        &=\sum_{A \in \left[b\right]^{ d}}  \frac{\prod_{i=1}^d h_{1,b,A_i}}{\prod_{i=1}^d\left\|h_{1,b,A_i}\right\|_2^2}\prod_{i=1}^d\left<h_{1,b,A_i},f_i\right>\\
        &= \sum_{A \in \left[b\right]^{ d}}  \frac{h_{d,b,A}}{\left\|h_{d,b,A}\right\|_2^2}\left<h_{d,b,A},\prod_{i=1}^df_i\right>\\
        &=\proj_{\spn\left(\sH_{d,b}\right)} \prod_{i=1}^d f_i.
      \end{align*}
    \end{proof}
    
    \begin{lem}\label{lem:spanproj}
        Let $f \in L^2 \left(\left[0,1\right]^d \right)$ be a probability density. Then
        \begin{equation*}
            \proj_{\spn\left(\sH_{d,b}\right)} f = \proj_{\sH_{d,b}} f.
        \end{equation*}
    \end{lem}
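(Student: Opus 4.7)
The plan is to show that the $L^2$ projection of a density $f$ onto $\spn(\sH_{d,b})$ already lies in $\sH_{d,b}$; since $\sH_{d,b} \subseteq \spn(\sH_{d,b})$, this immediately forces the two projections to coincide (any minimizer of $\|f-\cdot\|_2$ over the larger set that happens to lie in the smaller set is also a minimizer over the smaller set, and uniqueness of the projection closes the argument).

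To carry this out, I would first note that the indicator histograms $\{h_{d,b,A}\}_{A \in [b]^d}$ form an \emph{orthogonal} basis of $\spn(\sH_{d,b})$: their supports $\Lambda_{d,b,A}$ are disjoint and each has Lebesgue measure $b^{-d}$, so $\|h_{d,b,A}\|_2^2 = b^{2d}\cdot b^{-d} = b^d$ and $\langle h_{d,b,A}, h_{d,b,A'}\rangle = 0$ when $A \neq A'$. Thus the linear projection has the closed form
\begin{equation*}
    \proj_{\spn(\sH_{d,b})} f \;=\; \sum_{A \in [b]^d} \frac{\langle f, h_{d,b,A}\rangle}{\|h_{d,b,A}\|_2^2}\, h_{d,b,A}.
\end{equation*}

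Next I would compute the coefficients explicitly. Since $h_{d,b,A} = b^d \1_{\Lambda_{d,b,A}}$, we have
\begin{equation*}
    \frac{\langle f, h_{d,b,A}\rangle}{\|h_{d,b,A}\|_2^2} \;=\; \frac{b^d \int_{\Lambda_{d,b,A}} f\, d\lambda}{b^d} \;=\; \int_{\Lambda_{d,b,A}} f\, d\lambda \;=\; P_f(\Lambda_{d,b,A}),
\end{equation*}
where $P_f$ is the probability measure with density $f$. Because $f$ is a density these coefficients are nonnegative, and because the $\Lambda_{d,b,A}$ partition $[0,1)^d$ they sum to $1$. Hence the coefficient vector lies in $\Delta_{b^d}$ and the projection is a convex combination of the $h_{d,b,A}$, that is, an element of $\sH_{d,b} = \conv(\{h_{d,b,A}\})$.

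The argument has no real obstacle: the only subtlety is making explicit that the defining coefficients of the span-projection happen to be exactly the bin probabilities $P_f(\Lambda_{d,b,A})$, which by positivity and normalization of $f$ are automatically a convex-combination weight. I would then conclude with the short observation above about minimizers over nested sets, invoking uniqueness of the $L^2$ projection (both $\sH_{d,b}$ and $\spn(\sH_{d,b})$ are closed convex subsets of $L^2([0,1)^d)$, so the projection is well-defined and unique, matching the \emph{set projection} definition used in the paper).
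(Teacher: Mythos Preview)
Your proposal is correct and follows essentially the same approach as the paper: compute the orthogonal expansion of $\proj_{\spn(\sH_{d,b})} f$ in the basis $\{h_{d,b,A}\}$, identify the coefficients as the bin probabilities $\int_{\Lambda_{d,b,A}} f$, and observe that these are nonnegative and sum to one so the projection already lies in $\sH_{d,b}$. Your closing remark about minimizers over nested sets and uniqueness makes explicit a step the paper leaves implicit, but otherwise the arguments are identical.
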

    
    \begin{proof}[Proof of Lemma \ref{lem:spanproj}]
        Since $h_{d,b,A}\perp h_{d,b,A'}$ for $A\neq A'$ we have
        \begin{equation*}
            \proj_{\spn\left(\sH_{d,b}\right)} f
            =\sum_{A \in \left[b\right]^{ d}} \frac{h_{d,b,A}}{\left\|h_{d,b,A}\right\|_2^2}\left<h_{d,b,A},f\right>
        \end{equation*}
        and
        \begin{equation*}
            \proj_{\spn\left(\sH_{d,b}\right)} f
            =\sum_{A \in \left[b\right]^{ d}} w_A h_{d,b,A}
        \end{equation*}
        where
        \begin{equation*}
            w_A
            = \frac{\left<h_{d,b,A},f\right>}{\left\|h_{d,b,A}\right\|_2^2}.
        \end{equation*}
        Clearly $w_A\ge 0$ so to finish we need only show that $\sum_{A \in \left[b\right]^{ d}}w_A=1$. To this end we have
        \begin{align*}
            \sum_{A \in \left[b\right]^{ d}}w_A
            &= \sum_{A \in \left[b\right]^{ d}} \frac{\left<h_{d,b,A},f\right>}{\left\|h_{d,b,A}\right\|_2^2}\\            
            &= \sum_{A \in \left[b\right]^{ d}} \int_{[0,1]^d} b^d \1\left(x \in \Lambda_{d,b,A}\right)f(x) dx /b^d\\
            &=  \int_{[0,1]^d}  \sum_{A \in \left[b\right]^{ d}} \1\left(x \in \Lambda_{d,b,A}\right)f(x) dx \\
            &= \int_{[0,1]^d} f(x) dx\\
            &=1.
        \end{align*}
    \end{proof}
    
    \begin{cor}\label{cor:rankproj}
        Let $f_1,\ldots,f_d \in L^2([0,1])$ be probability densities, then 
            $$ \proj_{\sH_{d,b}} \prod_{i=1}^d f_i = \prod_{i=1}^d \proj_{\sH_{1,b}}f_i = \proj_{\sH_{d,b}^1} \prod_{i=1}^d f_i. $$
    \end{cor}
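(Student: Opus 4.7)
The plan is to chain together Lemma \ref{lem:projprod} and Lemma \ref{lem:spanproj} for the first equality, and then use the nesting $\sH_{d,b}^1 \subseteq \sH_{d,b}$ together with membership of the candidate in $\sH_{d,b}^1$ for the second equality. The only real content beyond invoking earlier results is verifying that the product of one-dimensional projections actually lies in $\sH_{d,b}^1$, so that the global minimizer over the larger set $\sH_{d,b}$ is already attained inside $\sH_{d,b}^1$.

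For the first equality, I would start by applying Lemma \ref{lem:projprod} to obtain
\begin{equation*}
\proj_{\spn(\sH_{d,b})} \prod_{i=1}^d f_i = \prod_{i=1}^d \proj_{\spn(\sH_{1,b})} f_i.
\end{equation*}
Since $\prod_{i=1}^d f_i$ is a probability density on $[0,1]^d$, Lemma \ref{lem:spanproj} lets me replace the span-projection on the left by $\proj_{\sH_{d,b}}\prod_{i=1}^d f_i$. Similarly, applying the one-dimensional case of Lemma \ref{lem:spanproj} to each probability density $f_i$ on $[0,1]$ replaces each span-projection on the right by $\proj_{\sH_{1,b}} f_i$. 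This yields $\proj_{\sH_{d,b}}\prod_i f_i = \prod_i \proj_{\sH_{1,b}} f_i$.

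For the second equality, I would note that for each $i$, $\proj_{\sH_{1,b}} f_i$ is a one-dimensional histogram probability density (this is exactly what Lemma \ref{lem:spanproj} provides: nonnegative weights summing to one), so the tensor product $\prod_{i=1}^d \proj_{\sH_{1,b}} f_i$ belongs to $\sH_{d,b}^1$ by the defining form of that set. Because $\sH_{d,b}^1 \subseteq \sH_{d,b}$, for any $g \in \sH_{d,b}^1$ we have $\|\prod_i f_i - g\|_2 \geq \|\prod_i f_i - \proj_{\sH_{d,b}}\prod_i f_i\|_2$, with equality attained by the element $\prod_i \proj_{\sH_{1,b}} f_i \in \sH_{d,b}^1$ by the first equality. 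Since the $L^2$ projection onto the closed convex set $\sH_{d,b}^1$ is unique, this forces $\proj_{\sH_{d,b}^1} \prod_i f_i = \prod_i \proj_{\sH_{1,b}} f_i$, completing the proof.

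The main (minor) obstacle is simply the bookkeeping check that the product of the one-dimensional histogram projections is itself a member of $\sH_{d,b}^1$ in the sense of the definition in \eqref{eqn:lrhist} (with $k=1$); this is the mechanism that collapses the nominally harder rank-constrained projection onto $\sH_{d,b}^1$ to the unconstrained histogram projection onto $\sH_{d,b}$.
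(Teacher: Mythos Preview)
Your proof follows essentially the same route as the paper's: the first equality via Lemmas \ref{lem:projprod} and \ref{lem:spanproj}, and the second via the containment $\sH_{d,b}^1 \subset \sH_{d,b}$ together with the observation that $\prod_i \proj_{\sH_{1,b}} f_i \in \sH_{d,b}^1$.

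One small correction: you invoke uniqueness of the projection by calling $\sH_{d,b}^1$ a ``closed convex set,'' but $\sH_{d,b}^1$ is the set of \emph{separable} histograms and is not convex (a convex combination of two rank-one histograms is generally rank two). Uniqueness nonetheless holds, for the reason already implicit in your argument: the projection onto the genuinely convex set $\sH_{d,b}$ is unique, and since this unique minimizer lies in $\sH_{d,b}^1 \subset \sH_{d,b}$, every other element of $\sH_{d,b}^1$ is strictly farther away. Replace the convexity claim with this observation and the proof is complete.
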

    \begin{proof}[Proof of Corollary \ref{cor:rankproj}]
        The first equality follows from Lemmas \ref{lem:projprod} and \ref{lem:spanproj}. To second equality follows from the first equality, the observation that $\sH_{d,b}^1 \subset \sH_{d,b}$, and that Lemma \ref{lem:spanproj} implies $\prod_{i=1}^d \proj_{\sH_{1,b}}f_i \in \sH_{d,b}^1$.
    \end{proof}
    
    \begin{thm} \label{thm:rank-one-l2-bias}
      Let $m_L=\sup_{f \in \lip_L}\left\|f\right\|_2$ and let $b^2 \ge L^2/12$ then, for any $f_1,\ldots,f_d \in \lip_L$, we have that
      $$
      \left \| \prod_{i=1}^d f_i - \proj_{\sH_{d,b}^1}\prod_{i=1}^d f_i \right\|_2^2 \le m_L^{2d}- \left(m_L^2 - \frac{L^2}{12b^2} \right)^d.
      $$
    \end{thm}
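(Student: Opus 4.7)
The plan is to combine the previously established projection identity with two applications of Pythagoras (one multidimensional, one per marginal) so that the problem reduces to a pure arithmetic inequality involving products of the form $\prod a_i - \prod(a_i - \varepsilon_i)$.

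First, by Corollary~\ref{cor:rankproj}, $\proj_{\sH_{d,b}^1}\prod_i f_i = \prod_i \proj_{\sH_{1,b}}f_i$, and by Lemma~\ref{lem:spanproj} this equals $\proj_{\spn(\sH_{d,b})}\prod_i f_i$, which is an orthogonal projection onto a closed linear subspace. Pythagoras therefore gives
\begin{equation*}
\left\|\prod_{i=1}^d f_i - \proj_{\sH_{d,b}^1}\prod_{i=1}^d f_i\right\|_2^2 = \left\|\prod_{i=1}^d f_i\right\|_2^2 - \left\|\prod_{i=1}^d \proj_{\sH_{1,b}}f_i\right\|_2^2.
\end{equation*}
The tensor-product structure of the $L^2$ norm lets me factor both sides: writing $a_i := \|f_i\|_2^2$ and $\varepsilon_i := \|f_i - \proj_{\sH_{1,b}} f_i\|_2^2$, Pythagoras on each marginal gives $\|\proj_{\sH_{1,b}} f_i\|_2^2 = a_i - \varepsilon_i$, and Lemma~\ref{lem:1dhistproj} gives $\varepsilon_i \le L^2/(12 b^2) =: \varepsilon$. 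Also $a_i \le m_L^2$ by definition of $m_L$.

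So the claim reduces to proving $\prod_i a_i - \prod_i (a_i - \varepsilon_i) \le m_L^{2d} - (m_L^2 - \varepsilon)^d$ under the constraints $0\le \varepsilon_i\le \varepsilon$ and $a_i\le m_L^2$. I would handle this by two monotonicity steps. Holding $(a_i)$ fixed, the left-hand side is nondecreasing in each $\varepsilon_i$ (its partial derivative is $\prod_{j\neq i}(a_j - \varepsilon_j)$, nonneg if all $a_j\ge \varepsilon_j$), so bumping each $\varepsilon_i$ up to $\varepsilon$ only increases it. Next, the resulting expression $\prod_i a_i - \prod_i (a_i - \varepsilon)$ is nondecreasing in each $a_i$ whenever $a_j - \varepsilon \ge 0$ for $j\ne i$ (the partial derivative is $\prod_{j\neq i} a_j - \prod_{j\neq i}(a_j - \varepsilon) \ge 0$), so bumping each $a_i$ up to $m_L^2$ yields the desired bound.

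The only subtle point, and the main place the hypothesis $b^2\ge L^2/12$ gets used, is verifying the nonnegativity $a_i - \varepsilon \ge 0$ (and $a_j - \varepsilon_j \ge 0$) needed to apply the monotonicity. For this I will observe that any probability density on $[0,1]$ satisfies $\|f_i\|_2 \ge \|f_i\|_1 = 1$ by Cauchy--Schwarz (since $\int 1\,d\lambda = 1$), so $a_i \ge 1$; and the hypothesis gives $\varepsilon = L^2/(12 b^2) \le 1 \le a_i$, so all the differences are nonnegative. This same bound also ensures $m_L^2 \ge 1 \ge \varepsilon$, so the final expression $m_L^{2d} - (m_L^2 - \varepsilon)^d$ is itself nonnegative and the monotonicity argument is self-consistent. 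Putting these pieces together yields the theorem.
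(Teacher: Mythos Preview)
Your proposal is correct and follows essentially the same route as the paper: both use Corollary~\ref{cor:rankproj} and Lemma~\ref{lem:spanproj} to reduce to $\prod_i \|f_i\|_2^2 - \prod_i \|\proj_{\sH_{1,b}} f_i\|_2^2$ via Pythagoras, then bound each $\varepsilon_i$ by $L^2/(12b^2)$ using Lemma~\ref{lem:1dhistproj}, invoke $\|f_i\|_2\ge 1$ together with the hypothesis $b^2\ge L^2/12$ for the needed nonnegativity, and finish with a monotonicity argument in the $a_i$'s (the paper does this via the same partial-derivative computation you describe). The only cosmetic difference is that the paper handles the $\varepsilon_i\to \varepsilon$ step with a one-line product inequality rather than a derivative, but the two arguments are equivalent.
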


    \begin{proof}[Proof of Theorem \ref{thm:rank-one-l2-bias}]
      We have
      \begin{align}
        \left\| \prod_{i=1}^d f_i - \proj_{\sH_{d,b}^1} \prod_{i=1}^d  f_i\right\|_2^2
        &=\left\| \prod_{i=1}^d f_i - \proj_{\sH_{d,b}} \prod_{i=1}^d  f_i\right\|_2^2& \text{Corollary \ref{cor:rankproj}} \notag \\
        &= \left\| \prod_{i=1}^d f_i\right\|_2^2 - \left\| \proj_{\sH_{d,b}} \prod_{i=1}^d f_i \right\|_2^2& \text{ (Lemma \ref{lem:spanproj} with linear projection}\label{eqn:projtrick} \\
        &= \left\| \prod_{i=1}^d f_i\right\|_2^2 - \left\| \prod_{i=1}^d \proj_{\sH_{1,b}}f_i \right\|_2^2 & \text{Corollary \ref{cor:rankproj}} \notag \\
        &= \prod_{i=1}^d \left\|  f_i\right\|_2^2 -  \prod_{i=1}^d \left\| \proj_{\sH_{1,b}}f_i \right\|_2^2 & \text{Def'n of tensor product norm}.\notag
      \end{align}
      Noting that $\left\| \proj_{\sH_{1,b}}f_i \right\|_2^2 = \left\|f_i\right\|_2^2 - \left\|f_i - \proj_{\sH_{1,b}} f_i \right\|_2^2$ (rearrangement of the property used in \eqref{eqn:projtrick}) we have that
      \begin{equation}
        \left\| \prod_{i=1}^d f_i - \prod_{i=1}^d \proj_{\sH_{d,b}} f_i\right\|_2^2 = \prod_{i=1}^d\left\|f_i\right\|_2^2 - \prod_{i=1}^d\left(\left\| f_i\right\|_2^2 - \left\|f_i - \proj_{\sH_{1,b}}f_i \right\|_2^2 \right).\label{eqn:prodproj}
      \end{equation} 

      We will now turn our attention to the subtrahend on the right hand side of the previous equation.
      Note that the product terms $\left\| f_i\right\|_2^2 - \left\|f_i - \proj_{\sH_{1,b}}f_i \right\|_2^2 = \left\| \proj_{\sH_{1,b}}f_i \right\|_2^2>0$ so $\prod_{i=1}^d\left(\left\| f_i\right\|_2^2 - \left\|f_i - \proj_{\sH_{1,b}}f_i \right\|_2^2 \right)$ is a product of positive values. From Lemmas \ref{lem:1dhistproj} and \ref{lem:spanproj} we have that $\left\|f_i - \proj_{\sH_{1,b}}f_i \right\|_2^2 \le \frac{L^2}{12b^2}$. So we have
      $$
      \left\| f_i\right\|_2^2 - \left\|f_i - \proj_{\sH_{1,b}}f_i \right\|_2^2 \ge \left\| f_i\right\|_2^2 - \frac{L^2}{12b^2}.
      $$
      From H\"older's Inequality we have that 
      \begin{equation*}
        \left\|f \cdot \1 \right\|_1 \le \left\|f\right\|_2 \left\|\1\right\|_2 \Rightarrow \left\|f\right\|_2\ge 1.
      \end{equation*}
      Combining this with the hypothesis $\frac{L^2}{12b^2} \le 1$ we have that
      $$
      \left\| f_i\right\|_2^2 - \left\|f_i - \proj_{\sH_{1,b}}f_i \right\|_2^2 \ge \left\| f_i\right\|_2^2 - \frac{L^2}{12b^2} \ge0.
      $$
      For a pair of tuples $b_i\ge a_i\ge0$ it follows that $\prod b_i \ge \prod a_i$ and thus we have that 
      $$
      \prod_{i=1}^d\left(\left\| f_i\right\|_2^2 -  \left\|f_i - \proj_{\sH_{1,b}}f_i \right\|_2^2\right) \ge \prod_{i=1}^d\left( \left\| f_i\right\|_2^2 - \frac{L^2}{12b^2}\right).
      $$ 
      Plugging this back into the RHS of \eqref{eqn:prodproj} we get
      \begin{equation}
        \prod_{i=1}^d\left\|f_i\right\|_2^2 - \prod_{i=1}^d\left(\left\| f_i\right\|_2^2 - \left\|f_i - \proj_{\sH_{1,b}}f_i \right\|_2^2 \right) \le \prod_{i=1}^d\left\|f_i\right\|_2^2 - \prod_{i=1}^d\left(\left\| f_i\right\|_2^2 - \frac{L^2}{12b^2} \right). \label{eqn:lipapprox}
      \end{equation} 
      For some arbitrary $j$ we perform the following derivative
      \begin{align*}
        \frac{d}{d\left\|f_j\right\|_2^2 }\left( \prod_{i=1}^d\left\|f_i\right\|_2^2 - \prod_{i=1}^d\left(\left\| f_i\right\|_2^2 - \frac{L^2}{12b^2} \right)\right)
        &= \prod_{i\neq j} \left\|f_i\right\|_2^2 - \prod_{i\neq j} \left(\left\| f_i\right\|_2^2 - \frac{L^2}{12b^2} \right)\\
        &\ge 0.
      \end{align*}
      Thus we can find an upper bound to the RHS of \eqref{eqn:lipapprox} by maximizing $\|f_i\|_2^2$ over $f_i$, thus yielding
      \begin{equation*}
        \left \| \prod_{i=1}^d f_i - \proj_{\sH_{d,b}}\prod_{i=1}^d f_i \right\|_2^2 \le m_L^{2d}- \left(m_L^2 - \frac{L^2}{12b^2}  \right) ^d.
      \end{equation*}
    \end{proof}

    Calculating the value of $M_L$ is quite involved and is done in the next subsection. However we will first present the following result which gives a more tractable bound on bias.
    \begin{prop}
      \label{prop:l2projbnd}
      Let $L\geq 2$, $b^2\geq L^2/12$, and let $f_1,\ldots,f_d$ be elements of $\lip_L$. Then 
      \begin{equation*}
        \left \| \prod_{i=1}^d f_i - \proj_{\sH_{d,b}^1}\prod_{i=1}^d f_i \right\|_2^2 \le\frac{dL^{\frac{d+3}{2}}}{12b^2}\left[\frac{\sqrt{8}}{3}\right]^{d-1}.
      \end{equation*}
      If $0 \le L\leq 2$ instead of $L\ge 2$, then
      \begin{equation*}
        \left \| \prod_{i=1}^d f_i - \proj_{\sH_{d,b}^1}\prod_{i=1}^d f_i \right\|_2^2
        \leq d\frac{L^2}{12b^2} \exp\left (\frac{(d-1)L^2}{12}\right).
      \end{equation*}
    \end{prop}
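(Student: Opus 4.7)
}
The plan is to start from the bound delivered by Theorem~\ref{thm:rank-one-l2-bias}, namely
\begin{equation*}
\left \| \prod_{i=1}^d f_i - \proj_{\sH_{d,b}^1}\prod_{i=1}^d f_i \right\|_2^2 \le m_L^{2d}- \left(m_L^2 - \frac{L^2}{12b^2}\right)^d,
\end{equation*}
and turn the difference $m_L^{2d} - (m_L^2 - L^2/(12b^2))^d$ into a single-term bound. Set $a=m_L^2$ and $c = L^2/(12b^2)$; the task reduces to controlling $a^d-(a-c)^d$ in terms of a power of $m_L^2$ and a single factor of $c$.

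The key elementary step is the inequality $a^d - (a-c)^d \le d\, c\, a^{d-1}$, valid whenever $0 \le c \le a$. I would justify it in one line via the mean value theorem applied to $t\mapsto t^d$, noting that the derivative $d\,\xi^{d-1}$ is maximized at $\xi = a$ on the interval $[a-c, a]$. To verify the hypothesis $c\le a$, I use that $m_L^2 \ge 1$ (an immediate consequence of H\"older's inequality $1 = \|f\cdot \1\|_1 \le \|f\|_2\|\1\|_2$, as noted in the proof of Theorem~\ref{thm:rank-one-l2-bias}) together with the assumption $b^2 \ge L^2/12$, which gives $c\le 1\le a$.

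Substituting yields the distribution-free bound $\frac{dL^2}{12b^2}\,m_L^{2(d-1)}$. The two cases then follow by plugging in the explicit values from Proposition~\ref{prop:l2bias}. For $L\ge 2$, $m_L^2 = \sqrt{8L/9}$, so
\begin{equation*}
m_L^{2(d-1)} = \left(\frac{8L}{9}\right)^{(d-1)/2} = L^{(d-1)/2}\left(\frac{\sqrt{8}}{3}\right)^{d-1},
\end{equation*}
and combining the powers $L^2 \cdot L^{(d-1)/2} = L^{(d+3)/2}$ gives exactly the first inequality of the proposition. For $0 \le L\le 2$, $m_L^2 = 1 + L^2/12$, and applying the standard estimate $(1+x)^{d-1}\le e^{(d-1)x}$ with $x = L^2/12$ immediately gives the second inequality.

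There is no real obstacle here — the proof is essentially an algebraic simplification of Theorem~\ref{thm:rank-one-l2-bias}. The only care needed is in verifying the sign conditions ($c\le a$, and each $\|f_i\|_2^2 - L^2/(12b^2) \ge 0$ as already established in the proof of Theorem~\ref{thm:rank-one-l2-bias}) so that the monotonicity of $t\mapsto t^{d-1}$ can be invoked without issue, and in keeping track of the exponents of $L$ when separating the factor coming from $c$ from that coming from $m_L^{2(d-1)}$.
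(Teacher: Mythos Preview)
Your proposal is correct and follows essentially the same route as the paper: apply Theorem~\ref{thm:rank-one-l2-bias}, bound $a^d-(a-c)^d$ by $d\,c\,a^{d-1}$, and then plug in the explicit values of $m_L^2$ from Proposition~\ref{prop:l2bias}. The only cosmetic difference is that the paper obtains the inequality $a^d-(a-c)^d\le d\,c\,a^{d-1}$ via the factorization $a^d-b^d=(a-b)\sum_{i=0}^{d-1}a^ib^{d-1-i}$ rather than the mean value theorem, but this is the same elementary fact.
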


    \begin{proof}[Proof of Proposition \ref{prop:l2projbnd}]
      For the $L\geq 2$ case, applying Theorem \ref{thm:rank-one-l2-bias} and Proposition \ref{prop:M_L} gives us
      \begin{equation*} 
        \left \| \prod_{i=1}^d f_i - \proj_{\sH_{d,b}^1}\prod_{i=1}^d f_i \right\|_2^2 
        \le \left[\frac{2\sqrt{2L}}{3}\right]^d-\left[\frac{2\sqrt{2L}}{3}-\frac{L^2}{12b^2}\right]^d.
      \end{equation*}
        We will use the identity $a^n-b^n=(a-b)\sum_{i=0}^{n-1}a^{i}b^{n-1-i}$ with $a= \frac{2\sqrt{2L}}{3}$ and $b=\frac{2\sqrt{2L}}{3}-\frac{L^2}{12b^2}$. Note that $a>b>0$. From $a^d-b^d=(a-b)\sum_{i=0}^{d-1}a^{i}b^{d-1-i}$ we have
      $$
      a^d-b^d=(a-b)\sum_{i=0}^{d-1}a^{i}b^{d-1-i}<(a-b)\sum_{i=0}^{d-1}a^{i}a^{d-1-i} = d(a-b)a^{d-1}
      $$
      and thus
      \begin{align*}
        \left[\frac{2\sqrt{2L}}{3}\right]^d-\left[\frac{2\sqrt{2L}}{3}-\frac{L^2}{12b^2}\right]^d
        &\leq d\frac{L^2}{12b^2}\left[\frac{2\sqrt{2L}}{3}\right]^{d-1}\\
        &=\frac{dL^{\frac{d+3}{2}}}{12b^2}\left[\frac{\sqrt{8}}{3}\right]^{d-1}.
      \end{align*}
      The other case proceeds analogously. If $L=0$ the proposition statement is trivial, so we can assume $L>0$ and thus $a>b>0$ as before. Now we can finish with
      \begin{align*}
        \left \| \prod_{i=1}^d f_i - \proj_{\sH_{d,b}^1}\prod_{i=1}^d f_i \right\|_2^2 
        &\le \left[\frac{L^2}{12}+1\right]^d-\left[\frac{L^2}{12}+1-\frac{L^2}{12b^2}\right]^d \\
        &\le d\frac{L^2}{12b^2} \left[\frac{L^2}{12}+1\right]^{d-1}\\
        &\leq d\frac{L^2}{12b^2} \exp\left (\frac{(d-1)L^2}{12}\right).
      \end{align*}
    \end{proof}

    We find that the rate of the convergence of this bias term doesn't depend on the Lipschitz constants nor dimension and always has order $O\left(b^{-2}\right)$. Applying a square root and applying H\"older's Inequality as we did in \eqref{eqn:holders} we find that the $L^1$ bias convergence rate is $O(b^{-1})$, regardless of dimension.

    \subsubsection{Maximizing the $L^2$ norm of a Lipschitz density}

    In this section, we compute the value of $m_L$, the largest possible $L^2$ norm of an $L$-Lipschitz density function on $[0,1]$. We will show that if $L\leq 2$, $m_L^2=L^2/12+1$, and if $L\geq 2$, $m_L^2=2\sqrt{2L}/3$ (i.e. Proposition \ref{prop:l2bias}). This will follow directly from the more general Proposition~\ref{prop:M_L} below.

    We will need several lemmas, starting with a discretization of the problem.

    For the following results we are going to use a notion of \emph{monotonic rearrangement}. For a finite sequence of real numbers $(y_1,\ldots,y_n)$ its rearrangement $R(y) = \widetilde{y} = (\widetilde{y}_1,\ldots,\widetilde{y}_m)$ which is a reordering of $y$ so that it is increasing, it essentially sorts $y$ to be ascending. Interestingly this concept can also be applied to functions \cite{Kawohl1985}. It is usually defined so that the function is decreasing for the functional case. Interestingly if one monotonically rearranges a Lipschitz continuous function, $f:\rn^+ \to \rn$ for example, then its rearrangement will also be Lipschitz continuous with a Lipschitz constant no larger than that for $f$ (and potentially smaller).
    The following lemma says that the discrete equivalent of the ``Lipschitz constant'' of a sequence is always higher than that of its monotone reordering. 

    \begin{lem}
      \label{lem:monotonize}
      Let $y=(y_1,\ldots,y_n)$ be a sequence of real numbers, and let $\widetilde{y}=(\widetilde{y}_1,\ldots,\widetilde{y}_n)$ be its monotone reordering so that $\widetilde{y}_1\leq \widetilde{y}_2\leq\cdots\leq \widetilde{y}_n$. We have 
      $$\max_{i\leq n-1} |y_{i+1}-y_i|\geq \max_{i\leq n-1} \widetilde{y}_{i+1}-\widetilde{y}_i.$$
    \end{lem}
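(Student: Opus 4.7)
The plan is to prove the lemma by identifying the gap $\widetilde{y}_{i^*+1}-\widetilde{y}_{i^*}$ that achieves the maximum on the sorted side, and then exhibiting a pair of adjacent indices in the original sequence $y$ whose values straddle that gap. More precisely, let $i^*$ be an index maximizing $\widetilde{y}_{i+1}-\widetilde{y}_i$, set $\alpha = \widetilde{y}_{i^*}$ and $\beta = \widetilde{y}_{i^*+1}$, and define
\[
A = \{ j \in [n] : y_j \le \alpha \}, \qquad B = \{ j \in [n] : y_j \ge \beta \}.
\]
Because $\alpha$ and $\beta$ are consecutive values of the sorted sequence, no entry of $y$ lies strictly between them, so $A \cup B = [n]$; and $A, B$ are both nonempty since they respectively contain at least one preimage of $\alpha$ and $\beta$.

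Next I would observe that any partition of $[n]$ into two nonempty subsets must have at least one adjacent pair of indices $k, k+1$ split between the two sets. Formally, as we scan $1, 2, \ldots, n$, membership cannot remain constant throughout, so there is some $k$ with $\{k,k+1\}$ intersecting both $A$ and $B$. For this $k$, either $y_k \le \alpha$ and $y_{k+1} \ge \beta$, or the symmetric situation holds; in both cases,
\[
|y_{k+1} - y_k| \;\ge\; \beta - \alpha \;=\; \widetilde{y}_{i^*+1} - \widetilde{y}_{i^*} \;=\; \max_{i\le n-1}(\widetilde{y}_{i+1}-\widetilde{y}_i).
\]
Taking the maximum over $k$ on the left yields the desired inequality.

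This proof is really a short combinatorial argument, so I do not anticipate any serious obstacle. The only subtlety is making sure that $A$ and $B$ are both nonempty and cover $[n]$, which relies on the fact that $\alpha,\beta$ actually appear as values of $y$ (hence their preimages lie in $A$ and $B$) and on the fact that the sorted sequence leaves no values strictly between consecutive entries. An alternative route would be a bubble-sort / adjacent-transposition induction showing that swapping an out-of-order pair can only decrease the maximum consecutive gap, but the partition argument above is cleaner and avoids routine case analysis on three adjacent gaps.
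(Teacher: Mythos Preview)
Your proof is correct and follows essentially the same argument as the paper: define the index $i^*$ achieving the maximum sorted gap, split $[n]$ into $A=\{j:y_j\le\widetilde y_{i^*}\}$ and $B=\{j:y_j\ge\widetilde y_{i^*+1}\}$, and find an adjacent pair straddling the two sets. The only cosmetic difference is that the paper explicitly dispatches the trivial cases ($n=1$ or $y$ constant) up front, whereas you handle them implicitly; your remark that $A,B$ form a ``partition'' is strictly correct only when $\alpha<\beta$, but the $\alpha=\beta$ case makes the inequality trivial anyway.
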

    \begin{proof}[Proof of Lemma \ref{lem:monotonize}]
      If $y$ is constant or $n=1$ then we are done, so we will assume $y$ is nonconstant. Let $i^*$ be such that $\widetilde{y}_{i^*+1}-\widetilde{y}_{i^*}=\max_{i\leq n-1} \widetilde{y}_{i+1}-\widetilde{y}_i$. Define the sets 
      \begin{align*}
        A:&=\{i\in \{1,2,\ldots,n\}: y_i\leq \widetilde{y}_{i^*}\}\\
        B:&=\{i\in \{1,2,\ldots,n\}: y_i\geq \widetilde{y}_{i^{*}+1}\}.
      \end{align*}
      These sets partition $[n]$ so $A\sqcup B=\{1,2,\ldots,n\}$ and $A$ and $B$ are non empty. Thus there must exist $j\in [n]$ such that $j\in A$ and $j+1\in B$ or vice versa. For the first case we have that
      $$
      y_{j+1} - y_j \ge \widetilde{y}_{i^*+1}-\widetilde{y}_{i^*} = \max_{i\leq n-1} \widetilde{y}_{i+1}-\widetilde{y}_i
      $$
      and for the vice versa case we have
      $$
      \left|y_{j+1} - y_j\right|=y_j- y_{j+1} \ge \widetilde{y}_{i^*+1}-\widetilde{y}_{i^*} = \max_{i\leq n-1} \widetilde{y}_{i+1}-\widetilde{y}_i.
      $$
      So we have demonstrated that there exits a $j$ such that $ \left|y_{j+1} - y_j\right| \ge \max_{i\leq n-1} \widetilde{y}_{i+1}-\widetilde{y}_i$.
    \end{proof}

    We are now in a position to prove the following discrete version of Proposition~\ref{prop:M_L} below. 

    \begin{prop}
      \label{prop:discrete}
      Let $n\in \mathbb{N}_{\ge 2}$, $U>0,L>0$, and let $S_n$ denote the set of sequences $y=(y_1,\ldots,y_n)$ such that the following conditions are satisfied: 
      \begin{align} 
        y_i & \geq 0 \quad \quad (\forall i) \label{eqn:discrete-nonneg} \\
        \left|y_{i+1}-y_i\right|&\leq L \quad \quad (\forall   1\leq i\leq n-1 ) \label{eqn:discrete-lipschitz}  \\
        \sum_{i=1}^n y_i&=U. \label{eqn:discrete-sum}
      \end{align}

      A sequence $y^{*}=(y_1,\ldots,y_n)\in S_n$, which maximizes the quantity $\sum_{i=1}^n y_i^2$ subject to the conditions above is given by the following formulae. 

      If $L\le \frac{2U}{n(n-1)}$, for all $i$, 
      \begin{align}
        \label{eqn:discretesmallL}
        y^*_i=(i-1)L+\left[U/n-(n-1)L/2\right].
      \end{align}
      If $L\ge \frac{2U}{n(n-1)}$, for all $i$, 
      \begin{align}
        y^*_i=\left((i-n+m)L+U/m-L(m+1)/2\right)_+,
        \label{eqn:discretelargeL}
      \end{align}
      where $x_+$ denotes the positive part of $x$ and $m$ is the smallest integer such that $m(m+1)L\geq 2U$.
    \end{prop}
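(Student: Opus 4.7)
The plan is to exploit monotone rearrangement, convex maximization, and majorization.

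First, I would use Lemma \ref{lem:monotonize} to reduce to nondecreasing sequences: for any $y\in S_n$, the monotone rearrangement $\widetilde{y}$ is still in $S_n$ (the conditions \eqref{eqn:discrete-nonneg} and \eqref{eqn:discrete-sum} are permutation invariant, and the lemma shows the discrete Lipschitz constant in \eqref{eqn:discrete-lipschitz} can only decrease), and $\sum \widetilde{y}_i^2=\sum y_i^2$. It therefore suffices to maximize over the set $\mathcal{P}$ of nondecreasing elements of $S_n$, which is a compact convex polytope in $\rn^n$. Because the objective $y\mapsto \sum y_i^2$ is strictly convex and continuous, the maximum over $\mathcal{P}$ is attained at an extreme point.

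The core claim is that the proposed $y^{*}$ lies in $\mathcal{P}$ (which follows by direct verification from the definitions of $m$ and $\alpha := U/m - L(m-1)/2$) and \emph{majorizes} every $y\in\mathcal{P}$. Applying Karamata's inequality with the convex function $\phi(x)=x^2$ would then give $\sum (y^{*}_i)^2\ge \sum y_i^2$ for all $y\in\mathcal{P}$. Since both $y^{*}$ and $y$ are nondecreasing, the required majorization is equivalent to showing that for every $k\in[n]$,
\[
F_k(y^{*})\ge F_k(y),\qquad \text{where } F_k(y) := \sum_{j=k}^n y_j.
\]
Each $F_k$ is linear, so by convexity it is enough to check that no feasible direction at $y^{*}$ strictly increases $F_k$; i.e., $\sum_{j=k}^n v_j\le 0$ for every direction $v$ with $y^{*}+tv\in\mathcal{P}$ for small $t>0$.

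To do this I would enumerate the active constraints at $y^{*}$. In Case 2 (with $\alpha\in(0,L)$, the representative subcase), the active constraints are $y_1=0$, $d_j=0$ for $j\le n-m-1$, and $d_j=L$ for $j\ge n-m+1$, so any feasible direction $v$ must satisfy: $v_1\ge 0$, $v_{j+1}\ge v_j$ for $j\le n-m-1$, $v_{j+1}\le v_j$ for $j\ge n-m+1$, and $\sum_j v_j=0$. Thus $v$ is nonnegative and nondecreasing on $[1,n-m]$ and nonincreasing on $[n-m+1,n]$. The inequality $\sum_{j=k}^n v_j\le 0$ would then follow by splitting on $k$: for $k\le n-m+1$ it is immediate from $\sum_{j<k}v_j\ge 0$ together with the sum constraint, while for $k>n-m+1$ it follows from the fact that a nonincreasing sequence whose overall sum is $\le 0$ has nonpositive tail sums (equivalently, its partial sums are discretely concave and vanish at both endpoints). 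The degenerate cases $\alpha\in\{0,L\}$ only add extra active constraints that shrink the cone of feasible directions, preserving the conclusion; Case 1 reduces to the same discrete-concavity argument with the whole of $v$ nonincreasing.

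Finally, the explicit formulae are routine verifications: in Case 1 all gaps are $L$, and solving $\sum y_i=U$ gives the arithmetic progression \eqref{eqn:discretesmallL}; in Case 2 the choice of $m$ as the smallest integer with $m(m+1)L\ge 2U$ is exactly what makes the transitional gap $\alpha=U/m-L(m-1)/2$ lie in $[0,L]$, and rewriting $y^{*}_i=\max\bigl(0,\alpha+(i-n+m-1)L\bigr)$ yields \eqref{eqn:discretelargeL}. The hard part will be the feasible-direction analysis for the $F_k$'s; the overall structure is clean, but the bookkeeping of active constraints, together with the discrete-concavity argument for nonincreasing tails, must be handled carefully, especially near the boundary cases $\alpha\in\{0,L\}$ and at the junction $L=2U/(n(n-1))$ between Cases 1 and 2 where the two formulae must agree.
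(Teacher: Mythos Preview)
Your proposal is correct and takes a genuinely different route from the paper. After the same reduction to nondecreasing sequences via Lemma~\ref{lem:monotonize}, the paper proceeds by a \emph{local exchange} argument: it defines an ``$\fL$-sequence'' (nondecreasing, with every step either $0$ at a zero entry or exactly $L$), verifies by direct computation that $y^{*}$ is the unique such sequence, and then shows that any nondecreasing maximizer must be an $\fL$-sequence by exhibiting, whenever some gap satisfies $d_{i_*}<L$ with $\zeta_{i_*}>0$, an explicit perturbation that moves a small amount of mass from index $i_*$ to $i_*+1$, stays feasible, and strictly increases $\sum\zeta_i^2$. Your approach instead proves the global statement that $y^{*}$ \emph{majorizes} every feasible nondecreasing $y$, by checking that $y^{*}$ simultaneously maximizes each linear tail functional $F_k$ over the polytope $\mathcal{P}$ via a feasible-direction analysis, and then invokes Karamata. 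The paper's argument is more elementary and self-contained (no majorization theory, just a two-index swap and a short feasibility check), while yours is more structural: it shows at once that $y^{*}$ maximizes $\sum\phi(y_i)$ for \emph{every} convex $\phi$, not just $\phi(t)=t^2$, at the cost of the bookkeeping you flag around the active constraints and the discrete-concavity tail argument. One small wording issue: in your parenthetical for $k>n-m+1$ the right endpoint of the partial sums is only $\le 0$, not $=0$, but concavity with $P_0=0$ and $P_p\le 0$ still gives $P_{q}\ge P_p$, which is what you need.
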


    \begin{proof}[Proof of Proposition \ref{prop:discrete}]
      First note that $S_n$ is compact since the intersection of the closed sets denoted by \eqref{eqn:discrete-nonneg} and \eqref{eqn:discrete-sum} is compact and the set denoted by \eqref{eqn:discrete-lipschitz} is closed and thus further intersecting with that set gives us compact set. Because $y\mapsto \sum_{i=1}^n y_i^2 = \left\|y\right\|_2^2$ is continuous it must therefore attain a maximum on $S_n$. Let $\widetilde{\why}\in S_n$ be some sequence which attains the maximum. Define the sequence $\why$ as a monotone increasing (non-decreasing) rearrangement of $\widetilde{\why}$. By Lemma~\ref{lem:monotonize}, we have  $\why\in S_n$ and clearly $\left\|\why\right\|_2^2 = \left\|\widetilde{\why}\right\|_2^2$ so $\why$ is also a maximizer.

      Consider the following class of sequences. 
      
      \textit{$\fL$-sequences:} A sequence $x= \left(x_1,\ldots,x_n\right)$ is an $\fL$-sequence if
      \begin{enumerate}[nosep]
        \item $x \in S_n$.
        \item $x$ is non-decreasing. 
        \item For all $i < n$, $x_i=0$ or $x_{i+1}-x_i=L$.
      \end{enumerate}
      We are first going to show that $y^*$ is the only element in this class.
      
      Consider some arbitrary $\fL$-sequence $y$ be and let $k=\min(i: y_i\neq 0)$. We have $y_i-y_k=L(i-k)$ for all $i\geq k$ and therefore
      \begin{align}
        U
        &=\sum_{i=1}^n y_i\notag \\
        &= \sum_{i=k}^n y_k + L\sum_{i=k+1}^n (i-k) \label{eqn:badsum}\\
        \Rightarrow U&=(n-k+1)y_k+ \frac{L(n-k)(n-k+1)}{2}. \label{eqn:intermediate}
      \end{align} 
      One can check that \eqref{eqn:intermediate} holds when $k=n$, even though the summation in \eqref{eqn:badsum} is ill-posed. We will now split into two cases, corresponding to \eqref{eqn:discretesmallL} and \eqref{eqn:discretelargeL}, to show that there is only one $\fL$-sequence for fixed $U,L,n$ and that this sequence is given by $y^*$. 
      
      \textbf{Case 1: $\frac{Ln(n-1)}{2} <  U$.}\newline
     If $\frac{Ln(n-1)}{2}< U$ then \eqref{eqn:intermediate} cannot hold unless $k=1$ because if $k\ge 2$ then $y_k\leq L$ (since $y_{k-1}=0$) and 
     \begin{align*}
        U 
        &= (n-k+1)y_k+ \frac{L(n-k)(n-k+1)}{2}\\
        &\leq (n-1)L+ \frac{L(n-2)(n-1)}{2}\\
        &=\frac{Ln(n-1)}{2}< U,
     \end{align*}
     a contradiction.
     
     Solving for $y_k$ in \eqref{eqn:intermediate} with $k=1$ we get 
               \begin{align*}
                y_k=y_1=\frac{U}{n}-\frac{L(n-1)}{2},
              \end{align*}
      so $k$ and $y_k$ are both unique, thus $S_n$ contains only one element. We see that this coincides with the expression for $y^*$ in \eqref{eqn:discretesmallL} by noting that the derived $y_1$ is equal to $y^*_1$. The situation where $L= \frac{2U}{n(n-1)}$ in \eqref{eqn:discretesmallL} will be addressed at the end of Case 2.
        
     \textbf{Case 2: $\frac{Ln(n-1)}{2}\ge U$.}\newline
     For this case we need $k\geq 2$ since $y_k>0$ and letting $k=1$ in \eqref{eqn:intermediate} yields
     \begin{align*}
         U 
         &= (n-k+1)y_k+ \frac{L(n-k)(n-k+1)}{2}\\
         &> \frac{Ln(n-1)}{2} \ge U
     \end{align*}
     a contradiction.
     This implies $y_{k-1}=0$, so $ y_k\leq L$. Define $m \triangleq n-k+1$. From \eqref{eqn:intermediate} we have
     \begin{align}
         U
         &= (n-k+1)y_k+ \frac{L(n-k)(n-k+1)}{2} \notag\\
         &= m y_k+ \frac{L(m-1)m}{2}\label{eqn:case2}\\
         &= m\left(y_k+\frac{L(m-1)}{2}\right). \notag
     \end{align}
     
     Since $y_k > 0$ we have $U \ge \frac{Lm(m-1)}{2}$. Additionally since $y_k \le L$,
     \begin{align*}
         U
         &\le m\left(L+\frac{L(m-1)}{2}\right)\\
         &= \frac{Lm(m+1)}{2}.
     \end{align*}
     It now follows that $m$  must be the smallest integer such that $\frac{Lm(m+1)}{2}\ge U$.
     
     Solving for $y_k$ in \eqref{eqn:case2} we have
              \begin{align*}
                y_k
                =\frac{U}{m}-\frac{L(m-1)}{2},
              \end{align*}
    so both $k$ and $y_k$ are unique so there is only one element in $S_n$. Further note that
              \begin{align*}
                y_k
                &=\frac{U}{m}-\frac{L(m-1)}{2}\\
                &=\frac{U}{m}-\frac{L(m+1)}{2}+L.
              \end{align*}
     From the definition of $S_n$ it follows that 
     \begin{align*}
         y_i 
         &= \left( L (i-k) + y_k \right)_+\\
         &= \left( L (i-n+m-1) + \frac{U}{m}-\frac{L(m+1)}{2}+L \right)_+\\
         &= \left( L (i-n+m) + \frac{U}{m}-\frac{L(m+1)}{2} \right)_+
     \end{align*}
      which coincides with $y^*$  from \eqref{eqn:discretelargeL}. 
      
      To finish this case we will show that when $\frac{Ln(n-1)}{2}= U$ then \eqref{eqn:discretesmallL} equals \eqref{eqn:discretelargeL}. To see this first note that $\frac{Ln(n-1)}{2}= U$ implies $m = n-1$. Therefore \eqref{eqn:discretelargeL} equals
      \begin{align*}
          y^*_i
          &= \left((i-n+m)L+\frac{U}{m}-\frac{L(m+1)}{2}\right)_+\\
          &= \left((i-1)L+\frac{U}{n-1}-\frac{Ln}{2}\right)_+\\
          &= \left((i-1)L+\frac{Ln}{2}-\frac{Ln}{2}\right)_+\\
          &= (i-1)L
      \end{align*}
     and accordingly \eqref{eqn:discretesmallL} equals
     \begin{align*}
         y^*_i
         &=(i-1)L+\left[\frac{U}{n}-\frac{(n-1)L}{2}\right] \\
         &=(i-1)L+\left[\frac{(n-1)L}{2}-\frac{(n-1)L}{2}\right] \\
         &= (i-1)L.
     \end{align*}
     This finishes Case 2. 
        
        We now have that $y^*$ satisfies property 1 of $\fL$-sequences (in particular \eqref{eqn:discrete-sum} was nontrivial) as well as properties 2 and 3.
        This concludes the proof that $y^*$ is the only $\fL$-sequence.

      Now we are going to show that $\why$ is also a an $\fL$-sequence which will complete this proof, since $y^*$ is the only $\fL$-sequence. For sake of contradiction assume that $\why$ is not an $\fL$-sequence. From this there must exist $i_*\in \{1,2,\ldots,n-1\}$ such that $\why_{i_*+1}-\why_{i_*}<L$ and $\why_{i_*}\neq 0$. We then define a sequence $t$ by:
      \begin{equation}
        t_{i}=
        \begin{cases}\label{sol}
          \why_i                 &\text{if} \quad i\neq i_*,i_*+1\notag\\
          \why_{i_*}-\delta      &\text{if} \quad i= i_*\notag \\
          \why_{i_*+1}+\delta    &\text{if} \quad i= i_*+1,
        \end{cases}
      \end{equation}
      where $\delta=\min\left(\frac{L-\why_{i_*+1}+\why_{i_*}}{2}, \why_{i_*}  \right)$. Note that $0< \delta \le L/2$, a fact which we will be using extensively. 

      We will now show that $t\in S_n$.

      \paragraph{$t$ satisfies \eqref{eqn:discrete-nonneg}:}
      Note that by construction, since $\delta\leq \why_{i_*}$ and $t_{i_*}\geq 0$ (and clearly, $t_{i}\geq 0$ for $i\neq i_*$).

      \paragraph{$t$ satisfies \eqref{eqn:discrete-lipschitz}:}
      The sequences $t$ and $\why$ differ on only two indices so \eqref{eqn:discrete-lipschitz} holds trivially for any pair of indices not involving $i_*$ or $i_*+1$. We will address the remaining cases:

      \textbf{Case 1 $\left|t_{i_*+1 }-t_{i_*}\right|$:}  We now have that
      \begin{align*}
        t_{i_*+1}-t_{i_*}
        =& \why_{i_*+1}-\why_{i_*}+2\delta\\
        \leq& \why_{i_*+1}-\why_{i_*}+L-\why_{i_*+1}+\why_{i_*}\\
        =&L.
      \end{align*}
      Since $\why$ is monotonic and $\delta >0 $ we further have that $\why_{i_*+1}-\why_{i_*}+2\delta>0$ thereby finishing this case.

      \textbf{Case 2 $\left|t_{i_*+2 }-t_{i_*+1}\right|$:}
      We have that
      \begin{align*}
        t_{i_*+2}-t_{i_*+1}
        =&\why_{i_*+2}-\why_{i_*+1}-\delta \\
        \geq& -\delta\\
        \geq&  -L/2
      \end{align*}
      and 
      \begin{equation*}
        t_{i_*+2}-t_{i_*+1}
        = \left(\why_{i_*+2}-\why_{i_*+1}\right)- \delta
        \leq L
      \end{equation*}
      thereby completing this case.

      \textbf{Case 3: $\left|t_{i_* }-t_{i_*-1}\right|$:}
      This is virtually identical to the previous case so we omit it.

      Thus, we have that $t$ satisfies condition \eqref{eqn:discrete-lipschitz}.

      \paragraph{$t$ satisfies \eqref{eqn:discrete-sum}:} finally, it is clear that $t_{i_*}+t_{i_*+1}= \why_{i_*}+\why_{i_*+1}$ and thus $\sum_{i} t_i=\sum_{i} \zeta_i=U$.

      Hence, $t$ satisfies \eqref{eqn:discrete-sum} and we have proved the claim that $t\in S_n$.

      Now, writing $\emi$ for $\frac{\why_{i_*+1}+\why_{i_*}}{2}$ and $\Delta$ for $\frac{\why_{i_*+1}-\why_{i_*}}{2}$:
      \begin{align*}
        \sum_{i=1}^n (t_i^2- \why_{i}^2)&= t_{i_*+1}^2+t_{i_*}^2-\why_{i_*+1}^2-\why_{i_*}^2 \\
        & = (\why_{i_*+1}+\delta)^2+(\why_{i_*}-\delta)^2-\why_{i_*+1}^2-\why_{i_*}^2  \\
        & = (\emi+\Delta+\delta)^2 + (\emi-\Delta-\delta)^2 -(\emi+\Delta)^2 - (\emi-\Delta)^2 \\
        & = 2\emi^2+2 (\Delta+\delta)^2 -[2\emi^2+2 \Delta^2]= 4\Delta\delta +2\delta^2 \geq 2\delta^2 >0,
      \end{align*}
      where at the last line, we have used the fact that since by construction, since $\zeta \neq y^*$, we have $\delta>0$. This shows that: 
      \begin{equation*}
        \sum_{i=1}^n t_i^2 > \sum_{i=1}^n \why_i^2,
      \end{equation*}
      a contradiction. Hence we conclude that $\zeta$ must equal $y^*$. 
    \end{proof}
    
    We can now proceed with the statement and proof of the continuous case.
    \begin{prop}
      \label{prop:M_L}
      Let $L,U>0$ be given and let $S$ be the set of functions $f:[0,1]\rightarrow \mathbb{R}^+$ such that the following conditions are satisfied: 
      \begin{enumerate}[nosep]
        \item $f$ is Lipschitz continuous with Lipschitz constant $L$.
        \item $\int_{0}^1 f(x)dx=U$.
      \end{enumerate}
      Let $f_*^{U,L}$ be defined as follows
      \begin{alignat}{2}\label{eqn:definef*small}
        f_*^{U,L}(x)&=\left(x-\frac{1}{2}\right)L+U  &\quad \text{if } L\leq 2U\\                
        f_*^{U,L}(x)&=L\left(x-1+\sqrt{\frac{2U}{L}}\right)_+  &\text{if } L> 2U.        \label{eqn:definef*large}
      \end{alignat}
      Then $f_*^{U,L} \in \arg \max_{f \in S} \int_{0}^1f^2(x)dx$ and 
      \begin{equation*}
        \label{eqn:explicitform}
         \max_{f \in S} \int_{0}^1 f^2(x)dx = M(U,L) \triangleq \frac{W^3L^2}{12}+ \frac{U^2}{W},
      \end{equation*}
      where $W=1$ if $L\leq 2U$ and $W=\sqrt{\frac{2U}{L}}$ if $L> 2U$.
     Equivalently, $M=\frac{L^2}{12}+U^2$ if $L\leq 2U$ and $M=\frac{2\sqrt{2}}{3}\sqrt{L}U^{3/2}$ if $L\geq 2U$.
    \end{prop}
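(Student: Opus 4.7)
The plan is to deduce the continuous result from the discrete Proposition~\ref{prop:discrete} by a limit argument, combined with a direct verification that $f_*^{U,L}$ achieves the claimed value.

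First I will verify that $f_*^{U,L} \in S$: it is non-negative (using $L \le 2U$ in case 1 and the positive-part clipping in case 2), $L$-Lipschitz by construction, and a straightforward integration shows it integrates to $U$. I then compute $\int_0^1 (f_*^{U,L}(x))^2\,dx$ directly, obtaining $L^2/12 + U^2$ in the affine case via expanding $(L(x-1/2)+U)^2$ and using $\int_0^1(x-1/2)^2\,dx = 1/12$, and $L^2 W^3/3 = \tfrac{2\sqrt{2}}{3}\sqrt{L}\,U^{3/2}$ with $W = \sqrt{2U/L}$ in the clipped case via the substitution $u = x-1+W$. Both match $M(U,L)$.

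For the upper bound $\int f^2 \le M(U,L)$ over arbitrary $f \in S$, I first use monotone rearrangement (which preserves the Lipschitz constant, the integral, and the $L^2$-norm) to reduce to non-decreasing $f$. I then discretize by setting $y^{(n)}_i := f(i/n)$ for $i = 1, \ldots, n$. Monotonicity gives $y^{(n)}_i \ge f(x)$ for all $x \in [(i-1)/n, i/n]$, hence $\int_0^1 f^2 \le \tfrac{1}{n}\sum_i (y^{(n)}_i)^2$. The sequence $y^{(n)}$ satisfies the constraints of Proposition~\ref{prop:discrete} with Lipschitz parameter $L_n = L/n$ and sum $U_n := \sum_i y^{(n)}_i$, where the telescoping estimate $U_n - nU \le f(1) - f(0) \le L$ gives $nU \le U_n \le nU + L$. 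Applying Proposition~\ref{prop:discrete} yields $\sum_i (y^{(n)}_i)^2 \le \sum_i (y^{*,(n)}_i)^2$ with $y^{*,(n)}$ given explicitly by \eqref{eqn:discretesmallL} or \eqref{eqn:discretelargeL}. A direct substitution shows $\tfrac{1}{n}\sum_i (y^{*,(n)}_i)^2 \to M(U,L)$ as $n \to \infty$: the discrete maximizer $y^{*,(n)}_i$ coincides with $f_*^{U,L}(i/n)$ up to an $O(1/n)$ shift, and the sum becomes a Riemann approximation of $\int (f_*^{U,L})^2$.

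The main obstacle is the correspondence between the discrete case split (based on $L_n \le 2U_n/(n(n-1))$) and the continuous one (based on $L \le 2U$): these agree only asymptotically, and the cutoff index $m$ in \eqref{eqn:discretelargeL} grows like $n\sqrt{2U/L}$, so its rate must be tracked carefully to confirm the limiting value of the discrete maximum. The borderline case $L = 2U$ requires separate attention, though Proposition~\ref{prop:discrete} already shows that the two explicit formulas coincide at this boundary. The degenerate cases $U = 0$ or $L = 0$ reduce to $f \equiv 0$ or $f \equiv U$ respectively and can be handled directly. As an alternative to discretization, a direct variational perturbation argument (antisymmetric tilts of $f$ on a small interval where the Lipschitz constraint is not saturated) would also work but requires extra care to simultaneously maintain continuity, non-negativity, and the Lipschitz constraint at the endpoints of the tilt.
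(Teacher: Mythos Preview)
Your proposal is correct and follows the same route as the paper: verify directly that $f_*^{U,L}\in S$ with $\int_0^1 (f_*^{U,L})^2 = M(U,L)$, then for the upper bound discretize an arbitrary $f\in S$ at the points $i/n$, apply Proposition~\ref{prop:discrete}, and pass to the limit. The only technical differences are that the paper does not invoke monotone rearrangement at the continuous level (it uses two-sided Riemann-sum convergence rather than your one-sided inequality from monotonicity), and it handles the discrepancy between $\sum_i f(i/n)$ and $nU$ by shifting the discretized sequence up to have sum exactly $n(U+\varepsilon)$, applying the discrete result at $(U+\varepsilon,L)$, and then letting $\varepsilon\to 0$ via the explicit continuity of $M(\cdot,L)$---a clean way around the diagonal-limit issue you correctly flag as the main obstacle.
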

    
    \begin{remark*}[Proposition \ref{prop:M_L} applied to probability densities]\normalfont
        The highly general formation of Proposition \ref{prop:M_L} is useful for its proof. However in density estimation only the case where $f_*$ is a pdf and $U=1$ is particularly meaningful. This gives
        \begin{align*}
            f_*(x)&=\left(x-\frac{1}{2}\right)L        &     \left\|f_*\right\|_2^2 &= \frac{L^2}{12}+1 & & \text{if } L\leq 2\\                
            f_*(x)&=L\left(x-1+\sqrt{\frac{2}{L}}\right)_+ &  \left\|f_*\right\|_2^2 &= \frac{2\sqrt{2}}{3}\sqrt{L}   &   &\text{if } L> 2.
      \end{align*}
    \end{remark*}
    \begin{proof}[Proof of Proposition \ref{prop:M_L}]
      We first introduce some notation. For $n\in \nn$ and  $U',L'\ge 0$ we will denote by $\sG^n \left(U',L'\right)$ the nonnegative sequence $(y_1,y_2,\ldots,y_n)$ maximizing $\sum_{i=1}^n y_i^2$ subject to $\sum_{i=1}^n y_i\frac{1}{n}=U'$ and $|y_{i+1}-y_i|\leq \frac{L'}{n}$ (i.e. $\sG^{n}\left(U',L'\right)=y^*$ from Proposition~\ref{prop:discrete} with $U\leftarrow U'n$ and $L\leftarrow \frac{L'}{n}$). 
      We similarly write $M_n\left(U',L'\right)$  for the optimal value, i.e., $M_n\left(U',L'\right)=\sum_{i=1}^n (\sG^n \left(U',L'\right)_i)^2 \frac{1}{n}$.
      For ease of notation we set $\sG^n \triangleq \sG^n \left(U',L'\right)$, $M_n \triangleq M_n\left(U',L'\right)$, and $f_* = f_*^{U',L'} $. Later we will set $U'\leftarrow U$ and $L'\leftarrow L$ to prove the proposition statement, but it will be useful to establish some results for general $U'$ and $L'$. We use the prime symbol to help avoid confusion between proving the final proposition statement and proving supporting results. Unless otherwise specified, all limits are taken as $n \to \infty$.
      
      Let $\tsG^n$ be the piecewise linear function from $[0,1]$ to $\mathbb{R}^+$  with $\tsG^n(i/n)=\sG^n_i$ for $i\in [n]$ and $\tsG^n(0)=\sG^n_1$.
      \begin{lem}\label{lem:claim1}
        The following limits hold,
        \begin{gather*}
            \int_0^1 \tsG^n(x)dx \rightarrow U' \\
            \int_0^1\tsG^n(x)^2dx-M_n\rightarrow 0.
        \end{gather*}
     
      \end{lem}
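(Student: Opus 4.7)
The plan is to evaluate both integrals exactly using the piecewise linear structure of $\tsG^n$, then bound the resulting error terms using both the Lipschitz-type constraint $|\sG^n_{i+1}-\sG^n_i|\le L'/n$ and a simple uniform bound on the entries.

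For the first limit, I would apply the trapezoidal rule on the flat piece $[0,1/n]$ and on each of the linear pieces in between to obtain
\begin{equation*}
\int_0^1 \tsG^n(x)\,dx \;=\; \frac{\sG^n_1}{n} + \sum_{i=1}^{n-1}\frac{\sG^n_i+\sG^n_{i+1}}{2n} \;=\; U' + \frac{\sG^n_1 - \sG^n_n}{2n}.
\end{equation*}
Telescoping the Lipschitz bound gives $\max_i \sG^n_i - \min_i \sG^n_i \le L'$, and combined with $\frac{1}{n}\sum_i \sG^n_i = U'$ this forces $0 \le \sG^n_i \le U'+L'$ uniformly in $n$ and $i$. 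The boundary correction is therefore $O(1/n)$ and the first claim follows.

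For the second limit, using the elementary identity $\int_0^1((1-t)a+tb)^2\,dt=\frac{a^2+b^2}{2}-\frac{(a-b)^2}{6}$ on each subinterval $[i/n,(i+1)/n]$, then telescoping the sum over the averaged squares and separating off the flat piece at the left endpoint, yields
\begin{equation*}
\int_0^1 \tsG^n(x)^2\,dx \;=\; M_n + \frac{(\sG^n_1)^2 - (\sG^n_n)^2}{2n} - \frac{1}{6n}\sum_{i=1}^{n-1}(\sG^n_{i+1}-\sG^n_i)^2.
\end{equation*}
The final sum is bounded by $(L')^2/(6n^2)$ directly from the Lipschitz constraint, and the boundary term is bounded by $(U'+L')^2/n$ using the uniform bound just established. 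Both contributions vanish as $n\to\infty$, giving the second claim.

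The main work is purely algebraic, so I anticipate no serious obstacle beyond careful bookkeeping of the boundary contributions; the only conceptual input is the cheap a priori bound $\sG^n_i \in [0, U'+L']$, which falls out of the constraints defining $\sG^n$ without invoking the closed-form formulas of Proposition \ref{prop:discrete}.
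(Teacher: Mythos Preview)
Your proof is correct, but it takes a somewhat different route from the paper. The paper treats both statements as Riemann-sum error estimates: on each interval $[(i-1)/n,i/n]$ it bounds $|\tsG^n(x)-\sG^n_i|\le L'/n$ directly from Lipschitz continuity of $\tsG^n$, and for the second limit factors $\tsG^n(x)^2-(\sG^n_i)^2=(\tsG^n(x)-\sG^n_i)(\tsG^n(x)+\sG^n_i)$ and controls the second factor with the same uniform bound $\sG^n_i\le U'+L'$ that you derive. You instead exploit the piecewise-linear structure to evaluate both integrals \emph{exactly} via trapezoidal-type formulas, and then isolate explicit boundary and quadratic-variation corrections. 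Your approach yields sharper explicit error expressions (e.g.\ the exact boundary term $(\sG^n_1-\sG^n_n)/(2n)$), at the cost of being specific to the linear interpolant; the paper's argument is slightly cruder but would apply verbatim to any $L'$-Lipschitz interpolant of the nodes, not just the piecewise-linear one. Both rely on the same key ingredient, the uniform bound $0\le\sG^n_i\le U'+L'$, and both are equally short.
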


      \begin{proof}[Proof of Lemma \ref{lem:claim1}]
      Note that the function $\tsG^n$ is $L'$-Lipschitz. The Lipschitz continuity also implies the following bounds on the difference between the Riemann sums below and their corresponding integrals:
      \begin{align*}
        \left| \int_0^1\tsG^n(x)dx-U'\right|
        &= \left| \int_0^1\tsG^n(x)dx-\sum_{i=1}^n\sG^n_i\frac{1}{n} \right|\\
        &\le \sum_{i=1}^n \left| \int_{(i-1)/n}^{i/n}\tsG^n(x)dx -\sG^n_i\frac{1}{n} \right|\\
        &= \sum_{i=1}^n \left| \int_{(i-1)/n}^{i/n}\tsG^n(x) -\sG^n_i dx  \right|\\
        &\le \sum_{i=1}^n  \int_{(i-1)/n}^{i/n}\left| \tsG^n(x) -\sG^n_i\right| dx  \\
        &\le \sum_{i=1}^n  \int_{(i-1)/n}^{i/n}\frac{L'}{n} dx  \\
        &= n\frac{L'}{n^2}\rightarrow 0,
      \end{align*}
      and also 
      \begin{align*}
        \left|\int_0^1\tsG^n(x)^2dx- M_n\right|
        &= \left|\int_0^1\tsG^n(x)^2dx- \sum_{i=1}^n(\sG^n_i)^2\frac{1}{n}\right| \\
        &=\left|\sum_{i=1}^n\left[\int_{(i-1)/n}^{i/n}\tsG^n(x)^2-(\sG^n_i)^2dx\right]\right|\\
        &=\left|\sum_{i=1}^n\left[\int_{(i-1)/n}^{i/n}\left[\tsG^n(x)-(\sG^n_i)\right]\left[\tsG^n(x)+(\sG^n_i)\right]dx\right]\right|\\
        &\leq n\frac{1}{n}\frac{L'}{n}\max_{i}\left[2\sG^n_i\right]\\
        &\leq \frac{L'}{n} \left[2\left(U'+L'\right)\right]\rightarrow 0,
      \end{align*}
      where at the last line we have used the fact that $\sG^n_i\leq U'+L'$ for all $i$. This is because $\sum_{i=1}^n \sG^n_i=U'n$ implies there exists an $i_*$ such that $\sG^n_{i_*}\le U'$ and the Lipschitz condition further implies (for all $i$): 
      \begin{equation}
        \sG^n_i\leq \sG^n_{i_*}+\frac{L'}{n}|i-i_*|\leq U'+L'. \label{eqn:UL-bound}
      \end{equation}
      \end{proof}
      
      \begin{lem}\label{lem:claim2}
         The sequence of functions $\tsG^n\rightarrow f_*$ pointwise.
      \end{lem}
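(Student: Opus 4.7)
The plan is to apply the explicit formulas for $\sG^n$ provided by Proposition \ref{prop:discrete}, under the rescaling $U \leftarrow U'n$, $L \leftarrow L'/n$, and then pass to the limit entrywise. Under this rescaling the discrete threshold $L \le 2U/(n(n-1))$ becomes $L'(n-1)/n \le 2U'$, which in the limit separates the two cases $L' \le 2U'$ and $L' > 2U'$ of Proposition \ref{prop:M_L}. For any fixed $x \in [0,1]$, the $L'$-Lipschitz continuity of the piecewise-linear interpolant gives $|\tsG^n(x) - \sG^n_{\lceil nx \rceil}| \le L'/n \to 0$, so it suffices to show $\sG^n_{i_n} \to f_*(x)$ whenever $i_n/n \to x$.

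In the small-Lipschitz regime $L' \le 2U'$, I would apply formula \eqref{eqn:discretesmallL} (which holds for every $n$ since $L'(n-1)/n \le L' \le 2U'$) and directly read off
\[
\sG^n_{i_n} = (i_n - 1)\frac{L'}{n} + U' - \frac{(n-1)L'}{2n} \longrightarrow L'\bigl(x - \tfrac12\bigr) + U' = f_*(x).
\]
In the large-Lipschitz regime $L' > 2U'$, formula \eqref{eqn:discretelargeL} eventually applies. Setting $W = \sqrt{2U'/L'}$, the integer $m(n)$ defined as the smallest integer with $m(m+1) \ge 2U'n^2/L'$ satisfies $m/n \to W$; plugging in yields a limit of $\bigl(L'(x - 1 + W) + U'/W - L'W/2\bigr)_+$, and the identity $W^2 = 2U'/L'$ makes $U'/W$ and $L'W/2$ coincide and cancel, producing $f_*(x)$.

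The main obstacle I anticipate is bookkeeping rather than conceptual: one must track the integer $m = m(n)$ carefully (using $(m-1)m < 2U'n^2/L' \le m(m+1)$ to pin down $m/n \to W$) and confirm consistency at the boundary $L' = 2U'$, where the two candidate limits should agree (both reduce to $L'x$ on $[0,1]$). Continuity of $(\cdot)_+$ then transfers convergence through the positive part without fuss, and the Lipschitz bound from the first paragraph upgrades pointwise convergence of $\sG^n_{i_n}$ to pointwise convergence of $\tsG^n(x)$.
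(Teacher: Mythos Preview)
Your argument is correct and takes a genuinely different route from the paper. The paper bounds $|f_*(x)-\tsG^n(x)|$ via the inequality
\[
|f_*(x)-\tsG^n(x)|\le |f_*(0)-\tsG^n(0)|+\int_0^1|\partial_y f_*(y)-\partial_y\tsG^n(y)|\,dy,
\]
then shows in each case that the derivatives agree except on an interval of shrinking length (namely $[0,1/n]$ when $L'\le 2U'$, and an interval of length $O(\varepsilon)$ around the breakpoint $1-\sqrt{2U'/L'}$ when $L'>2U'$). You instead substitute the closed-form expressions from Proposition~\ref{prop:discrete} directly and pass to the limit termwise, using only the $L'$-Lipschitz bound $|\tsG^n(x)-\sG^n_{\lceil nx\rceil}|\le L'/n$ and continuity of $(\cdot)_+$. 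Both proofs require the same asymptotic $m/n\to W=\sqrt{2U'/L'}$ (the paper phrases it as $k/n\to 1-W$), and your observation that $U'/W=L'W/2$ is exactly the algebraic cancellation that collapses the limit to $f_*$. Your route is shorter and avoids the piecewise-derivative bookkeeping; the paper's derivative-integral route has the side benefit of yielding a uniform (in $x$) bound automatically, though only pointwise convergence is claimed. One small edge case: at $x=0$ the index $\lceil nx\rceil=0$ is undefined, but since $\tsG^n(0)=\sG^n_1$ by construction and $1/n\to 0$, your reduction still goes through with $i_n=1$.
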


      \begin{proof}[Proof of Lemma \ref{lem:claim2}]
       Given a continuous function $f:[0,1]\to \rn$, that is also differentiable on $(0,1)$, for all $x\in [0,1]$ the fundamental theorem of calculus implies
      \begin{align*}
          &\int_0^x \partial_y f(y) dy = f(x)-f(0)\\
          \Rightarrow &f(x) =  \int_0^x \partial_y f(y) dy + f(0)\\
          \Rightarrow& \left|f(x)\right| \le \int_0^x \left|\partial_y f(y)\right| dy + \left|f(0)\right| \le \int_0^1 \left|\partial_y f(y)\right| dy + \left|f(0)\right|.
      \end{align*}
      Suppose $f$ was not differentiable, but was still continuous, at some  $c \in (0,x)$. We would still have
      \begin{align*}
          f(x)
          &= f(x) - f(c) +f(c) \\
          &= \int_c^x \partial_y f(y) dy  + \int_0^c \partial_y f(y) dy + f(0)\\
          \Rightarrow \left|f(x)\right| 
          &\le \int_0^1 \left|\partial_y f(y)\right| dy + \left|f(0)\right|.
      \end{align*}
      Both $f_*$ and $\tsG^n$ are piecewise linear. They are therefore continuous and their derivatives exist on all but a finite set of points, so for all $x\leq 1$,
      \begin{align}
        \label{eq:simplification?}
        |f_*(x)-\tsG^n(x)|\leq \left| f_*(0)-\tsG^n(0)\right|+ \int_0^1|\partial_yf_*(y)-\partial_y\tsG^n(y)|dy.
      \end{align}

      We now go on with the proof of Lemma \ref{lem:claim2} in two cases. 

      \textit{Case 1: $L'\le  2U'$}
      
      Note that if $L'\le 2U'$, then for all $n\ge 2$ we have that $\frac{L'}{n}\leq \frac{2U'n}{n(n-1)}$ and thus $\sG^n$ is defined by \eqref{eqn:discretesmallL}. Then for any $n$ we have $\tsG^n(i/n)-\tsG^n((i-1)/n)=L'/n$ for all $i\geq 2$, and $\tsG^n(1/n)-\tsG^n(0/n)=0$. In particular, we have under these conditions that $\partial_x\tsG^n(x)=L'$ for all $x> 1/n$ and $\partial_x\tsG^n(x)=0$ for $x < 1/n$. Hence by \eqref{eq:simplification?} we have for all $x\le 1$,
      \begin{align*}
        |f_*(x)-\tsG^n(x)|
        &\leq \left| f_*(0)-\tsG^n(0)\right|+ \int_0^1|\partial_yf_*(y)-\partial_y\tsG^n(y)|dy\\
        &\leq \left|U'-\frac{1}{2}L'-\left[U'-\frac{n-1}{n}\frac{L'}{2}\right]\right|+\int_0^{1/n}|\partial_yf_*(y)-\partial_y\tsG^n(y)|dy\\
        &\leq \frac{L'}{2n}+\frac{L'}{n}\rightarrow 0.
      \end{align*}

      \textit{Case 2: $L'> 2U'$}
      
      Since $L' > 2U'$, for sufficiently large $n$ we have that  $\frac{L'}{n} > \frac{2U'n}{n(n-1)}$ and $\sG^n$ is defined using \eqref{eqn:discretelargeL}. Since we are interested the limit as $n\to \infty$ we will proceed using \eqref{eqn:discretelargeL}. As in the proof of Proposition~\ref{prop:discrete} we let $k$ be the smallest natural number with $\sG^n_k>0$, noting also that $k\ge 2$ (see Case 2 in that proof). 
      
      For all $i\leq k-1$ we have $\tsG^n(i/n)-\tsG^n((i-1)/n)=0$ so $\partial_x\tsG^n(x) = 0$ for all $x<(k-1)/n$. Similarly for all $i\geq k+1$ we have $\tsG^n(i/n)-\tsG^n((i-1)/n)=L'/n$ so $\partial_x\tsG^n(x) = L'$ for all $x> (k+1)/n$. We also have that $\tsG^n(0) = 0 $.

      By definition~\eqref{eqn:definef*large} we have $\partial_x f_*(x)=0$ for all $x< \left [1-\sqrt{\frac{2U'}{L'}}\right]$ and $\partial_x f_*(x)=L'$ for all $x> 1-\sqrt{\frac{2U'}{L'}}$. We also have $f_*(0)=0$.

      Again from the proof of Proposition~\ref{prop:discrete} we know $k=n-m+1$, where $m$ is the smallest integer such that $m(m+1)L'/n\geq 2U'n$. It follows that $m(m+1)/n^2 \to 2U'/L'$. Since $m \to \infty$ and $m^2/n^2$ doesn't diverge we have that $m/n^2 \to 0$ and thus $m/n\rightarrow \sqrt{2U'/L'}$. Substituting in $k = n-m+1$ gives $k/n\rightarrow 1-\sqrt{2U'/L'}$.
      
      Let $\varepsilon > 0$. For sufficiently large $n$ both $(k+1)/n$ and $(k-1)/n$ lie in $ \left[1-\sqrt{2U'/L'}-\varepsilon, 1-\sqrt{2U'/L'}+ \varepsilon \right]$. Letting $x\le 1$, for large enough $n$ the following holds 
      by \eqref{eq:simplification?}
      \begin{align*}
        |f_*(x)-\tsG^n(x)|&\leq \left| f_*(0)-\tsG^n(0)\right|+ \int_0^1|\partial_yf_*(y)-\partial_y\tsG^n(y)|dy\\
        &=\left| f_*(0)-\tsG^n(0)\right|+\int_{1-\sqrt{2U'/L'}-\varepsilon}^{1-\sqrt{2U'/L'}+ \varepsilon}|\partial_yf_*(y)-\partial_y\tsG^n(y)|dy\\
        &\le 2 \varepsilon \left(\max_y |\partial_yf_*(y)|+ |\partial_y\tsG^n(y)| \right)\\
        &\le 2 \varepsilon \left(L' + L' + U'\right) \quad \quad \quad  \text{using \eqref{eqn:UL-bound}}.
      \end{align*}
      Since this holds for all $\varepsilon>0$, $|f_*(x)-\tsG^n(x)|$ goes to zero, and we have pointwise convergence for Case 2.
      \end{proof}
      
        \begin{lem}\label{lem:claim3}
            We have that $M_n\rightarrow \int_0^1 f_*(x)^2 dx$.
        \end{lem}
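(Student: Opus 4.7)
The plan is to combine the two conclusions of Lemmas \ref{lem:claim1} and \ref{lem:claim2} via the dominated convergence theorem. By Lemma \ref{lem:claim1}, we have $\int_0^1 \tsG^n(x)^2 \, dx - M_n \to 0$, so it suffices to show that $\int_0^1 \tsG^n(x)^2 \, dx \to \int_0^1 f_*(x)^2 \, dx$ and then add the two limits.

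To pass from pointwise convergence $\tsG^n \to f_*$ (Lemma \ref{lem:claim2}) to convergence of the integrals of squares, I will invoke the uniform bound $\sG^n_i \le U' + L'$ already established in \eqref{eqn:UL-bound}. Since $\tsG^n$ is piecewise linear interpolating these values, the same pointwise bound $\tsG^n(x) \le U' + L'$ holds on $[0,1]$, hence $\tsG^n(x)^2 \le (U'+L')^2$. The constant function $(U'+L')^2$ is integrable on $[0,1]$, so it serves as a dominating function. Applying the dominated convergence theorem to the sequence $\tsG^n(\cdot)^2$, which converges pointwise to $f_*(\cdot)^2$ by continuity of $x \mapsto x^2$, yields
\begin{equation*}
  \int_0^1 \tsG^n(x)^2 \, dx \longrightarrow \int_0^1 f_*(x)^2 \, dx.
\end{equation*}

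Finally, combining this with Lemma \ref{lem:claim1} via
\begin{equation*}
  \left| M_n - \int_0^1 f_*(x)^2 \, dx \right| \le \left| M_n - \int_0^1 \tsG^n(x)^2 \, dx \right| + \left| \int_0^1 \tsG^n(x)^2 \, dx - \int_0^1 f_*(x)^2 \, dx \right|
\end{equation*}
and sending $n \to \infty$ gives the claim. No step here is really an obstacle since Lemmas \ref{lem:claim1} and \ref{lem:claim2} do the hard work; the only care needed is verifying the uniform bound on $\tsG^n$ to legitimately apply dominated convergence, which follows immediately from \eqref{eqn:UL-bound} and the fact that $\tsG^n$ is the piecewise-linear interpolant of the sequence $\sG^n$.
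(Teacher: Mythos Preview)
Your proof is correct and essentially identical to the paper's own argument: the paper also invokes the bound \eqref{eqn:UL-bound} to get $\tsG^n(x)^2 \le (U'+L')^2$, applies the dominated convergence theorem together with the pointwise convergence of Lemma~\ref{lem:claim2} to obtain $\int_0^1 \tsG^n(x)^2\,dx \to \int_0^1 f_*(x)^2\,dx$, and then finishes via Lemma~\ref{lem:claim1}. Your version simply spells out the final triangle-inequality step more explicitly.
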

      \begin{proof}[Proof of Lemma \ref{lem:claim3}]
        From \eqref{eqn:UL-bound} we can bound $\max_x \tsG^n\left(x\right)^2 \le \left(U'+L'\right)^2$. This bound, along with the pointwise convergence of Lemma \ref{lem:claim2}, allows us to apply the dominated convergence theorem, thus giving $\int_0^1 \tsG^n(x)^2dx\rightarrow \int_0^1 f_*(x)^2dx$. Simply applying Lemma \ref{lem:claim1} finishes our proof.
      \end{proof}

      We can now proceed with the proof of the proposition. First we will calculate the integral $\int_0^1 f_*(x)^2dx$, thus establishing that $M(U',L')= \int_0^1 f_*(x)^2dx$. To avoid confusion and to be fully precise: $M$ is defined by the $\triangleq$ symbol in the proposition statement; the left equality in that equation being proven. 
      
      We define $W'$ from $U'$ and $L'$ analogously to the way $W$ is defined from $U$ and $L$ in the proposition statement. Note that $f_*$ is linear on the interval $\left[1-W',1\right]$ and zero elsewhere. From Lemmas \ref{lem:claim1} and \ref{lem:claim2} and using the dominated convergence theorem as before we have that $\int_0^1 f_*(x)dx = U'$. Thus we can write  
      \begin{align*}
        M\left(U',L'\right)
        &=\int_0^1 [f_*(x)]^2dx\\
        &=\int_{1-W'}^1 [f_*(x)]^2dx\\
        &=\int_{1-W'}^1 \left[f_*(x)-\frac{U'}{W'}+\frac{U'}{W'}\right]^2dx\\
        &=\int_{1-W'}^1 \left[f_*(x)-\frac{U'}{W'}\right]^2dx+\int_{1-W'}^1 \left[\frac{U'}{W'}\right]^2dx+2\int_{1-W'}^1 \left[f_*(x)-\frac{U'}{W'}\right]\left[\frac{U'}{W'}\right]dx\\
        &=\int_{1-W'}^1 \left[f_*(x)-\frac{U'}{W'}\right]^2dx+\int_{1-W'}^1 \left[\frac{U'}{W'}\right]^2dx,
      \end{align*}
      where at the last line we have used the fact that $\int_{1-W'}^1f_*(x)-\frac{U'}{W'}dx=0$.

      Clearly $\int_{1-W'}^1 [\frac{U'}{W'}]^2dx=\frac{U'^2}{W'}$. Further, $f_*|_{[1-W',1]}-\frac{U'}{W'}$ is linear with slope $L'$ and $f_*\left(1- W'/2\right)-\frac{U'}{W'} = 0 $, so $f_*|_{[1-W',1]}(x)-\frac{U'}{W'}$ is antisymmetric about $x=1-\frac{W'}{2}$. Using these facts we can continue, 
      \begin{align*}
        \int_0^1 [f_*(x)]^2dx&=\int_{1-W'}^1 [f_*(x)]^2dx\\
        &=\frac{U'^2}{W'}+2\int_{1-\frac{W'}{2}}^1 \left[f_*(x)-\frac{U'}{W'}\right]^2dx\\
        &=\frac{U'^2}{W'} +2\int_{1-\frac{W'}{2}}^1 L'^2\left(x-1+\frac{W'}{2}\right)^2dx\\
        &=\frac{U'^2}{W'} +2L'^2\int_0^{\frac{W'}{2}} x^2dx\\
        &=\frac{U'^2}{W'} +2 L'^2 \frac{(W'/2)^3}{3}= \frac{W'^3L'^2}{12}+\frac{U'^2}{W'},
      \end{align*}
      as expected.
      
      We now fix $U' \leftarrow U$ and $L' \leftarrow L$. We have shown $\int_0^1 f_*(x)dx = U$ which, along direct inspection of the definition of $f_*$, establishes that $f_*$ satisfies the properties of $S$. Let $g\in S$ be arbitrary. We will show that  $\int_{0}^1 g(x)^2dx\leq  \int_0^1 f_*(x)^2dx$, which demonstrates that $f_* \in \arg \max_{f \in S} \int_{0}^1 f^2(x)dx$ and finishes our proof.

      For all $n\ge 2$, define the sequence $g^{n}=(g^{n}_1,\ldots,g^{n}_n)$ with $g^{n}_i=g(\frac{i}{n})$. From the Lipschitz continuity of $g$ we have as $n\rightarrow \infty$,
      \begin{align}
        \label{phantomdissum}
        \sum_{i=1}^n g^n_i\frac{1}{n}\rightarrow \int_0^1 g(x)dx =U,
      \end{align} 
      and 
      \begin{align}
        \label{phantomdissumsq}
        \sum_{i=1}^n \left(g^n_i\right)^2\frac{1}{n}\rightarrow \int_0^1 g(x)^2dx.
      \end{align}

      Let $\varepsilon>0$ be arbitrary. By \eqref{phantomdissum} we can set $N$ such that  $\sum_{i=1}^n g^n_i\frac{1}{n} \leq U+\varepsilon$  for all $n\geq N$. Note that $\left| g^n_{i+i} -g^n_i \right| \le L/n$ and $g^n_i \ge 0$ for all $i$.
      Let $Q_n \triangleq  \frac{1}{n}\left(n\varepsilon + nU - \sum_{i=1}^n g^{n}_i\right)$. For $n\ge N$ we have that $Q_n \ge 0$ and the following hold,
      \begin{alignat*}{2}
          g^{n}_i+Q_n &\ge 0 \quad &\text{ for all $i$}\\
          \left|\left(g^{n}_i+Q_n\right) - \left(g^{n}_{i+1}+Q_n\right)\right| &\le L/n &\text{ for all $i$}\\
          \frac{1}{n}\left(\sum_{i=1}^n g^{n}_i+Q_n\right) &= U+\varepsilon.
      \end{alignat*}
      It follows that
      \begin{alignat*}{3}
        \int_0^1 g(x)^2dx
        &=\lim_{n \to \infty} \sum_{i=1}^n (g^{n}_i)^2\frac{1}{n}\\
        &\le \limsup_{n \to \infty} \sum_{i=1}^n \left(g^{n}_i+Q_n\right)^2\frac{1}{n}\\
        &\le \lim_{n \to \infty} M_n\left(U+\varepsilon,L\right) \\
        &= M\left(U+\varepsilon,L\right). & \text{Lemma \ref{lem:claim3}} 
      \end{alignat*}

      Since $M\left(\cdot,L\right)$ is continuous and $\varepsilon$ was arbitrary we have that $\int_0^1 g(x)^2dx \le M\left(U,L\right)$.
    \end{proof}
    
    \subsubsection{Lower bound for Lipschitz densities in $L^1$.}
    \begin{lem} \label{lem:midapproxL1}
      Let $f:[a,b]\to \rn$ be the function $f(x)= Lx+c$ for some $a,b,c,L\in \mathbb{R}$ with $a<b$. Then  
      \begin{equation}
        \min_{\alpha \in \rn} \int_a^b \left|\alpha - f(x)\right| dx = \frac{L\left(b - a \right)^2 }{4}, \label{eqn:midapproxL1-min}
      \end{equation}
      and
      \begin{equation}
        \arg \min_{\alpha \in \rn} \int_a^b \left|\alpha - f(x)\right| dx = L\frac{a+b}{2} + c = \frac{1}{b-a} \int_a^b f(x)dx.\label{eqn:midapproxL1-arg}
      \end{equation}
    \end{lem}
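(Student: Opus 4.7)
The plan is to verify both claims by a single change of variables that centers the problem and exploits the linear structure of $f$. I will assume throughout that $L \geq 0$ (for $L=0$ the statement is trivial and for $L<0$ one replaces $L$ by $|L|$; in the paper $L$ is always a Lipschitz constant, so this is harmless). The second equality of \eqref{eqn:midapproxL1-arg}, namely $L\tfrac{a+b}{2}+c = \tfrac{1}{b-a}\int_a^b f(x)\,dx$, is an immediate computation from $\int_a^b (Lx+c)\,dx = \tfrac{L}{2}(b^2-a^2)+c(b-a)=(b-a)[L\tfrac{a+b}{2}+c]$, so I will focus on the harder minimization claim.

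First I would observe that $g(\alpha) \triangleq \int_a^b |\alpha - f(x)|\,dx$ is convex in $\alpha$ (as an integral of convex functions), so it suffices to locate a critical point. Writing $\alpha = \alpha_0 + L\tfrac{a+b}{2} + c$ for a perturbation $\alpha_0 \in \mathbb{R}$, the integrand becomes
\begin{equation*}
  \left|\alpha - (Lx+c)\right| = \left|\alpha_0 - L\left(x - \tfrac{a+b}{2}\right)\right|.
\end{equation*}
Substituting $u = L(x - \tfrac{a+b}{2})$ (which is valid as a change of variables when $L>0$; the $L=0$ case is trivial), I obtain
\begin{equation*}
  g(\alpha) = \frac{1}{L}\int_{-L(b-a)/2}^{L(b-a)/2} \left|\alpha_0 - u\right|\,du,
\end{equation*}
which is the integral of $|\alpha_0 - u|$ over an interval symmetric about $0$.

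The next step is to observe that this symmetric integral is uniquely minimized at $\alpha_0 = 0$: indeed, for $|\alpha_0| \leq L(b-a)/2$ the derivative in $\alpha_0$ equals the length of $\{u : u<\alpha_0\}$ minus the length of $\{u : u>\alpha_0\}$, which vanishes exactly when $\alpha_0 = 0$; outside that range the integrand is monotone in $\alpha_0$. This proves the minimizer is $\alpha^* = L\tfrac{a+b}{2} + c$, giving \eqref{eqn:midapproxL1-arg}. Plugging $\alpha_0 = 0$ back in and using symmetry,
\begin{equation*}
  g(\alpha^*) = \frac{1}{L}\int_{-L(b-a)/2}^{L(b-a)/2}|u|\,du = \frac{2}{L}\int_0^{L(b-a)/2} u\,du = \frac{1}{L}\left(\frac{L(b-a)}{2}\right)^2 = \frac{L(b-a)^2}{4},
\end{equation*}
which yields \eqref{eqn:midapproxL1-min}. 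There is no real obstacle here; the only mild subtlety is the $L=0$ degenerate case (both sides vanish) and handling the sign of $L$, which I would dispose of in a single opening sentence.
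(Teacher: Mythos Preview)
Your proof is correct. The paper takes a somewhat different route: rather than invoking convexity and a centering change of variables, it first argues directly that any minimizing $\alpha$ must lie in the range $[\min(f(a),f(b)),\max(f(a),f(b))]$ (values outside can only increase the integral), then parametrizes $\alpha = f(r)$ for some $r\in[a,b]$, splits $\int_a^b|\alpha-f(x)|\,dx$ at $x=r$, and computes explicitly to obtain $\tfrac{L(b-a)^2}{4}+L\delta^2$ with $\delta=r-\tfrac{a+b}{2}$, which is visibly minimized at $\delta=0$. Your approach is a bit more conceptual---convexity reduces the problem to locating a single critical point, and the substitution $u=L(x-\tfrac{a+b}{2})$ makes the symmetry manifest---whereas the paper's is a bare-hands calculation that never differentiates under the integral. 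Both are short and both dispose of the $L=0$ and $L<0$ cases with the same one-line disclaimer.
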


    \begin{proof}[Proof of Lemma \ref{lem:midapproxL1}]
      When $L = 0$ the result follows from just setting $\alpha = c$. We will assume that $L\neq 0$.

      Suppose that $\alpha> \max(f(a),f(b))$, then we have 
      \begin{align*}
        \int_a^b \left|\alpha - f(x)\right| dx 
        & = \int_a^b \alpha - f(x) dx \\
        & = \int_a^b \alpha - \max\left(f(a),f(b)\right) + \max(f(a),f(b)) - f(x) dx \\
        & = \int_a^b \left|\alpha-\max(f(a),f(b))\right|+\left|\max(f(a),f(b)) - f(x)\right| dx\\
        & > \int_a^b\left|\max(f(a),f(b)) - f(x)\right| dx.
      \end{align*}
      Therefore $\alpha > \max(f(a),f(b))$ cannot be the minimizer since we can simply let $\alpha = \max(f(a),f(b))$ and we have a better minimizer. So we have that $\alpha \le \max(f(a),f(b))$ and a similar argument gives us that $\alpha\geq  \min(f(a),f(b))$. Now we have that $\alpha \in \left[\min_{x\in[a,b]}f(x),\max_{x\in[a,b]}f(x)  \right]$. From this and the continuity of $f$ there exists $r \in [a,b]$ such that $f(r)=\alpha$, specifically $\alpha=Lr+c$. We now assume that $L>0$ as the other case ($L < 0$) is analogous.  

      Continuing with this $r$ we get
      \begin{align*}
        \int_a^b \left|\alpha - f(x)\right| dx 
        & = \int_a^b \left|Lr+c - (Lx+c)\right| dx \\
        & = \int_a^b L\left|r - x\right| dx \\
        & = L\int_a^r (r-x)dx + L\int_{r}^b(x-r)dx\\
        & = L(r-a)^2/2+L(b-r)^2/2 \\
        & = L(r-a)^2/2+L(r-b)^2/2 \\
        & = \frac{L(\delta-A)^2}{2}+\frac{L(\delta+A)^2}{2} &\text{letting } \delta:= r - \frac{a+b}{2} \text{ and } A := \frac{a-b}{2}   \\
        & = L\left(A^2 + \delta^2 \right) \\
        & = \frac{L\left(b - a \right)^2 }{4}+L\delta^2.
      \end{align*}
      Upon noting that the last line is minimized for $\delta=0$ we arrive at \eqref{eqn:midapproxL1-min} and the first equality in \eqref{eqn:midapproxL1-arg}. The second equality in \eqref{eqn:midapproxL1-arg} follows from noting
      \begin{align*}
          \int_a^b f(x) dx 
          = \left[\frac{1}{2}Lx^2 +cx \right]_{x=a}^{x=b} 
          = \frac{1}{2}L \left(b^2 - a^2\right) + c (b-a)
          = \left(L \frac{a+b}{2} + c\right)(b-a). 
      \end{align*}
    \end{proof}

    The following lemma addresses the point in the main text ``[w]e show in the appendix that this decays at a rate of $O\left(b^{-1}\right)$ and that \emph{this rate is tight}.''  We use it later to show a lower bound for the rate of convergence of the standard histogram.
 
    \begin{lem} \label{lem:l1lower}
      Let $b\in \nn$ and $f_L:[0,1]\rightarrow \mathbb{R}$ be a collection of pdfs indexed by $L\in [0,\infty)$ via
      \begin{equation*}
        f_L(x) = 
        \begin{cases} 
          1+\left(x-\frac{1}{2}\right)L& L\le 2 \\
          \left(xL-L+\sqrt{2L}\right)_{+}& L \ge 2.
        \end{cases}
      \end{equation*}
      We have that
      \begin{enumerate}[nosep,label=(\alph*)]
        \item If $L\le 2$ then $\min_{g\in \mathcal{H}_{1,b}} \|f_L-g\|_1=\frac{L}{4b}$. \label{item:smallL}
        \item If $L\geq 2$ and $b$ is a multiple of $\sqrt{L/2}$, we have $\min_{g\in \mathcal{H}_{1,b}} \|f_L-g\|_1=\frac{\sqrt{2L}}{4b}$. \label{item:bigL}
      \end{enumerate}
    \end{lem}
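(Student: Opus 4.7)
The plan is to reduce to an independent per-bin minimization via Lemma~\ref{lem:midapproxL1}, exploiting the fact that the unconstrained bin-wise $L^1$ minimizer automatically yields a valid density. Write any $g \in \sH_{1,b}$ as $g = \sum_{i=1}^b w_i h_{1,b,i}$, so that $g$ equals the constant $c_i := bw_i$ on the bin $\Lambda_{1,b,i} = [\tfrac{i-1}{b},\tfrac{i}{b})$, subject to $c_i \ge 0$ and $\tfrac{1}{b}\sum_i c_i = 1$. The error splits as
$$\|f_L - g\|_1 = \sum_{i=1}^b \int_{\Lambda_{1,b,i}} |f_L(x) - c_i|\,dx,$$
and Lemma~\ref{lem:midapproxL1} applies to each bin contained in a single affine piece of $f_L$, giving the unconstrained bin-wise optimizer $c_i^* = b \int_{\Lambda_{1,b,i}} f_L(x)\,dx$. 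The critical observation is that $(c_1^*,\ldots,c_b^*)$ is itself feasible: $c_i^* \ge 0$ since $f_L \ge 0$, and $\tfrac{1}{b}\sum_i c_i^* = \int_0^1 f_L = 1$ since $f_L$ is a density. Hence the unconstrained per-bin minimum equals the constrained minimum over $\sH_{1,b}$, turning the problem into the computation of $\sum_i \min_{c_i\in\rn} \int_{\Lambda_{1,b,i}} |f_L - c_i|\,dx$.

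Part~(a) then follows immediately. When $L \le 2$, $f_L$ is affine with slope $L$ on all of $[0,1]$, so Lemma~\ref{lem:midapproxL1} contributes $L(1/b)^2/4$ on each of the $b$ bins, totalling $L/(4b)$.

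For part~(b), I would use the divisibility hypothesis to align the kink of $f_L$ with a bin boundary. Setting $m := b\sqrt{2/L}$, the assumption that $b$ is a multiple of $\sqrt{L/2}$ forces $m \in \nn$, so the kink at $x = 1 - \sqrt{2/L} = (b-m)/b$ sits exactly at a bin edge and no bin straddles it. The first $b - m$ bins lie in $\{f_L = 0\}$ and contribute $0$ (Lemma~\ref{lem:midapproxL1} applied with slope $0$), while each of the remaining $m$ bins lies in the affine piece of slope $L$ and contributes $L(1/b)^2/4$; summing gives $m \cdot L/(4b^2) = \sqrt{2L}/(4b)$. The only mild obstacle is the decoupling step above: one must verify that the unconstrained bin-wise minima together form a valid density, which is exactly where $\int_0^1 f_L = 1$ is used. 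Without the divisibility hypothesis in (b), one bin would contain the kink of $f_L$ and Lemma~\ref{lem:midapproxL1} would not apply directly to that bin, which is why the extra alignment assumption is required.
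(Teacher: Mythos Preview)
Your proposal is correct and follows essentially the same approach as the paper's proof: decouple into per-bin $L^1$ minimizations via Lemma~\ref{lem:midapproxL1}, then verify that the unconstrained bin-wise minimizers (the bin averages of $f_L$) form a valid density so that the minimum over $\sH_{1,b}$ coincides with the minimum over $\spn(\sH_{1,b})$. The only cosmetic difference is ordering---the paper first optimizes over the span and afterward checks that the optimizer lies in $\Delta_b$, whereas you establish feasibility upfront---but the argument and computations are the same.
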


    \begin{proof}[Proof of Lemma \ref{lem:l1lower}]
      We will begin by optimizing over $\spn\left(\sH_{1,b}\right)$ and will then show that the optimum lies in $\sH_{1,b}$. To begin
      \begin{align}
        \min_{g \in \spn\left(\sH_{1,b}\right)} \|f_L-g\|_1
        &= \min_w \int_0^1 \left|f_L(x) - \sum_{i=1}^b w_i b \1 \left( (i-1)/b\le x< i/b \right) \right| dx \label{eqn:l1lower-arg}\\
        &= \min_{\tw} \sum_{i=1}^b \int_{(i-1)/b}^{i/b}\left|f_L(x) -  \tw_i \right| dx.\notag
      \end{align}
      For case \ref{item:bigL} there is a breakpoint at $x$ satisfying $xL-L+\sqrt{2L}=0 \iff x = 1- \sqrt{\frac{2}{L}}$. Since $b$ is a multiple of $\sqrt{L/2}$, there exists an integer $z$ such that 
      \begin{align}
          &b = z\sqrt{L/2} \notag\\
          \iff & z/b = \sqrt{2/L} \notag \\  
          \iff & (b-z)/b = 1-\sqrt{2/L} \label{eqn:b-multiple}
      \end{align}
      and we have for both cases \ref{item:smallL} and \ref{item:bigL} that $f_L$ is linear on the bins $[(i-1)/b,< i/b)$.
      
      Applying Lemma~\ref{lem:midapproxL1} we have the following for \ref{item:smallL},
      \begin{equation*}
          \min_{\tw} \sum_{i=1}^b \int_{(i-1)/b}^{i/b}\left|f_L(x) -  \tw_i \right| dx 
          = \sum_{i=1}^b \frac{L\left({i/b} - (i-1)/b\right)^2}{4} = \frac{L}{4b}.
      \end{equation*}
      For \ref{item:bigL} we split the summation between the breakpoint
      \begin{align}
          &\min_{\tw} \sum_{i=1}^b \int_{(i-1)/b}^{i/b}\left|f_L(x) -  \tw_i \right| dx\notag \\
          &= \min_{\tw} \sum_{i\in [b]:\frac{i}{b} \le 1- \sqrt{\frac{2}{L}}} \int_{(i-1)/b}^{i/b}\left|f_L(x) -  \tw_i \right| dx +
            \sum_{i\in [b]:\frac{i}{b} > 1- \sqrt{\frac{2}{L}}} \int_{(i-1)/b}^{i/b}\left|f_L(x) -  \tw_i \right| dx\notag \\
          &= \sum_{i\in [b]:\frac{i}{b} \le 1- \sqrt{\frac{2}{L}}} 0 + \sum_{i\in [b]:\frac{i}{b} > 1- \sqrt{\frac{2}{L}}} \frac{L}{4b^2}\notag \\
          &=\left|\left\{i\in [b]:\frac{i}{b} > 1- \sqrt{\frac{2}{L}}\right\}\right| \frac{L}{4b^2}. \label{eqn:case-bigL}
      \end{align}
      From \eqref{eqn:b-multiple} and because $1- \sqrt{2/L}\ge 0$ it follows that $b\left( 1- \sqrt{\frac{2}{L}}\right)$ is a nonnegative integer so
      \begin{align*}
          \left|\left\{i\in [b]:\frac{i}{b} > 1- \sqrt{\frac{2}{L}}\right\}\right|
          &= b- \left|\left\{i\in [b]:\frac{i}{b} \le 1- \sqrt{\frac{2}{L}}\right\}\right|\\
          &= b- \left|\left\{i\in [b]:i \le b\left( 1- \sqrt{\frac{2}{L}}\right)\right\}\right|\\
          &= b- \left( b\left( 1- \sqrt{\frac{2}{L}}\right)\right)\\
          &= b\sqrt{\frac{2}{L}}.
      \end{align*}
      Now \eqref{eqn:case-bigL} is equal to 
      $$
        b\sqrt{\frac{2}{L}} \frac{L}{4b^2} = \frac{\sqrt{2L}}{4b}.
      $$
      
      We will show that the argument for the minimum $w$ (with $bw=\tw$) from \eqref{eqn:l1lower-arg} lies in $\Delta_b$ to finish the proof. From the second equality in \eqref{eqn:midapproxL1-arg} we clearly get that $\tw_i \ge 0$ for all $i$. Again using the second equality in \eqref{eqn:midapproxL1-arg} we have
      \begin{align*}
          \sum_{i=1}^b w_i 
           = \frac{1}{b}\sum_{i=1}^b \tw_i
          &= \frac{1}{b}\sum_{i=1}^b \frac{1}{i/b- (i-1)/b}\int_{(i-1)/b}^{i/b} f_L(x) dx \\
          & =\sum_{i=1}^b \int_{(i-1)/b}^{i/b} f_L(x) dx \\
          & = \int_0^1 f_L(x) dx 
           =1.
      \end{align*}
    \end{proof}

    \subsection{Finite Sample Rate: Multi-view}
    This section contains the finite-sample bounds from Section 2.2.2 in the main text. The results here are a bit stronger than those from the main text at the cost of being a bit less concise. In the main text we weakened the results by assuming that Lipschitz constants were greater than or equal to 2. The next section contains corresponding results for Tucker models.
    \label{sec:finitemulti}
    First we will prove the following finite-sample bound.
    \begin{prop}[Proposition \ref{prop:finiteant} in main text]
      \label{prop:finiteantappx}
      Let $d,b,k,n\in \nn$ and $0<\delta\leq 1$. There exists an estimator $V_n\in \sH_{d,b}^k$ such that for all densities $p \in \sD_d$ the following holds with probability at least $1-\delta$ 
      \begin{align}
        \label{eqn-appx:finitebound}
        \|p-V_n\|_1\leq \min_{q\in \sH_{d,b}^k} 3\|p-q\|_1+ 7\sqrt{\frac{2bdk\log(4bdkn)}{n}}+7\sqrt{\frac{\log(\frac{3}{\delta})}{2n}}
      \end{align}
      where $V_n$ is a function of $X_1,\ldots,X_n \simiid p$.
    \end{prop}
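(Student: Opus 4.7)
The plan is to combine the covering-number bound for $\sH_{d,b}^k$ (Corollary \ref{cor:lrcover}) with the finite-collection estimator of Lemma \ref{lem:densalg}, optimizing the cover radius so the resulting bound matches the stated form. Concretely, I would first invoke Corollary \ref{cor:lrcover} with radius $\varepsilon = 1/n$ to obtain a finite $(1/n)$-cover $\sP_n \subseteq \sH_{d,b}^k$ of cardinality $M \le (4bdn)^{bdk}(4kn)^{k}$. The key observation is that any reference density $q \in \sH_{d,b}^k$ can be replaced by its nearest element in $\sP_n$ at the cost of an additive $1/n$ in $L^1$, so for any target $p$,
\begin{equation*}
\min_{q \in \sP_n} \|q - p\|_1 \;\le\; \min_{q \in \sH_{d,b}^k} \|q - p\|_1 + \frac{1}{n}.
\end{equation*}

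Next I would apply Lemma \ref{lem:densalg} to the finite family $\sP_n$, inverting the sample-complexity condition $n \ge \log(3M^2/\delta)/(2\varepsilon'^2)$ to set $\varepsilon' = \sqrt{\log(3M^2/\delta)/(2n)}$. The resulting estimator $V_n \in \sP_n \subseteq \sH_{d,b}^k$ satisfies, with probability at least $1 - \delta/3 \ge 1 - \delta$,
\begin{equation*}
\|V_n - p\|_1 \;\le\; 3 \min_{q \in \sP_n} \|q - p\|_1 + 4\varepsilon'
\;\le\; 3 \min_{q \in \sH_{d,b}^k} \|q - p\|_1 + \tfrac{3}{n} + 4\varepsilon'.
\end{equation*}

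All that then remains is arithmetic massage to cast $\varepsilon'$ in the stated form. Using $\log(4bdn) \le \log(4bdkn)$ and $\log(4kn) \le \log(4bdkn)$ together with $bdk + k \le 2bdk$ gives $\log M \le 2bdk \log(4bdkn)$, whence $\log(3M^2/\delta) \le 4bdk\log(4bdkn) + \log(3/\delta)$. Applying $\sqrt{a+b} \le \sqrt{a} + \sqrt{b}$ then yields
\begin{equation*}
4\varepsilon' \;\le\; 4\sqrt{\frac{2bdk\log(4bdkn)}{n}} + 4\sqrt{\frac{\log(3/\delta)}{2n}}.
\end{equation*}
Finally, the stray $3/n$ is absorbed into the second square-root term (using $3/n \le 3\sqrt{\log(3/\delta)/(2n)}$ for the relevant range of $\delta$), producing the advertised coefficient $7$ on both terms.

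The only genuinely delicate step is the constant bookkeeping—ensuring that the looseness in $\log M \le 2bdk\log(4bdkn)$ and in the sub-additivity of $\sqrt{\,\cdot\,}$ conspires to land exactly on the constants $7$ and $\sqrt{2bdk\log(4bdkn)/n}$ stated in the proposition, and that the additive $3/n$ correction from the cover radius is safely swallowed. Choosing $\varepsilon = 1/n$ is the right compromise: small enough that $3\varepsilon$ does not appear in the leading order, but large enough that $\log(1/\varepsilon) = \log n$ stays inside the $\log(4bdkn)$ factor without inflating it.
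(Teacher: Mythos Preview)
Your approach is correct and in fact slightly more direct than the paper's. The paper couples the cover radius and the Lemma~\ref{lem:densalg} parameter into a single $\varepsilon$, producing a combined $7\varepsilon$ slack, and then invokes the mean value theorem to choose $\varepsilon \in (0,1]$ satisfying $\rho(\varepsilon) = n$ for $\rho(\varepsilon) := 2bdk\log(4bdk/\varepsilon)/\varepsilon^{2} + \log(3/\delta)/(2\varepsilon^{2})$; only afterwards does it deduce $\varepsilon \ge 1/n$. By decoupling the two---fixing the cover radius at $1/n$ and choosing the lemma parameter $\varepsilon'$ separately---you bypass the implicit-function step entirely, which is cleaner.

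One small correction: your final absorption $3/n \le 3\sqrt{\log(3/\delta)/(2n)}$ fails at $n=1$ with $\delta$ near $1$ (for $\delta=1$ the right side is $3\sqrt{(\log 3)/2} \approx 2.22 < 3$). The fix is immediate---absorb into the \emph{first} square-root term instead. Since $b,d,k,n \ge 1$ we have $2bdk\log(4bdkn) \ge 2\log 4 > 1 \ge 1/n$, so $1/n \le \sqrt{2bdk\log(4bdkn)/n}$ and hence
\[
\frac{3}{n} + 4\sqrt{\frac{2bdk\log(4bdkn)}{n}} + 4\sqrt{\frac{\log(3/\delta)}{2n}} \;\le\; 7\sqrt{\frac{2bdk\log(4bdkn)}{n}} + 7\sqrt{\frac{\log(3/\delta)}{2n}}.
\]
(The paper avoids this corner by disposing of $n=1$ separately at the outset, observing that the bound then exceeds $7$ while $\|p-V_n\|_1 \le 2$ always.)
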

    
    \begin{proof}[Proof of Proposition~\ref{prop:finiteantappx}]
      We begin by showing that \eqref{eqn-appx:finitebound}, and therefore the proposition, holds if $n=1$, and $d,b,k\ge 1$. Bounding the second term in the next summation with $b,d,k\ge 1$ and $n=1$ yields the following inequality
      \begin{align*}
        3\min_{q\in\sH_{d,b}^k}\|p-q\|_1+ 7\sqrt{\frac{2bdk\log(4bdkn)}{n}}+7\sqrt{\frac{\log(\frac{3}{\delta})}{2n}}
        &\geq 7\sqrt{\frac{2bdk\log(4bdkn)}{n}}\\
        &\geq 7\sqrt{2\log(4)}\\
        &\geq 7.
      \end{align*}
     The triangle inequality gives us $\|p-V_n\|_1\leq 2\leq 7$. Therefore Proposition \ref{prop:total} holds for $n=1$. We will now proceed assuming that $n\ge2.$

      Let $1\ge \varepsilon>0$ be arbitrary. By Corollary~\ref{cor:lrcover} there exists $p_1,\ldots,p_M \in \sH_{d,b}^k$ such that $M\le\left(\frac{4bd}{\varepsilon} \right)^{bdk} \left( \frac{4k}{\varepsilon}\right)^k$ and for all $q\in \sH^k_{d,b}$ there exists $i\leq M$ with  $\|p_i-q\|_1\leq \varepsilon$.
      Applying Lemma~\ref{lem:densalg} with the same $\varepsilon$ gives a deterministic algorithm $V_n$ that, given at least $\frac{\log(3M^2/\delta)}{2\varepsilon^2}$ samples from a density $p$, outputs an index $j\in [M]$ where, with probability at least $1-\delta$, the following holds 
      \begin{align*} 
        \left\|p_j - p \right\|_1 \notag
        \le& 3 \min_{i \in \left[M\right]} \left\|p_i - p\right\|_1 + 4 \varepsilon \notag \\
        \le& 3\min_{q\in \sH_{d,b}^k}  \min_{i \in \left[M\right]} \left(\left\|p_i - q\right\|_1 + \left\|q - p\right\|_1\right) + 4 \varepsilon \notag\\
        =& \min_{q\in \sH_{d,b}^k} 3 \left( \min_{i \in \left[M\right]}\left\|p_i - q\right\|_1\right) + 3\left\|q - p\right\|_1 + 4 \varepsilon \\
        \le& 7\varepsilon+3\min_{q\in \sH_{d,b}^k}\|p-q\|_1\notag
      \end{align*}
       (we are loosing $\delta/3$ from Lemma~\ref{lem:densalg} to $\delta$ for convenience). 

      Note that
      \begin{equation}
        \frac{\log(3M^2/\delta)}{2\varepsilon^2}
        =\frac{\log(M)}{\varepsilon^2} + \frac{\log\left(3/\delta\right)}{2\varepsilon^2}. \label{eqn:Mexpression}
      \end{equation}
      We will now bound $\log(M)$ which, because $\varepsilon$ is positive and $\log$ is strictly increasing, will give us an upper bound on the previous term. The following follows from the fact that $b,d$ and $k$ are all greater than or equal to 1 and $1\ge\varepsilon>0$,
      \begin{align*}
        M
        &\le\left(\frac{4bd}{\varepsilon} \right)^{bdk} \left( \frac{4k}{\varepsilon}\right)^k\\
        &\le\left(\frac{4bdk}{\varepsilon} \right)^{bdk} \left( \frac{4bdk}{\varepsilon}\right)^{bdk}\\
        &= \left( \frac{4bdk}{\varepsilon}\right)^{2bdk}.
      \end{align*}
      Applying this to \eqref{eqn:Mexpression} we have 
      \begin{equation}\label{eqn:mv-sample-eps-select}
        \frac{\log(3M^2/\delta)}{2\varepsilon^2}
        \leq \frac{2bdk\log( \frac{4bdk}{\varepsilon})}{\varepsilon^2}+\frac{\log(\frac{3}{\delta})}{2\varepsilon^2}.
      \end{equation}
      The rest of the proof will be primarily concerned with choosing $\varepsilon\in \left(0,1\right]$ so that the RHS of \eqref{eqn:mv-sample-eps-select} is less than or equal to $n$ so the hypotheses of Lemma~\ref{lem:densalg} are satisfied. We begin by eliminating some settings where selecting $V_n$ trivial; we will then apply Lemma~\ref{lem:densalg} for the remaining settings.
      
      Observe that if $n< 4bdk\log(4bdkn)$, then 
      $$
      7\sqrt{\frac{2bdk\log(4bdkn)}{n}}>2,$$
      and inequality~\eqref{eqn-appx:finitebound} holds trivially.
      Similarly, if $n < \log\left(\frac{3}{\delta}\right)$, then 
      $$7\sqrt{\frac{\log(\frac{3}{\delta})}{2n}}\geq  \frac{7}{\sqrt{2}}>2,$$
      and again inequality~\eqref{eqn-appx:finitebound} holds trivially. 

      Thus, we can proceed with the setting 
      \begin{align}
        \label{eqn:saved}
        n&\geq 2\max\left(2bdk\log(4bdkn),\frac{\log(\frac{3}{\delta})}{2}\right)\notag\\
        &\geq 2bdk\log(4bdkn)+\frac{\log(\frac{3}{\delta})}{2}.
      \end{align}

      Defining the function $\rho(\varepsilon):=  \frac{2bdk\log( \frac{4bdk}{\varepsilon})}{\varepsilon^2}+\frac{\log(\frac{3}{\delta})}{2\varepsilon^2}$, inequality~\eqref{eqn:saved} implies that 
      \begin{align}
        \label{eqn:mvtright}
        \rho(1)=\frac{2bdk\log( \frac{4bdk}{1})}{1}+\frac{\log(\frac{3}{\delta})}{2}\leq n.
      \end{align}
      Furthermore, 
      \begin{align}
        \label{eqn:mvtleft}\lim_{x\rightarrow 0}\rho(x)=\infty.
      \end{align}

      Together with the mean value theorem, \eqref{eqn:mvtright} and~\eqref{eqn:mvtleft} imply that we can now pick $1\geq \varepsilon> 0$ such that \begin{align}
        \label{balance}
        \rho(\varepsilon)=\frac{2bdk\log( \frac{4bdk}{\varepsilon})}{\varepsilon^2}+\frac{\log(\frac{3}{\delta})}{2\varepsilon^2}=n.
      \end{align} 
      We can now apply the estimator from Lemma~\ref{lem:densalg} to select the estimator $V_n$. As we have shown before the estimator in Lemma~\ref{lem:densalg} outputs a density in $\sH_{d,b}^k$ such that 
      \begin{equation}
        \label{keyy}
        \|V_n-p\|_1\leq 7\varepsilon+3\min_{q\in \sH_{d,b}^k}\|p-q\|_1.
      \end{equation}
      By \eqref{balance}, we have 
      \begin{equation*}
        \varepsilon=\sqrt{\frac{2bdk\log( \frac{4bdk}{\varepsilon})}{n}+\frac{\log(\frac{3}{\delta})}{2n}}\geq \sqrt{\frac{1}{2n}}\geq \frac{1}{n},
      \end{equation*}
      since $0<\delta, \varepsilon \leq 1$ and $n\geq 2$. Using this in \eqref{balance}, we obtain
      \begin{align*}
        \varepsilon
        &=\sqrt{\frac{2bdk\log( \frac{4bdk}{\varepsilon})}{n}+\frac{\log(\frac{3}{\delta})}{2n}}\\
        &\le\sqrt{\frac{2bdk\log( 4bdkn)}{n}+\frac{\log(\frac{3}{\delta})}{2n}}\\
        &\leq \sqrt{\frac{2bdk\log(4bdkn)}{n}}+\sqrt{\frac{\log(\frac{3}{\delta})}{2n}}.
      \end{align*}
      The result follows upon plugging this back into inequality~\eqref{keyy}.
    \end{proof}
    Now we can prove the key result from the paper.
    \begin{prop}[Theorem \ref{thm:class} in main text]
      \label{prop:total}
      Let $L\geq 2$, $0<\delta\le 1$ and $k,n \in \nn$. Then there exists $b$ and an estimator $V_n \in \sH_{d,b}^k$ such that for any density $p \triangleq \sum_{i=1}^k w_i \prod_{j=1}^d p_{i,j}$ where $p_{i,j} \in \lip_L$ and $w$ is in the probability simplex, the following holds with probability at least $1-\delta$,
      \begin{align}\label{eqn:1.1}
        \|V_n-p\|_1\leq \frac{21dk^{1/3}L^{\frac{d+3}{12}}}{n^{\frac{1}{3}}}\sqrt{\log(3Ldkn)}+7\sqrt{\frac{\log(\frac{3}{\delta})}{2n}}
      \end{align}
      where $V_n$ is a function of $X_1,\ldots,X_n \simiid p$.
      
      This also holds with ``$L\leq 2$'' replacing ``$L\ge 2$'' and the following inequality replacing \eqref{eqn:1.1}
      \begin{align}\label{eqn:1.2}
        \|V_n-p\|_1\leq  \sqrt{d}\frac{k^{1/3}}{n^{1/3}}\left[L^{1/3}\exp\left(\frac{L^2(d-1)}{24}\right)+20\sqrt{\log(7dnk)}\right]+7\sqrt{\frac{\log(\frac{3}{\delta})}{2n}}.
      \end{align}
    \end{prop}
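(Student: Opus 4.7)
The plan is to combine the bias analysis for separable Lipschitz densities (Proposition \ref{prop:l2projbnd}) with the distribution-dependent finite-sample bound (Proposition \ref{prop:finiteant}), and then optimize the bin count $b$ to balance bias against estimation error.

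First I would control $\min_{q \in \sH_{d,b}^k}\|p-q\|_1$ for a density of the form $p = \sum_{i=1}^k w_i \prod_{j=1}^d p_{i,j}$ with $p_{i,j} \in \lip_L$. For each mixture component, set $q_i \triangleq \proj_{\sH_{d,b}^1}\prod_{j=1}^d p_{i,j}$ and define the candidate $q \triangleq \sum_{i=1}^k w_i q_i \in \sH_{d,b}^k$. By convexity of the $L^1$ norm and H\"older's inequality on the unit cube (using $\|f\|_1 \le \|f\|_2$), we get
\begin{equation*}
\|p - q\|_1 \le \sum_{i=1}^k w_i \left\|\prod_{j=1}^d p_{i,j} - q_i\right\|_2 \le \max_i \left\|\prod_{j=1}^d p_{i,j} - q_i\right\|_2.
\end{equation*}
Applying Proposition \ref{prop:l2projbnd} then yields, for $L \ge 2$,
\begin{equation*}
\min_{q \in \sH_{d,b}^k}\|p-q\|_1 \le \sqrt{\frac{d L^{(d+3)/2}}{12 b^2}\left(\frac{\sqrt{8}}{3}\right)^{d-1}} \le \frac{\sqrt{d}\, L^{(d+3)/4}}{b\sqrt{12}}\left(\frac{\sqrt{8}}{3}\right)^{(d-1)/2}.
\end{equation*}
The analogous $L \le 2$ bound from Proposition \ref{prop:l2projbnd} handles the second case in the proposition.

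Next I would feed this into Proposition \ref{prop:finiteant} to obtain, with probability at least $1-\delta$,
\begin{equation*}
\|V_n - p\|_1 \le \underbrace{3 \cdot \frac{\sqrt{d}\, L^{(d+3)/4}}{b\sqrt{12}}\left(\tfrac{\sqrt{8}}{3}\right)^{(d-1)/2}}_{\text{bias}} + \underbrace{7\sqrt{\frac{2bdk \log(4bdkn)}{n}}}_{\text{variance}} + 7\sqrt{\frac{\log(3/\delta)}{2n}}
\end{equation*}
for any valid choice of $b$. Now I would choose $b$ to balance the bias and variance terms. Setting the bias of order $L^{(d+3)/4}\sqrt{d}/b$ equal to the variance of order $\sqrt{bdk/n}$ gives $b \asymp \left(L^{(d+3)/2} n / k\right)^{1/3}$, i.e. $b$ on the order of $L^{(d+3)/6} n^{1/3} k^{-1/3}$. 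With this choice, both the bias and variance become of order $L^{(d+3)/12} k^{1/3} n^{-1/3}$ multiplied by $\sqrt{d}$ (and a logarithmic factor absorbed into the variance term). Collecting constants carefully and bounding $(\sqrt{8}/3)^{(d-1)/2}$ by a $d$-independent constant would then produce the clean form
\begin{equation*}
\|V_n - p\|_1 \le \frac{21 d k^{1/3} L^{(d+3)/12}}{n^{1/3}}\sqrt{\log(3Ldkn)} + 7\sqrt{\frac{\log(3/\delta)}{2n}}.
\end{equation*}
For the $L \le 2$ case I would repeat the balancing with the sharper bound $d L^2/(12 b^2) \exp((d-1)L^2/12)$ in place of the $L \ge 2$ bias.

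The main obstacle will be the bookkeeping in the final balancing step: I need the constants, the factors of $\sqrt{d}$, the $L$-exponents, and the logarithmic terms to all collapse into the advertised form while $b$ is only defined implicitly through balancing. A minor subtlety is that $b$ must be an integer, which I would handle by rounding up the balanced value and absorbing the $O(1)$ loss into the leading constant $21$; another is ensuring the hypothesis $b^2 \ge L^2/12$ of Proposition \ref{prop:l2projbnd} is automatically satisfied by the chosen $b$, which I would handle by noting that the bound is trivial (using $\|p-V_n\|_1 \le 2$) whenever the optimal $b$ is too small. Everything else is routine plugging and simplifying.
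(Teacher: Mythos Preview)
Your proposal is correct and follows essentially the same approach as the paper's proof: bound the bias via Proposition~\ref{prop:l2projbnd} extended to mixtures by convexity and H\"older, plug into Proposition~\ref{prop:finiteant}, set $b = \lceil n^{1/3} L^{(d+3)/6} k^{-1/3}\rceil$, and dispose of the edge cases (integer rounding, the hypothesis $b^2 \ge L^2/12$, and the trivial regime where the bound exceeds~$2$) exactly as you describe. The paper simplifies $(\sqrt{8}/3)^{(d-1)/2} \le 1$ at the outset to get the cleaner bias bound $\sqrt{d}\,L^{(d+3)/4}/(3b)$, but otherwise the bookkeeping you anticipate is precisely what remains.
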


    \begin{proof}[Proof of Proposition~\ref{prop:total}]
      We begin with the $L\ge 2$ case and the other case will follow with some minor adjustments. From H\"older's Inequality followed by Proposition \ref{prop:l2projbnd}, if $b^2\geq L^2/12$ and $L\geq 2$, then for any collection $f_1,\ldots,f_d$ in $\lip_L$ we have that
      \begin{equation*}
        \left \| \prod_{j=1}^d f_j - \proj_{\sH_{d,b}^1}\prod_{j=1}^d f_j \right\|_1^2
        \le \left \| \prod_{j=1}^d f_j - \proj_{\sH_{d,b}^1}\prod_{j=1}^d f_j \right\|_2^2 
        \le\frac{dL^{\frac{d+3}{2}}}{12b^2}\left[\frac{\sqrt{8}}{3}\right]^{d-1}
        \le \frac{dL^{\frac{d+3}{2}}}{9b^2}.
      \end{equation*}
      Taking the square root, we have 
      \begin{equation*}
        \left \| \prod_{j=1}^d f_j - \proj_{\sH_{d,b}^1}\prod_{j=1}^d f_j \right\|_1 
        \leq \frac{\sqrt{d}L^{\frac{d+3}{4}}}{3b} .
      \end{equation*}

      Consider some density $p$ like that from the theorem statement  
      $$
      p=\sum_{i=1}^k w_i\prod_{j=1}^d p_{i,j}.
      $$

      Since  $\sum_{i=1}^k w_i\prod_{j=1}^d \proj_{\sH_{1,b}}p_{i,j}$ is an element of $\sH_{d,b}^k$, using Corollary \ref{cor:rankproj} gives us 
      \begin{align*}
        \min_{q\in \sH_{d,b}^k} \left \| p - q \right\|_1
        & \le\left\|\sum_{i=1}^k w_i\prod_{j=1}^d p_{i,j}-\sum_{i=1}^k w_i\prod_{j=1}^d \proj_{\sH_{1,b}}p_{i,j}\right\|_1\notag \\
        & =\left\|\sum_{i=1}^k w_i\prod_{j=1}^d p_{i,j}-\sum_{i=1}^k w_i\proj_{\sH_{d,b}^1}\prod_{j=1}^d p_{i,j}\right\|_1\notag\\
        &\leq \sum_{i=1}^k w_i \left\|\prod_{j=1}^d p_{i,j}- \proj_{\sH_{d,b}^1} \prod_{j=1}^d  p_{i,j}\right\|_1\notag \\
        &\le \sum_{i=1}^k w_i \frac{\sqrt{d}L^{\frac{d+3}{4}}}{3b} =\frac{\sqrt{d}L^{\frac{d+3}{4}}}{3b}.
      \end{align*}

      Invoking the estimator $V_n$ from Proposition~\ref{prop:finiteantappx} and using the previous bound  it follows that, for any choice of $k$ and $b$ such that $b^2\geq L^2/12$, there exists an estimator $V_n\in \sH_{d,b}^k$ where for all densities $p$ from our theorem statement, with probability at least $1-\delta$, the following holds
      \begin{align}
        \label{eqn:mv-estimator-middle}
        \|p-V_n\|_1\leq \frac{\sqrt{d}L^{\frac{d+3}{4}}}{b}+ 7\sqrt{\frac{2bdk\log(4bdkn)}{n}}+7\sqrt{\frac{\log(\frac{3}{\delta})}{2n}}.
      \end{align}

      We now set $b=\left \lceil n^{\frac{1}{3}}L^{\frac{d+3}{6}}k^{-1/3}\right \rceil$.  Note that if $n<k L$ then \eqref{eqn:1.1} is trivially satisfied since $L\ge 2$ and
      \begin{equation*}
         \frac{21dk^{1/3}L^{\frac{d+3}{12}}}{n^{\frac{1}{3}}}\geq \frac{21dk^{1/3}L^{\frac{1+3}{12}}}{n^{\frac{1}{3}}}\geq \frac{21dk^{1/3}L^{\frac{1}{3}}}{n^{\frac{1}{3}}}\geq 21.
      \end{equation*}
      Thus we can proceed with the assumption that $n\geq k L$ for the $L\ge 2$ case of this proof. Under this assumption, noting again that $L\ge 2$, we have the following bound on $b$,
      \begin{equation*}
          b\geq n^{\frac{1}{3}}L^{\frac{d+3}{6}}k^{-1/3}\geq L^{1/3}L^{\frac{1+3}{6}} =  L,
      \end{equation*} 
      which implies $b^2\geq \frac{L^2}{12}$. So for the remainder of the $L\ge 2$ case of this proof we will proceed with the assumption $b^2\geq \frac{L^2}{12}$ and use the estimator from \eqref{eqn:mv-estimator-middle}.
      
      Since $n\geq kL \ge k$ and $L\ge 2$, we also have $1\le n^{\frac{1}{3}}L^{\frac{d+3}{6}}k^{-1/3}\leq b\leq 2n^{\frac{1}{3}}L^{\frac{d+3}{6}}k^{-1/3}$. Thus
      \begin{align*}
        7\sqrt{\frac{2bdk\log\left(4bdkn\right)}{n}}
        &\leq 7\sqrt{\frac{4dk^{2/3}n^{\frac{1}{3}}L^{\frac{d+3}{6}}\log\left(8L^{\frac{d+3}{6}}dk^{2/3}n^{4/3}\right)}{n}} \\       
        &= 14\frac{\sqrt{d}k^{1/3}L^{\frac{d+3}{12}}}{n^{1/3}}\sqrt{\log\left(8L^{\frac{d+3}{6}}dk^{2/3}n^{4/3}\right)} \\
        &\le 14\frac{\sqrt{d}k^{1/3}L^{\frac{d+3}{12}}}{n^{1/3}}\sqrt{\log\left(3^{2d}L^{2d}d^{2d}k^{2d}n^{2d}\right)} \\
        &= 14\frac{\sqrt{d}k^{1/3}L^{\frac{d+3}{12}}}{n^{1/3}} \sqrt{2d\log\left(3Ldkn\right)} \\
        & \leq 20\frac{dk^{1/3}L^{\frac{d+3}{12}}}{n^{1/3}}\sqrt{\log\left(3Ldkn\right)}.
      \end{align*}
      Using this with \eqref{eqn:mv-estimator-middle} and applying $n^{\frac{1}{3}}L^{\frac{d+3}{6}}k^{-1/3}\leq b$ to the first summand in \eqref{eqn:mv-estimator-middle} we have
      \begin{align*}
        \|p-V_n\|_1
        &\leq \frac{\sqrt{d}k^{1/3}L^{\frac{d+3}{12}}}{n^{\frac{1}{3}}}+\frac{20dk^{1/3}L^{\frac{d+3}{12}}}{n^{\frac{1}{3}}}\sqrt{\log(3Ldkn)}+7\sqrt{\frac{\log(\frac{3}{\delta})}{2n}}\\
        &\leq \frac{21dk^{1/3}L^{\frac{d+3}{12}}}{n^{\frac{1}{3}}}\sqrt{\log(3Ldkn)}+7\sqrt{\frac{\log(\frac{3}{\delta})}{2n}},
      \end{align*}
      as expected. This finishes the $L\ge 2$ case.

      For the $L\leq 2$ case, we obtain (using Proposition~\ref{prop:l2projbnd} and taking the square root along with some simple manipulations) instead,
      \begin{equation*}
        \min_{q \in \sH_{d,b}^k} \left \| p - q \right\|_1 \leq \sqrt{d}\frac{L}{3b}\exp\left(\frac{L^2(d-1)}{24}\right),
      \end{equation*}
      and instead of \eqref{eqn:mv-estimator-middle}, 
      \begin{equation}
        \|p-V_n\|_1\leq \sqrt{d}\frac{L}{b}\exp\left(\frac{L^2(d-1)}{24}\right)+ 7\sqrt{\frac{2bdk\log(4bdkn)}{n}}+7\sqrt{\frac{\log(\frac{3}{\delta})}{2n}}.\label{eqn:mv-smallL}
      \end{equation}
         Since $L\leq 2$, we clearly have $\frac{L^2}{12} < b^2$  so we can use Proposition~\ref{prop:l2projbnd} without issue. Set $b=\left \lceil n^{1/3}k^{-1/3}L^{2/3}\right \rceil$ (we can assume $L>0$ and thus $b\ge 1$ since the $L=0$ case can be solved by simply setting $V_n$ to output the uniform distribution). If $k> n$ then \eqref{eqn:1.2} is trivially satisfied so we can proceed with the assumption that $k\le n$. 
      
        Since $n\geq k$ and $n^{1/3}k^{-1/3} \ge 1$ it follows that $b\geq n^{1/3}k^{-1/3}L^{2/3}$ and $b\leq \left\lceil n^{1/3}k^{-1/3}2^{2/3}\right\rceil \leq \left\lceil n^{1/3}k^{-1/3}2\right\rceil \leq 3n^{1/3}k^{-1/3}$. Letting $C = 7\sqrt{\frac{\log(\frac{3}{\delta})}{2n}}$ we can bound \eqref{eqn:mv-smallL} as follows
      \begin{align*}
        \|p-V_n\|_1
        &\leq \sqrt{d}\frac{L}{b}\exp\left(\frac{L^2\left(d-1\right)}{24}\right) + 7\sqrt{\frac{2bdk\log\left(4bdkn\right)}{n}} + C\\
        &\leq \sqrt{d}\frac{L}{b}\exp\left(\frac{L^2\left(d-1\right)}{24}\right) + 7 \sqrt{\frac{2\left(3n^{1/3}k^{-1/3}\right)k}{n}}  \sqrt{d\log\left(4\left(3n^{1/3}k^{-1/3}\right)dkn\right)} + C\\
        &\leq \sqrt{d}\frac{L^{1/3}k^{1/3}}{n^{1/3}}\exp\left(\frac{L^2\left(d-1\right)}{24}\right)+\frac{7k^{1/3}}{n^{1/3}}\sqrt{6d\log\left(12dn^{4/3}k^{2/3}\right)} + C \\
        &\leq \sqrt{d}\frac{L^{1/3}k^{1/3}}{n^{1/3}}\exp\left(\frac{L^2\left(d-1\right)}{24}\right)+\frac{7k^{1/3}}{n^{1/3}}\sqrt{6d\log\left(7^{4/3}d^{4/3}n^{4/3}k^{4/3}\right)} + C \\
        &\leq \sqrt{d}\frac{L^{1/3}k^{1/3}}{n^{1/3}}\exp\left(\frac{L^2\left(d-1\right)}{24}\right)+\frac{7k^{1/3}}{n^{1/3}}\sqrt{8d\log\left(7dnk\right)}+ C \\
        &\leq \sqrt{d}\frac{L^{1/3}k^{1/3}}{n^{1/3}}\exp\left(\frac{L^2\left(d-1\right)}{24}\right)+\frac{20k^{1/3}\sqrt{d}}{n^{1/3}}\sqrt{\log\left(7dnk\right)} +C\\
        &=\sqrt{d}\frac{k^{1/3}}{n^{1/3}}\left[L^{1/3}\exp\left(\frac{L^2(d-1)}{24}\right)+20\sqrt{\log(7dnk)}\right]+7\sqrt{\frac{\log(\frac{3}{\delta})}{2n}},
      \end{align*}
      which finishes the proof for the $L\le 2$ case.
    \end{proof}

    \subsection{Finite Sample Rate: Tucker} \label{rateantoinetucker}
    Here we present results for the Tucker decomposition that are analogous to those in the last section. The results here were omitted from the main text ``[f]or brevity, and because the results are virtually direct analogues of their multi-view histogram counterparts...'' We begin again with a proof of a finite-sample bound, which will then be used to prove a distribution-free bound.
    \begin{prop}
      \label{prop:finiteantucker}
      Let $d,b,k,n\in \nn$ and $0<\delta\leq 1$. There exists an estimator $V_n\in\tsH_{d,b}^k$ such that for all densities $p\in \sD_d$, the following holds with probability at least $1-\delta$  
      \begin{align}
        \label{eqn:tuckerfiniteb}
        \|p-V_n\|_1\leq 3 \min_{q\in \tsH_{d,b}^k} \|p-q\|_1+ 7\sqrt{\frac{2bdk\log( 4bdn)}{n}}+7\sqrt{\frac{2k^d\log( 4k^dn)}{n}}+7\sqrt{\frac{\log(\frac{3}{\delta})}{2n}}
      \end{align}
      where $V_n$ is a function of $X_1,\ldots,X_n \simiid p$.
    \end{prop}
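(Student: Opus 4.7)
The plan is to mirror the strategy used in the proof of Proposition \ref{prop:finiteantappx}, adapting the calculations to account for the fact that the covering number bound from Corollary \ref{cor:tuckcover} decomposes as a product of two distinct factors, $\left(\frac{4bd}{\varepsilon}\right)^{bdk}$ and $\left(\frac{4k^d}{\varepsilon}\right)^{k^d}$. First, I would dispose of trivial cases. If the middle terms $7\sqrt{2bdk\log(4bdn)/n}$, $7\sqrt{2k^d\log(4k^dn)/n}$, or $7\sqrt{\log(3/\delta)/(2n)}$ already exceed $2$, then inequality \eqref{eqn:tuckerfiniteb} holds by the triangle inequality $\|p - V_n\|_1 \le 2$ for any $V_n$, so we may assume each of these terms is small, which in particular gives $n \ge 2 \cdot 2 b d k \log(4bdn)$, $n \ge 2 \cdot 2 k^d \log(4 k^d n)$, and $n \ge \log(3/\delta)$.

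Next, given any $0 < \varepsilon \le 1$, Corollary \ref{cor:tuckcover} supplies a finite collection $p_1,\ldots,p_M \in \tsH_{d,b}^k$ that $\varepsilon$-covers $\tsH_{d,b}^k$ with $M \le (4bd/\varepsilon)^{bdk}(4k^d/\varepsilon)^{k^d}$. Feeding this collection into Lemma \ref{lem:densalg}, provided $n \ge \log(3M^2/\delta)/(2\varepsilon^2)$, we obtain an estimator $V_n \in \tsH_{d,b}^k$ such that with probability at least $1-\delta$,
\begin{equation*}
\|V_n - p\|_1 \le 3\min_{i\in[M]} \|p_i - p\|_1 + 4\varepsilon \le 3\min_{q\in\tsH_{d,b}^k}\|p - q\|_1 + 7\varepsilon,
\end{equation*}
by the triangle inequality applied via the $\varepsilon$-covering property, exactly as in the proof of Proposition \ref{prop:finiteantappx}.

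The main step is now choosing $\varepsilon$. Taking logarithms we have
\begin{equation*}
\frac{\log(3M^2/\delta)}{2\varepsilon^2} \le \frac{bdk\log(4bd/\varepsilon)}{\varepsilon^2} + \frac{k^d \log(4k^d/\varepsilon)}{\varepsilon^2} + \frac{\log(3/\delta)}{2\varepsilon^2}.
\end{equation*}
Here the key structural difference from the multi-view case arises: instead of combining the two $\log$ factors into a single $(4bdk/\varepsilon)^{2bdk}$ term, we must keep the $bdk$ and $k^d$ contributions separate, since their rates differ. Define
\begin{equation*}
\rho(\varepsilon) \triangleq \frac{bdk\log(4bd/\varepsilon)}{\varepsilon^2} + \frac{k^d \log(4k^d/\varepsilon)}{\varepsilon^2} + \frac{\log(3/\delta)}{2\varepsilon^2}.
\end{equation*}
By the non-trivial-case bounds established in the first step, $\rho(1) \le n$, while $\rho(\varepsilon)\to \infty$ as $\varepsilon\to 0$, so by the intermediate value theorem there exists $\varepsilon \in (0,1]$ with $\rho(\varepsilon) = n$, and this satisfies the hypothesis of Lemma \ref{lem:densalg}.

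Finally, from $\rho(\varepsilon) = n$ one reads off $\varepsilon \ge 1/n$ (since each summand is at least $1/(2n)$ in the regime of interest), which lets us replace $\log(1/\varepsilon)$ by $\log n$ inside the logs. Using the elementary inequality $\sqrt{a+b+c} \le \sqrt{a} + \sqrt{b} + \sqrt{c}$ then gives
\begin{equation*}
\varepsilon \le \sqrt{\frac{bdk\log(4bdn)}{n}} + \sqrt{\frac{k^d\log(4k^dn)}{n}} + \sqrt{\frac{\log(3/\delta)}{2n}},
\end{equation*}
and plugging into $\|V_n - p\|_1 \le 3\min_{q\in \tsH_{d,b}^k}\|p - q\|_1 + 7\varepsilon$ yields \eqref{eqn:tuckerfiniteb} after absorbing the factor of $\sqrt{2}$. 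The only substantive obstacle is bookkeeping: verifying that separating the two combinatorial contributions gives exactly the two square roots appearing in \eqref{eqn:tuckerfiniteb} and checking that the trivial-case analysis covers every regime where $\rho(1) \le n$ can fail.
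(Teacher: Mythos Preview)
Your proposal is correct and follows essentially the same route as the paper's proof: cover $\tsH_{d,b}^k$ via Corollary~\ref{cor:tuckcover}, apply Lemma~\ref{lem:densalg}, dispose of trivial regimes, solve $\rho(\varepsilon)=n$ via the intermediate value theorem, and bound $\varepsilon$ using $\varepsilon\ge 1/n$. The only cosmetic difference is that the paper keeps an extra factor of $2$ in its definition of $\rho$ (writing $2bdk\log(\cdot)$ and $2k^d\log(\cdot)$) so that the final bound on $\varepsilon$ lands exactly on the stated constants, whereas you obtain a slightly tighter bound and then absorb the $\sqrt{2}$ at the end; both are valid.
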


    \begin{proof}[Proof of Proposition~\ref{prop:finiteantucker}]
      This proof is very similar to the proof of Proposition~\ref{prop:finiteantappx} and we will provide fewer intermediate steps when they are virtually identical to those that proof.

      Similarly to the proof of Proposition~\ref{prop:finiteantappx}, we can assume $n\geq 2$. Let $1\geq \varepsilon>0$ be arbitrary. 
       By Corollary~\ref{cor:tuckcover} there exists $p_1,\ldots,p_M \in \tsH_{d,b}^k$ such that $M \le \left(\frac{4bd}{\varepsilon} \right)^{bdk} \left( \frac{4k^d}{\varepsilon}\right)^{k^d}$ and for all $q\in \tsH^k_{d,b}$ there exists $i\leq M$ with  $\|p_i-q\|_1\leq \varepsilon$.

      Now, by applying Lemma~\ref{lem:densalg} with the same $\varepsilon$, there exists a deterministic algorithm which, for all densities $p$, can output an index $j\in [M]$ such that 
      \begin{align*} 
        \left\|p_j - p \right\|_1  \leq 7\varepsilon+3\min_{q\in\tsH_{d,b}^k}\|p-q\|_1,
      \end{align*}
      with probability at least $1-\delta$ given at least $N$ samples from the distribution, where 
      \begin{equation*}
        N\ge\frac{\log(3M^2/\delta)}{2\varepsilon^2}
        =\frac{\log(M)}{\varepsilon^2} + \frac{\log\left(3/\delta\right)}{2\varepsilon^2}.
      \end{equation*}
       Now we can bound
        \begin{equation*}
            \frac{\log(3M^2/\delta)}{2\varepsilon^2}
        \leq  \frac{2bdk\log( \frac{4bd}{\varepsilon})}{\varepsilon^2}+\frac{2k^d\log( \frac{4k^d}{\varepsilon})}{\varepsilon^2}+\frac{\log(\frac{3}{\delta})}{2\varepsilon^2}.
        \end{equation*}

      Note that:
      \begin{itemize}[nosep] 
        \item If $n\leq 6bdk\log( 4bdn)$, then $7\sqrt{\frac{2bdk\log( 4bdn)}{n}}\geq 7/\sqrt{3}\geq 2$, making \eqref{eqn:tuckerfiniteb} trivial.
        \item If $n\leq 6k^d\log( 4k^dn)$ then $7\sqrt{\frac{2k^d\log( 4k^dn)}{n}}\geq 7/\sqrt{3}\geq 2$, also making \eqref{eqn:tuckerfiniteb} trivial.
        \item Similarly, if $n\leq 3\log(\frac{3}{\delta})$, then  $7\sqrt{\frac{\log(\frac{3}{\delta})}{2n}}\geq 2$, making \eqref{eqn:tuckerfiniteb} trivial yet again.
      \end{itemize} 
      Thus we can assume that 
      \begin{align}
        n&\geq 3 \max\left(2bdk\log( 4bdn), 2k^d\log( 4k^dn),\log\left(\frac{3}{\delta}\right)\right) \notag \\
        &\geq 2bdk\log( 4bdn)+2k^d\log( 4k^dn)+\log\left(\frac{3}{\delta}\right). \label{eqn:saved-tucker}
      \end{align}
        We now define  $\rho(\varepsilon)$ as 
      \begin{equation}
         \rho(\varepsilon) := \frac{2bdk\log( \frac{4bd}{\varepsilon})}{\varepsilon^2}+\frac{2k^d\log( \frac{4k^d}{\varepsilon})}{\varepsilon^2}+\frac{\log(\frac{3}{\delta})}{2\varepsilon^2}.
      \end{equation} 
      So \eqref{eqn:saved-tucker} gives us $\rho(1)\leq n$ and, as before, $\lim_{x\rightarrow 0}\rho(x)=\infty$. Together with the mean value theorem it follows we can now pick $1\geq \varepsilon> 0$ such that 
      \begin{equation}
      \rho(\varepsilon) = \frac{2bdk\log( \frac{4bd}{\varepsilon})}{\varepsilon^2}+\frac{2k^d\log( \frac{4k^d}{\varepsilon})}{\varepsilon^2}+\frac{\log(\frac{3}{\delta})}{2\varepsilon^2} =n.
          \label{balance2}
      \end{equation}
      We can now apply the estimator from Lemma~\ref{lem:densalg} to select the estimator $V_n$. As we have shown before the estimator in Lemma~\ref{lem:densalg} outputs a density in $\sH_{d,b}^k$ such that
      \begin{align}
        \label{keyy2}
        \|V_n-p\|_1\leq 7\varepsilon+3\min_{q \in \tsH_{d,b}^k}\|p-q\|_1.
      \end{align}
        Now, note that by \eqref{balance2}, we have 
        \begin{equation*}
          \varepsilon=\sqrt{\frac{2bdk\log( \frac{4bd}{\varepsilon})}{n}+\frac{2k^d\log( \frac{4k^d}{\varepsilon})}{n}+\frac{\log(\frac{3}{\delta})}{2n}}\geq \sqrt{\frac{1}{2n}}\geq \frac{1}{n},
        \end{equation*}
        since $0<\delta,\varepsilon\leq 1$ and $n\ge 2$. Plugging this back into \eqref{balance2}, we obtain
        \begin{align*}
          \varepsilon
          &\leq \sqrt{\frac{2bdk\log( 4bdn)}{n}+\frac{2dk^d\log( 4k^dn)}{n}+\frac{\log(\frac{3}{\delta})}{2n}}\\
          &\le \sqrt{\frac{2bdk\log( 4bdn)}{n}}+\sqrt{\frac{2k^d\log( 4k^dn)}{n}}+\sqrt{\frac{\log(\frac{3}{\delta})}{2n}}.
        \end{align*}
        The result follows upon plugging this back into inequality~\eqref{keyy2}.
    \end{proof}
    Now we can prove our distribution-free bound.
    \begin{prop}
      \label{total2}
      Let $L\geq 2$, $0<\delta\le 1$ and $k,n \in \nn$. Then there exists $b \in \nn$ and an estimator $V_n \in \tsH_{d,b}^k$ such that for any density $p \triangleq \sum_{S\in [k]^d} W_S \prod_{i=1}^d p_{i,S_i}$ where $p_{i,j}\in \lip_L$ and $W$ is a probability tensor, the following holds with probability at least $1-\delta$,
      \begin{align}
      \label{eqn:total2maineq}
        \|p-V_n\|_1\leq \frac{21dk^{1/3}L^{\frac{d+3}{12}}}{n^{\frac{1}{3}}}\sqrt{\log(3Ldkn)}+7\sqrt{\frac{2k^d\log( 4k^dn)}{n}}+7\sqrt{\frac{\log(\frac{3}{\delta})}{2n}}
      \end{align}
      where $V_n$ is a function of $X_1,\ldots,X_n \simiid p$.

      This also holds with ``$L\leq 2$'' replacing ``$L\ge 2$'' and the following inequality replacing \eqref{eqn:total2maineq}
      \begin{equation*}
        \|p-V_n\|_1\leq \sqrt{d}\frac{k^{1/3}}{n^{1/3}}\left[L^{1/3}\exp\left(\frac{L^2(d-1)}{24}\right)+20\sqrt{\log(7dnk)}\right]+7\sqrt{\frac{2k^d\log( 4k^dn)}{n}}+7\sqrt{\frac{\log(\frac{3}{\delta})}{2n}}.
      \end{equation*}
    \end{prop}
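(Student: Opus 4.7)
The plan is to adapt the proof of Proposition \ref{prop:total} essentially verbatim, substituting Proposition \ref{prop:finiteantucker} in place of Proposition \ref{prop:finiteantappx}. The only real novelty is that the extra $7\sqrt{2k^d\log(4k^dn)/n}$ term coming from the Tucker finite-sample bound must be carried through to the final inequality unchanged, since we will tune $b$ but not $k$.

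First I would bound the approximation error $\min_{q\in \tsH_{d,b}^k}\|p-q\|_1$. For the target $p = \sum_{S\in [k]^d} W_S \prod_{i=1}^d p_{i,S_i}$ the natural competitor is $\widetilde{p} = \sum_{S\in [k]^d} W_S \prod_{i=1}^d \proj_{\sH_{1,b}} p_{i,S_i}$, which lies in $\tsH_{d,b}^k$ by construction. The triangle inequality, Corollary \ref{cor:rankproj} (to identify the marginal-by-marginal projection with the $L^2$ projection onto $\sH_{d,b}^1$), H\"older's Inequality, and Proposition \ref{prop:l2projbnd}, combined with $\sum_{S} W_S = 1$, yield (for $b^2 \ge L^2/12$, $L\ge 2$)
\begin{equation*}
\min_{q\in \tsH_{d,b}^k}\|p-q\|_1 \le \frac{\sqrt{d}\, L^{(d+3)/4}}{3b},
\end{equation*}
and the analogous bound $\frac{\sqrt{d}\,L}{3b}\exp\left(\frac{L^2(d-1)}{24}\right)$ in the $L\le 2$ regime. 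This is exactly the mixture-weight step from the proof of Proposition \ref{prop:total}, since the convex combination by $W_S$ collapses just as the $w_i$ did there.

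Next I would feed this bound into Proposition \ref{prop:finiteantucker} and set $b = \lceil n^{1/3} L^{(d+3)/6} k^{-1/3}\rceil$, exactly as in Proposition \ref{prop:total}. The trivial cases ($n < kL$ when $L\ge 2$, and $k > n$ when $L\le 2$) are handled identically: in each such case one of the terms on the RHS of the target inequality already exceeds $2$, so the $\|p-V_n\|_1\le 2$ bound from the triangle inequality is automatic. In the nontrivial regime, the chosen $b$ satisfies $b^2 \ge L^2/12$ and balances $\sqrt{d}L^{(d+3)/4}/b$ against $7\sqrt{2bdk\log(4bdn)/n}$; the same algebra used in Proposition \ref{prop:total} collapses these two contributions into $\frac{21dk^{1/3}L^{(d+3)/12}}{n^{1/3}}\sqrt{\log(3Ldkn)}$ (or the $L\le 2$ counterpart). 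The $7\sqrt{2k^d\log(4k^dn)/n}$ and $7\sqrt{\log(3/\delta)/(2n)}$ summands pass through untouched, since neither depends on $b$.

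The main obstacle is purely bookkeeping: one must verify that the new $k^d$-term does not upset the choice of $b$ (it does not, being $b$-independent) and re-check both trivial thresholds in the $L\ge 2$ and $L\le 2$ regimes. No new analytic tools beyond those already assembled for Proposition \ref{prop:total} and Proposition \ref{prop:finiteantucker} are required.
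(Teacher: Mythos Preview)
Your proposal is correct and follows essentially the same approach as the paper's own proof: bound the approximation error via the Tucker competitor $\sum_{S} W_S \prod_i \proj_{\sH_{1,b}} p_{i,S_i}$ using Corollary~\ref{cor:rankproj} and Proposition~\ref{prop:l2projbnd}, invoke Proposition~\ref{prop:finiteantucker}, set $b=\lceil n^{1/3}L^{(d+3)/6}k^{-1/3}\rceil$ (resp.\ $b=\lceil n^{1/3}k^{-1/3}L^{2/3}\rceil$ for $L\le 2$), dispose of the trivial regimes, and carry the $b$-independent $k^d$ term through unchanged. The paper even remarks, as you do, that the non-$k^d$ summands match those in the proof of Proposition~\ref{prop:total} so the algebra there can be reused verbatim.
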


    \begin{proof}[Proof of Proposition~\ref{total2}]
       This proof is very similar to the proof of Proposition \ref{prop:total} and we will provide fewer intermediate steps when they are virtually identical to those that proof. We begin with the $L\ge 2$ case. 
       Consider some density $p$ like that from the theorem statement  
      $$
      p=\sum_{S\in[k]^d} W_S\prod_{i=1}^d p_{i,S_i}.
      $$
       Since  $\sum_{S\in[k]^d} W_S \prod_{i=1}^d \proj_{\sH_{1,b}}p_{i,S_i}$ is an element of $\tsH_{d,b}^k$, using Corollary \ref{cor:rankproj}, H\"older's Inequality, and Proposition~\ref{prop:l2projbnd}  yields the following when  $b^2 \ge L^2/12$
      \begin{align}
        \min_{q\in \tsH_{d,b}^k} \left \| p - q \right\|_1
        & \le\left\|\sum_{S\in[k]^d} W_S\prod_{i=1}^d p_{i,S_i}-\sum_{S\in[k]^d} W_S\prod_{i=1}^d \proj_{\sH_{1,b}}p_{i,S_i}\right\|_1\notag \\
        & =\left\|\sum_{S\in[k]^d} W_S\prod_{i=1}^d p_{i,S_i}-\sum_{S\in[k]^d} W_S\proj_{\sH_{d,b}^1}\prod_{i=1}^d p_{i,S_i}\right\|_1\notag\\
        &\leq \sum_{S\in[k]^d} W_S \left\|\prod_{i=1}^d p_{i,S_i}- \proj_{\sH_{d,b}^1} \prod_{i=1}^d  p_{i,S_i}\right\|_1\notag \\
        &\le \sum_{S\in[k]^d} W_S \frac{\sqrt{d}L^{\frac{d+3}{4}}}{3b} =\frac{\sqrt{d}L^{\frac{d+3}{4}}}{3b}.\label{eqn:notreallynew}
      \end{align}
      Combining this with the estimator from Proposition~\ref{prop:finiteantucker} we get that, for any $b$ such that $b^2\geq L^2/12$, we have
      \begin{equation*}
        \|p-V_n\|_1\leq \frac{\sqrt{d}L^{\frac{d+3}{4}}}{b}  +  7\sqrt{\frac{2bdk\log( 4bdn)}{n}}+7\sqrt{\frac{2k^d\log( 4k^dn)}{n}}+7\sqrt{\frac{\log(\frac{3}{\delta})}{2n}}.
      \end{equation*}
      If $n< k L$, the RHS of \eqref{eqn:total2maineq} is greater than 2, which means that \eqref{eqn:total2maineq} holds trivially. Thus we assume $n\geq k L$. 
      Since $b$ doesn't appear in the third summand in the previous inequality, and the first, second, and fourth summand are exactly the same as those from \eqref{eqn:mv-estimator-middle} in the proof Proposition~\ref{prop:total}, we can set  $b=\left \lceil n^{\frac{1}{3}}L^{\frac{d+3}{6}}k^{-1/3}\right \rceil$ and proceed exactly as we did in that proof (again $b^2 \ge L^2/12$ so Proposition~\ref{prop:l2projbnd} holds). We then obtain
      \begin{equation*}
        \|p-V_n\|_1\leq \frac{21dk^{1/3}L^{\frac{d+3}{12}}}{n^{\frac{1}{3}}}\sqrt{\log(3Ldkn)}+7\sqrt{\frac{2k^d\log( 4k^dn)}{n}}+7\sqrt{\frac{\log(\frac{3}{\delta})}{2n}},
      \end{equation*}
      as expected. 

      For the $L\leq 2$ case, \eqref{eqn:notreallynew} becomes
      \begin{equation*}
        \min_{q\in \tsH_{d,b}^k} \left \| p - q \right\|_1 \leq \sqrt{d}\frac{L}{3b}\exp\left(\frac{L^2(d-1)}{24}\right),
      \end{equation*}
      from which we get
      \begin{equation*}
        \|p-V_n\|_1\leq \sqrt{d}\frac{L}{b}\exp\left(\frac{L^2(d-1)}{24}\right)+  7\sqrt{\frac{2bdk\log( 4bdn)}{n}}+7\sqrt{\frac{2k^d\log( 4k^dn)}{n}}+7\sqrt{\frac{\log(\frac{3}{\delta})}{2n}}.
      \end{equation*}
      Since $b$ doesn't appear in the third summand in the previous inequality, and the rest of the inequality is exactly the same as in the proof of Proposition~\ref{prop:total}, we can again let $b=\left \lceil n^{1/3}k^{-1/3}L^{2/3}\right \rceil$, assume $n\ge k$ and, proceed identically as we did in the proof of Proposition~\ref{prop:total}. This yields
      \begin{equation*}
        \|p-V_n\|_1\leq \sqrt{d}\frac{k^{1/3}}{n^{1/3}}\left[L^{1/3}\exp\left(\frac{L^2(d-1)}{24}\right)+20\sqrt{\log(7dnk)}\right]+7\sqrt{\frac{2k^d\log( 4k^dn)}{n}}+7\sqrt{\frac{\log(\frac{3}{\delta})}{2n}}, 
      \end{equation*}
      as expected.
    \end{proof}

    \newpage
    \subsection{Lower Bound: Standard Histogram} 
    \begin{proof}[Proof of Proposition \ref{prop:multilower}]
      Let $p\in \sD_d$ with $p = \prod_{i=1}^d p_i$ where $p_1$ is the density from Lemma \ref{lem:l1lower} for $L=2$, i.e. $p_1(x) = 2x$, and $p_i(x)\equiv 1$ for $i>1$. Let $\left(Y_1,\ldots,Y_d\right) \sim V_n$ and $\left(X_1,\ldots,X_d\right) \sim p$. Because total variation distance is never increased through mappings of the random variables (see Theorem 5.2 in \cite{devroye01}) we have that $\left\|V_n - p\right\|_1 \ge \left\|f -p_1 \right\|_1$ where $f$ is the probability density associated with $Y_1$ . We will now show that $f$ is an element of $\sH_{1,b}$. Let $S\subset [0,1]$ be an arbitrary (Borel) measurable set and note that $V_n$ has the form $\sum_{A \in \left[b\right]^d} \hat{w}_A h_{d,b,A}$. Then we have that 
      \begin{align*}
        P\left(Y \in S\right)
        & = \int_S f d\lambda \\
        & = \int_{S\times [0,1] \times \cdots \times [0,1]} V_n d \lambda \\
        & = \int_{S\times [0,1] \times \cdots \times [0,1]} \sum_{A \in \left[b\right]^d} \hat{w}_A h_{d,b,A} d \lambda \\
        & = \sum_{A \in \left[b\right]^d}\hat{w}_A \int_{S\times [0,1] \times \cdots \times [0,1]}  h_{d,b,A} d \lambda \\
        & = \sum_{A \in \left[b\right]^d}\hat{w}_A \int_{S\times [0,1] \times \cdots \times [0,1]}\prod_{i=1}^d  h_{1,b,A_i} d \lambda \\
        & = \sum_{A \in \left[b\right]^d}\hat{w}_A \left(\int_S  h_{1,b,A_1}d \lambda\right) \left(\int_{[0,1]}  h_{1,b,A_2}d \lambda \right)\cdots \left(\int_{[0,1]}  h_{1,b,A_d}  d \lambda \right) \\
        & = \sum_{A \in \left[b\right]^d}\hat{w}_A \int_S h_{1,b,A_1} d \lambda\\
        & = \int_S \sum_{A \in \left[b\right]^d}\hat{w}_A h_{1,b,A_1} d \lambda.
      \end{align*}
      note that $\sum_{A \in \left[b\right]^d}\hat{w}_A h_{1,b,A_1}$ is a histogram and thus the density associated with $f$ is a histogram. Using this fact with the earlier mentioned inequality we have that
      \begin{align*}
        \left\|V_n - p \right\|_1 
        &\ge\left\|f - p_1\right\|_1\\
        &\ge \min_{h \in \sH_{1,b}}\left\|h - p_1\right\|_1.
      \end{align*}
      From Lemma \ref{lem:l1lower} it then follows that
      \begin{align*}
        \left\|V_n - p \right\|_1  \ge \frac{1}{2b}.
      \end{align*}
      Let $D>0$ be arbitrary. From our assumption that $n/b^d\to \infty$ it follows that, for sufficiently large $n$, that $n/b^d \ge (2D)^d$ and furthermore
      \begin{align*}
        n/b^d \ge (2D)^d 
        \iff \sqrt[d]{n}/b \ge 2D
        \iff 1/(2b) \ge D/\sqrt[d]{n} \Rightarrow \left\|V_n - p \right\|_1 \ge D/\sqrt[d]{n}
      \end{align*}
      so $\left\|V_n - p\right\|_1 \in \omega(1/\sqrt[d]{n})$ by definition.
    \end{proof}

    \newpage
    \section{Experimental Setting} \label{appx:exp}
    Consider the problem of finding some density estimator $\hat{p}$ with minimal $L_2$ distance to an unknown density $p$ ($p$ is the various projections of MNIST and Diabetes from the main text). This is equivalent to minimizing the squared $L^2$ loss:
    \begin{align}
      &\int_{\left[0,1\right]^d} \left(p(x) - \hat{p}\left(x\right) \right)^2dx \notag\\
      &= \int_{\left[0,1\right]^d}\hat{p}\left(x\right)^2 dx - 2\int_{\left[0,1\right]^d} p(y)\hat{p}(y) dy
      + \int_{\left[0,1\right]^d} p(z)^2 dz.\label{eqn:l2loss}
    \end{align}
    Because the right term in (\ref{eqn:l2loss}) does not depend on $\hat{p}$ it can be ignored when finding optimal $\hat{p}$. The left term in (\ref{eqn:l2loss}) is known. The middle term in (\ref{eqn:l2loss}) can be estimated with the following approximation
    \begin{equation*}
      \int_{\left[0,1\right]^d} p(x)\hat{p}(x) dx = \E_{X\sim p}\left[ \hat{p}(X) \right] \approx \frac{1}{n}\sum_{i=1}^n \hat{p}\left(X_i\right)
    \end{equation*}
    \sloppy where $\sX = X_1,\ldots, X_n \simiid p$. We can use this to find a good estimate $\hat{H}\in \sR^k_{d,b}$ for $p$ which represents $\sH^k_{d,b}$ or $\tsH^k_{d,b}$:

    \begin{align}
      &\arg \min_{\hat{H} \in \sR_{d,b}^k} \int_{\left[0,1\right]^d} \left(\hat{H}\left(x\right) - \hat{p}\left(x\right) \right)^2dx
      = \arg \min_{\hat{H} \in \sR_{d,b}^k} \left<\hat{H},\hat{H}\right> - 2\int_{\left[0,1\right]^d} \hat{H}(x)p(x) dx \notag\\
      &\approx \arg \min_{\hat{H} \in \sR_{d,b}^k} \left<\hat{H},\hat{H}\right> - 2\frac{1}{n}\sum_{i=1}^n \hat{H}(X_i). \label{eqn:l2obj}
    \end{align}

    \sloppy Recall that the standard histogram estimator is $H = H_{d,b}\left(\sX\right) = \frac{1}{n}\sum_{i=1}^n\sum_{A \in \left[b\right]^d}  h_{d,b,A} \1\left(X_i \in \Lambda_{d,b,A}\right)$ and let $\hat{H} = \sum_{A\in \left[b\right]^d}\hat{w}_A h_{d,b,A}$.
    We have the following
    \begin{align*}
      \left<\hat{H},H\right>
      &=  \left<\sum_{A \in \left[b\right]^d} \hat{w}_A h_{d,b,A},\frac{1}{n}\sum_{i=1}^n \sum_{B\in \left[b\right]^d} h_{d,b,B} \1\left(X_i \in \Lambda_{d,b,B}\right) \right>\\
      & =  \frac{1}{n}\sum_{i=1}^n \sum_{A\in \left[b\right]^d}  \hat{w}_A \1\left(X_i \in \Lambda_B\right)b^d =  \frac{1}{n}\sum_{i=1}^n \hat{H}(X_i).
    \end{align*}
    As a consequence (\ref{eqn:l2obj}) is equal to
    \begin{align*}
      \arg \min_{\hat{H} \in \sR_{d,b}^k}\left<\hat{H},\hat{H}\right> -2\left<H,\hat{H}\right>
      &=\arg \min_{\hat{H} \in \sR_{d,b}^k}\left<\hat{H},\hat{H}\right> -2\left<H,\hat{H}\right> + \left<H,H \right>\\
      &= \arg \min_{\hat{H} \in \sR_{d,b}^k}\left\|H-\hat{H}\right\|_2^2. \label{eqn:fitloss}
    \end{align*}
    Using the $U_{d,b}$ operator we can reformulate this into a tensor factorization problem
    \begin{equation*}
      \min_{\hat{T} \in \sQ_{d,b}^k}\left\|H-U_{d,b}(\hat{T})\right\|_2^2
      = \min_{\hat{T} \in \sQ_{d,b}^k}b^d\left\|U_{d,b}^{-1}\left(H\right)-\hat{T}\right\|_2^2
    \end{equation*}
    where $\sQ_{d,b}^k$ could be either $\sT_{d,b}^k$ or $\tsT_{d,b}^k$. Because of this equivalence, to find estimates in $\sH_{d,b}^k$ or $\tsH_{d,b}^k$ we can simply use nonnegative tensor decomposition algorithms, which minimize $\ell^2$ loss, to find NNTF histograms that approximate $H$.

    \newpage
    \section{Nonexistence of Infinite Tensor Decomposition}\label{appx:nonexist}
    Let $p:\rn^2 \to \rn$ be a probability density function. Consider the possibility of decomposing $p$ as follows
    \begin{equation}\label{eqn:decomp}
      p(x,y) = \sum_{i=1}^\infty w_i f_i(x) g_i(y)
    \end{equation}
    where, for all $i$, $w_i \ge0$ and $f_i$ and $g_i$ are probability densities. We are going to show that this is not always possible, which we will do by contradiction. Let $\lambda$ be the Lebesgue measure (dimensionality will be left implicit).  We are going to use the following proposition which we will prove later.

    \begin{prop}\label{prop:weirdset}
      There exists a set $E \subset [0,1]\times [0,1]$ such that $\lambda(E)>0$ and for all non-null measurable sets $A,B \subset [0,1]$ we have that $\lambda(E \cap A\times B) <\lambda( A\times B)$.
    \end{prop}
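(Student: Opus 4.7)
The plan is to construct $E$ explicitly using a fat Cantor set together with the torus structure on $[0,1]$. Let $S\subseteq [0,1]$ be a Smith--Volterra--Cantor (``fat Cantor'') set, so $S$ is closed, nowhere dense, and $0<\lambda(S)<1$. Define
\[
E \;=\; \l\{ (x,y)\in [0,1]^2 : (y-x)\bmod 1 \in S \r\}.
\]
Fubini applied to the vertical slices immediately yields $\lambda(E)=\lambda(S)>0$.

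For the main inequality I would argue by contradiction. Assume there exist measurable $A,B\subseteq[0,1]$ with $\lambda(A),\lambda(B)>0$ and $\lambda(E\cap(A\times B))=\lambda(A\times B)$, equivalently $\lambda((A\times B)\cap E^c)=0$. The key object is the ``correlation function'' on the torus $\mathbb{T}=\rn/\mathbb{Z}$,
\[
h(z) \;=\; \int_0^1 \1_A(x)\,\1_B\!\l((x+z)\bmod 1\r)\,dx,
\]
which is a convolution of two bounded indicator functions on a compact group and is therefore continuous (e.g.\ via $L^1$-continuity of torus translations applied to $\1_B$). Fubini gives $\int_{\mathbb{T}} h(z)\,dz = \lambda(A)\lambda(B)>0$.

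The change of variables $(x,y)\mapsto (x, (y-x)\bmod 1)$ on $[0,1]\times[0,1]$ rewrites $\lambda((A\times B)\cap E^c)$ as $\int_{\mathbb{T}} h(z)\,\1_{S^c}(z)\,dz$, so the contradiction hypothesis forces $h=0$ almost everywhere on $S^c$. Since $h$ is continuous and $S^c$ is open, the intersection $\{h>0\}\cap S^c$ is an open subset of $\mathbb{T}$ of Lebesgue measure zero, hence empty; thus $\{h>0\}\subseteq S$. But $\{h>0\}$ is itself open and $S$ is nowhere dense, so $\{h>0\}=\emptyset$, i.e., $h\equiv 0$, contradicting $\int_{\mathbb{T}} h>0$.

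The step I expect to be the main obstacle is the passage from the pointwise statement ``$B-x\subseteq S$ modulo null for a.e.\ $x\in A$'' to a topological contradiction: a direct route via Steinhaus's theorem applied to $B-A$ can be made to work, but it is awkward because the inclusion only holds modulo null sets and Steinhaus only guarantees an interval inside $B-A$, not inside $S$. The convolution formulation above side-steps this by packaging the content of Steinhaus inside the continuity of $h$; the only remaining technical care is to verify continuity of $h$ on $\mathbb{T}$ and to justify the Fubini/change-of-variables identity that turns $\lambda((A\times B)\cap E^c)$ into $\int_{\mathbb{T}} h\,\1_{S^c}\,dz$.
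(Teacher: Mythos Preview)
Your proof is correct. Both your construction and the paper's define $E$ as the preimage of a one-dimensional set under the difference map, but the choice of that set and the verification differ. The paper picks $D\subset\rn$ with both $D$ and $D^c$ essentially dense (citing Fremlin 134J(a)) and then invokes an Erd\H{o}s--Oxtoby equivalence as a black box to conclude that $\lambda(E\cap A\times B)>0$ and $\lambda(E^c\cap A\times B)>0$ for all non-null $A,B$; the strict inequality is then immediate from additivity. You instead take a fat Cantor set $S$, which is \emph{not} essentially dense: $\lambda(E)>0$ comes for free from Fubini, and the strict inequality is obtained directly via continuity of the cross-correlation $h$ on the torus---effectively a self-contained proof of one direction of Erd\H{o}s--Oxtoby in the special case where the complement is open and dense. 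Your route is more elementary (fat Cantor sets are textbook, and you avoid the external reference for the equivalence), at the cost of proving slightly less: the paper's $E$ additionally satisfies $\lambda(E\cap A\times B)>0$ for every non-null $A,B$, while yours need not, since $S$ fails to be essentially dense. For the proposition as stated that extra property is irrelevant.
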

    Let $E$ be a set satisfying the property in Proposition \ref{prop:weirdset}. Let $\1_S$ be the indicator function a set $S$. We will let $p = \1_E$ and assume that $p$ has a decomposition as in (\ref{eqn:decomp}).

    We will assume that $w_1 >0$. Clearly we have that $p - w_1 f_1 g_1$ is an almost everywhere (a.e.) nonnegative function (all products of functions here are outer products). Let $\varepsilon >0$ such that $A \triangleq f_1^{-1}([\varepsilon,\infty))$ and $B\triangleq g_1^{-1}([\varepsilon,\infty))$ have positive measure. Such an $\varepsilon$ must exist otherwise $f_1$ and $g_1$ are $0$ a.e.. Now we have that $\varepsilon^2 \1_A  \1_B \le f_1  g_1$. And thus $p - w_1\varepsilon^2 \1_A  \1_B \ge 0$ a.e or equivalently $\lambda(E)^{-1}\1_E - w_1\varepsilon^2 \1_{A\times B} \ge 0$ a.e.. From our definition of $E$ we know that $\lambda(A\times B \setminus E) = \lambda(A\times B) - \lambda(E \cap A\times B) >0$ so $\lambda(E)^{-1}\1_E - w_1\varepsilon^2 \1_{A\times B}$ is negative on a set of positive measure, a contradiction.

    We will now address the existence of the set $E$. The most direct statement of the existence of such an $E$ can be found in \cite{kendall02}, the following is the exact statement from the text.
    \begin{thm}[\cite{kendall02} Theorem 2.1]
      There exist Borel measurable subsets $E\subset \left[0,1 \right]^2$ of positive measure which are rectangle free, so that if $A\times B \subseteq E$ then $\area\left(A\times B\right) = 0$.
    \end{thm}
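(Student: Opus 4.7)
The plan is to give a short explicit construction and derive rectangle-freeness from the Steinhaus theorem on difference sets. Define the ``diagonal'' bad set
\[
N := \{(x,y) \in [0,1]^2 : x - y \in \mathbb{Q}\}
\]
and set $E := [0,1]^2 \setminus N$.

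First I would verify that $E$ is Borel with positive measure. We have $N = \bigcup_{q \in \mathbb{Q}} \{(x,y) \in [0,1]^2 : x - y = q\}$, a countable union of closed line segments, hence $F_\sigma$ (so Borel), and each segment has two-dimensional Lebesgue measure zero. Thus $\lambda(N) = 0$ and $\lambda(E) = 1$. Second, for rectangle-freeness, suppose $A \times B \subseteq E$ for measurable $A, B \subseteq [0,1]$, and suppose for contradiction that both $\lambda(A) > 0$ and $\lambda(B) > 0$. By the Steinhaus theorem the difference set $A - B$ contains an open interval, and therefore some rational $q = a - b$ with $a \in A$, $b \in B$. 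Then $(a,b) \in A \times B$ and $a - b \in \mathbb{Q}$, so $(a,b) \in N$, contradicting $A \times B \subseteq E$. Hence $\lambda(A)\lambda(B) = 0$ and $\area(A \times B) = 0$.

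The main conceptual obstacle is that naive geometric constructions --- e.g.\ removing a small slice from each dyadic rectangle to destroy potential product inclusions --- collide with the Lebesgue density theorem: if $E$ fails to fill every sufficiently small dyadic rectangle up to measure $1 - c$ for some fixed $c > 0$, then Lebesgue differentiation forces $\lambda(E) = 0$. The Steinhaus route sidesteps this density obstruction by replacing a geometric argument with an arithmetic one: the countable family of rational diagonals $\{x - y = q\}$ is null yet combinatorially rich enough to meet every positive-measure product set. If Steinhaus were unavailable one could fall back on a probabilistic multiscale construction combined with a Borel--Cantelli bound over dyadic rectangles, but that route is markedly more technical and, for the pointwise-inclusion formulation stated here, unnecessary.
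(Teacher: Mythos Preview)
Your proof is correct. The paper does not actually prove this statement --- it is quoted verbatim as Theorem~2.1 of \cite{kendall02}, whose original proof is the random multiscale construction you allude to in your closing paragraph. The paper explicitly remarks, however, that for the pointwise-inclusion formulation stated here ``it is not particularly difficult to construct'' such a set, citing Erd\H{o}s--Oxtoby \cite{erdos55}; your diagonal construction $E=\{(x,y):x-y\notin\mathbb{Q}\}$ together with Steinhaus is exactly the elementary route the paper has in mind. Indeed, the paper then proves the \emph{stronger} essentially-rectangle-free property (its Proposition on the set $E$ with $\lambda(E\cap A\times B)<\lambda(A\times B)$) using precisely the same diagonal template $\{(x,y):x-y\in D\}$, only with $D$ taken so that both $D$ and $D^{C}$ are essentially dense --- your choice $D=\mathbb{R}\setminus\mathbb{Q}$ handles the pointwise statement but would not suffice for that strengthening, since $\mathbb{Q}$ is null. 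So: same underlying idea as the paper's surrounding discussion, more elementary than the cited random construction, and your Steinhaus argument is a clean substitute for invoking the Erd\H{o}s--Oxtoby theorem directly.
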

    That paper builds the set $E$ via a random construction and contains an image which showing an example that approximates a randomly sampled $E$. Their construction seems to imply that the condition ``$A\times B \subseteq E$'' was intended to be interpreted measure theoretically, i.e. ``$\area(A\times B \setminus E) =0$''; it is not particularly difficult to construct a measurable subset of $[0,1]\times [0,1]$ which contains all but a null set of $[0,1]\times [0,1]$ and is ``rectangle free'' as described in the theorem statement (see \cite{erdos55} and references therein). If the measure theoretic strengthening is true it would imply the existence of the set $E$ from Proposition \ref{prop:weirdset} above. Since we are not \emph{totally} certain that this strengthening is possible we include a proof of the existence of $E$ above.

    For a topological space $(\Omega,\tau)$ equipped with a Borel measure $\mu$, a set $S\subseteq \Omega$ is called \emph{essentially dense} if, for any nonempty open set $I$, $\mu(I\cap S)>0$. For any measurable set in $\rn^d$ we will equip it with the standard subspace topology and measure induced by the standard Lebesgue measure. There exists a measurable set $D \subset \rn$ such that $D$ and $D^C$ are essentially dense (see \cite{fremlin00} 134J(a)). The following is a simplification of Theorem 1 in \cite{erdos55} that we will use to construct $E$.
    \begin{thm}[\cite{erdos55} Theorem 1]\label{thm:rectangle}
      For a measurable set of the form $E = \left\{(x,y): x-y \in D \right\}$ the following two conditions are equivalent
      \begin{enumerate}[nosep]
        \item $D$ is essentially dense on $\rn$.
        \item $\lambda\left(E\cap A\times B \right) > 0$ for all $A,B$ such that $\lambda(A)\lambda(B)>0$.
      \end{enumerate}
    \end{thm}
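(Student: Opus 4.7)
The plan is to reduce both implications to a single Fubini computation that rewrites $\lambda(E \cap (A \times B))$ as a weighted integral over $D$. Specifically, with the substitution $u = x - y$, $v = y$, Fubini gives
\begin{equation*}
    \lambda(E \cap (A \times B)) \;=\; \int_\rn \1_D(u)\, f(u)\, du,
    \qquad f(u) \triangleq \lambda\bigl(B \cap (A - u)\bigr).
\end{equation*}
The function $f$ is a convolution of $\1_A$ and the reflection of $\1_B$, and so it is continuous on $\rn$; moreover $\int f = \lambda(A)\lambda(B)$, and $f$ is supported on $A-B$. These three facts about $f$ are the backbone of the argument.

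For the direction $(2) \Rightarrow (1)$, I would argue by contrapositive. Suppose $D$ is not essentially dense, so there is a nonempty open interval $I$ with $\lambda(D \cap I)=0$. Shrinking, I can find two intervals $A, B$ of positive measure with $A - B \subset I$ (take short intervals whose centers differ by the midpoint of $I$). Since $f$ vanishes outside $A - B \subset I$, the Fubini identity gives $\lambda(E \cap (A\times B)) = \int_I \1_D(u) f(u)\,du = 0$, contradicting $(2)$.

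For the direction $(1) \Rightarrow (2)$, fix $A,B$ with $\lambda(A)\lambda(B) > 0$. Since $\int f = \lambda(A)\lambda(B) > 0$, $f$ is not identically zero, and by continuity the set $U \triangleq \{u : f(u) > 0\}$ is a nonempty open set. Essential density of $D$ then forces $\lambda(D \cap U) > 0$. To conclude $\int_U \1_D(u) f(u)\,du > 0$, I decompose $U = \bigcup_n U_n$ where $U_n = \{u \in U : f(u) > 1/n\}$; these are open and increase to $U$, so $\lambda(D \cap U_n) > 0$ for some $n$, giving $\int \1_D f \ge (1/n)\,\lambda(D \cap U_n) > 0$.

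The main obstacle is largely bookkeeping: justifying continuity of the convolution $f$ (standard, via translation-continuity of Lebesgue measure on indicator functions in $L^1$), and ensuring that one can pick $A,B$ in the first direction whose difference set $A-B$ lies inside the prescribed open set $I$ where $D$ has null intersection. Both are classical, so the proof is essentially the Fubini rewriting plus these two soft-analytic observations.
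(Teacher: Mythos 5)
The paper does not prove this theorem: it is cited verbatim as Theorem~1 of~\cite{erdos55} and used as a black box to construct the set $E$. Your proposal therefore cannot be the ``same approach as the paper'' in any meaningful sense, since the paper supplies no proof at all; what you have given is a self-contained proof where the paper offers only a citation.

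That said, your argument is correct, and it is the natural one. The Fubini identity
$\lambda(E\cap(A\times B)) = \int_{\rn}\1_D(u)\,f(u)\,du$ with $f(u)=\lambda(B\cap(A-u))$ follows from the change of variables $(x,y)\mapsto(u,v)=(x-y,y)$, and the three properties of $f$ you invoke (continuity as a convolution $\1_A * \1_{-B}$, total mass $\lambda(A)\lambda(B)$, support inside $A-B$) are exactly what the two directions need. For $(2)\Rightarrow(1)$, shrinking to intervals $A,B$ whose difference set sits inside the exceptional open interval $I$ kills the integral since $\{f>0\}\subset A-B\subset I$ and $\lambda(D\cap I)=0$. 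For $(1)\Rightarrow(2)$, the open set $U=\{f>0\}$ is nonempty because $\int f>0$, essential density gives $\lambda(D\cap U)>0$, and the exhaustion $U_n=\{f>1/n\}\uparrow U$ converts this into a strictly positive lower bound $\int\1_D f\geq \tfrac{1}{n}\lambda(D\cap U_n)>0$. One minor point worth flagging: the continuity-of-convolution step and the identity $\int f = \lambda(A)\lambda(B)$ are cleanest when $A,B$ have finite measure; if the theorem is read as allowing infinite-measure $A,B$, you should first pass to finite-measure subsets $A'\subset A$, $B'\subset B$ of positive measure and note that $\lambda(E\cap A'\times B')>0$ implies $\lambda(E\cap A\times B)>0$. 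In the paper's application $A,B\subset[0,1]$, so this is harmless, but it is worth stating to match the generality of the theorem as written.
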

    From this we have that $E \triangleq \left\{(x,y): x-y \in D \right\}\cap[0,1]^2$ and $E^C = \left\{(x,y): x-y \in D^C \right\}\cap [0,1]^2$ (we let $E$ live in $[0,1]^2$) have that property that for non null sets $A,B \subset [0,1]$, $\lambda(E \cap A\times B) >0$, $\lambda(E^C \cap A\times B) >0$. Note that $\lambda(E \cap A\times B) + \lambda(E^C \cap A\times B) = \lambda(A\times B)$ and thus $\lambda(E\cap A\times B) < \lambda(A\times B)$ so we have constructed $E$.

    We mention that the $E$ we have constructed contains a non-null rectangle when rotated by $45$ degrees. Thusly rotating $p$ to give $\tp$ allows us to find $f,g$ and $w>0$ such that $\tp - wfg$ is a.e. nonnegative. In \cite{erdos55} the authors discuss the existence of sets $E$ where, for all non null $A,B$, we have that $\lambda(f(E) \cap A\times B)>0$, for all $f$ in certain classes of transforms. These results hint towards research directions of finding transforms so that our data is better approximated by nice NNTF model. 

    \bibliography{bibfile}
    \bibliographystyle{plain}

\end{document}